\numberwithin{paragraph}{section}
\numberwithin{equation}{section}
\newtheorem{satz}{Theorem}[section]
\newtheorem{Thm}[satz]{Theorem}
\newtheorem{lem}[satz]{Lemma}
\newtheorem{prop}[satz]{Proposition}
\newtheorem{Prop}[satz]{Proposition}
\newtheorem{kor}[satz]{Corollary}
\theoremstyle{definition}
\newtheorem{defn}[satz]{Definition}
\newtheorem{bem}[satz]{Remark}
\newtheorem{theorem}{Theorem}[]
\newtheorem{thm}[theorem]{Theorem}
\newtheorem{koro}[theorem]{Corollary}
\newcommand{\N}{\mathbb{N}}
\newcommand{\Z}{\mathbb{Z}}
\newcommand{\Q}{\mathbb{Q}}
\newcommand{\R}{\mathbb{R}}
\newcommand{\G}{\mathbb{G}}
\newcommand{\vedge}{\land}
\newcommand{\Xan}{X^{\an}}
	\DeclareMathOperator{\an}{an}
	\DeclareMathOperator{\trop}{trop}
	\DeclareMathOperator{\relint}{relint}
	\DeclareMathOperator{\Spec}{Spec}
	\DeclareMathOperator{\Trop}{Trop}
	\DeclareMathOperator{\supp}{supp}
	\DeclareMathOperator{\id}{id}
	\DeclareMathOperator{\val}{val}
\DeclareMathOperator{\AS}{\mathcal{A}}
\DeclareMathOperator{\HS}{\mathcal{H}}
\DeclareMathOperator{\LS}{\mathcal{L}}
\DeclareMathOperator{\OS}{\mathcal{O}}
\DeclareMathOperator{\US}{\mathcal{U}}
\DeclareMathOperator{\XS}{\mathcal{X}}
\DeclareMathOperator{\YS}{\mathcal{Y}}
\DeclareMathOperator{\PB}{\mathbb{P}}
\def\quotient#1#2{\raise0.75ex\hbox{$\,#1$}\big/\lower0.75ex\hbox{$#2\,$}}
\title[Comparison of two notions of subharmonicity on non-archimedean curves]{Comparison of two notions of subharmonicity on non-archimedean curves}
\author[V.~Wanner]{Veronika Wanner}
\address{V. Wanner, Mathematik, Universit{\"a}t 
Regensburg, 93040 Regensburg, Germany}
\email{veronika.wanner@mathematik.uni-regensburg.de}
\thanks{The author was supported by the collaborative research center SFB 1085 'Higher Invariants' funded by the Deutsche Forschungsgemeinschaft.
 }
\begin{document}
\begin{abstract}
We show that a continuous function on the analytification of a smooth proper algebraic curve over a non-archimedean field is subharmonic in the sense of Thuillier if and only if it is psh, i.e.~subharmonic in the sense of Chambert-Loir and Ducros. 
This equivalence implies that the property psh for continuous functions is stable under pullback with respect to morphisms of curves.
Furthermore, we prove an analogue of the monotone regularization theorem on the analytification of $\mathbb{P}^{1}$ and Mumford curves using this equivalence.

\bigskip

\noindent
MSC: Primary 32P05; Secondary 14G22, 14T05, 3205, 32U40
\bigskip

\noindent
Keywords: Subharmonic functions, Superforms, Berkovich spaces, Tropical geometry
\end{abstract}
\maketitle 
\tableofcontents

\section{Introduction}

Potential theory studies subharmonic functions and is a very old area of mathematics, which originates in the $18$th century.
This theory, and so in particular the theory of subharmonic functions, can be extended to non-archimedean analytic geometry. This is for example done by Baker and  Rumely in \cite{BR} for the non-archimedean projective line and by Thuillier in \cite{Th} for general curves. 
One is interested to develop this theory for non-archimedean spaces also in higher dimensions, i.e.~to develop a pluripotential theory analogue to the classical one.
 Ideas and concepts from pluripotential theory have been already introduced into the theory of non-archimedean analytic spaces by several authors as Zhang \cite{Zhang}, Boucksom, Favre, and Jonsson \cite{BFJ12}, Chambert-Loir and Ducros \cite{CLD}, and Gubler and K\"unnemann \cite{GK1, GK2}.
For example, Chambert-Loir and Ducros  defined in \cite{CLD} plurisubharmonic functions with the help of their real-valued differential forms and currents on Berkovich spaces.
Plurisubharmonicity is more precisely characterized by positivity of a special current corresponding to the given function.
 Their definition of plurisubharmonicity is analogous to the one in classical complex analysis.
As Thuillier's notion in the one dimensional case, their notion is locally analytic and works without any hypotheses on the characteristic.
Furthermore, they  introduced the Monge-Amp\`ere measure for plurisubharmonic functions that are locally approximable by smooth plurisubharmonic functions. 
This is a partial analogue of the complex Bedford-Taylor theory.
One would desire an analogue of this whole theory, and also a monotone regularization theorem in this setting would be worthwhile.
Moreover, we do not know if the notion of plurisubharmonicity is stable under pullback.

The subharmonic functions defined by Thuillier satisfy all required properties (cf. \cite[\S 3.2]{Th}).
In this present work,  we will show that for a continuous function on the analytification of a smooth proper algebraic curve his notion coincides with the one by Chambert-Loir and Ducros.

Let $K$ be an algebraically closed,  complete, non-archimedean, non-trivially valued field and $X$ be a smooth algebraic curve over $K$ with Berkovich analytification $\Xan$.
In Section 2, we summarize Thuillier's theory of subharmonic functions on open subsets of the analytification $\Xan$.
As in the complex potential theory, we use harmonic functions to define subharmonic functions. 
The definition of harmonic functions is related to skeletons of strictly affinoid domains. 
For every strictly affinoid domain $Y$ and every strictly semistable formal model $\YS$ of $Y$, which always exists by the semistable reduction theorem, one obtains a finite graph $S(\YS)$ in $Y$ with  a retraction map $\tau_{\YS}\colon Y\to S(\YS)$. 
Thuillier has introduced in \cite{Th} harmonic functions as continuous functions such that for every strictly affinoid domain $Y$ and every strictly semistable formal model $\YS$ of $Y$, we have  $f|_Y=F\circ \tau_{\YS}$ for a piecewise affine function $F$ on $S(\YS)$ such that the sum of outgoing slopes is zero everywhere in  $S(\YS)\backslash \partial Y$ for the Berkovich boundary $\partial Y$ of $Y$.
An upper semi-continuous function $f\colon W\to [-\infty,\infty)$ on an open subset $W$ of $\Xan$ is then called \emph{subharmonic} if and only if for every strictly affinoid domain $Y$ in $W$ and every harmonic function $h$ on $Y$ with $f\leq h$ on $\partial Y$, we have $f\leq h$ on $Y$.
In \cite{Th}, there is also a notion of smooth functions which we call \emph{lisse} (to distinct them from those defined by Chambert-Loir and Ducros). 
For every lisse function $f$, Thuillier defined a measure $dd^c f$ on $W$ that is positive if and only if $f$ is subharmonic. 
He showed that all the properties of subharmonic functions one would expect from the complex potential theory are satisfied. 
In particular, he verified the stability under pullback (see Proposition \ref{Prop Pullback}) and proved a version of the monotone regularization theorem with lisse subharmonic functions (see Proposition \ref{Prop Net}). 

In Section 3, we define plurisubharmonic functions in the sense of Chambert-Loir and Ducros on the analytification of an arbitrary algebraic variety $X$ of dimension $n$ over $K$. 
In the following, we just say psh.
To do this, we will first recall the sheaf of smooth differential forms $\AS_{X}^{p,q}$ on $\Xan$ with the differential operators $d'$ and $d''$. 
For an open subset $W$ of $\Xan$, we use the notation $\AS_{X,c}^{p,q}(W)$ for the sections of $\AS_{X}^{p,q}(W)$ with compact support in $W$. 
Every form $\omega\in \AS_{X,c}^{n,n}(W)$ defines a unique signed Radon measure with compact support on $W$, which we denote by $\mu_{\omega}$ (see Proposition \ref{Prop Radon}) and  write $\int_W f\wedge \omega:=\int_W f~d \mu_\omega$.
Hence, every continuous function $f\colon W\to \R$ on an open subset $W$ of $\Xan$ leads to a current on $\AS_{X,c}^{n-1,n-1}(W)$ given by
$$d'd''[f](\omega):=\int_Wf\wedge d'd''\omega.$$ 
In \cite{CLD}, a function $f$ is called \emph{psh} if and only if the induced current $d'd''[f]$ is positive.
Furthermore, we recall the definition of the Monge-Amp\`ere measure corresponding to a function that is locally approximable, i.e.~it is locally the difference of uniform limits of smooth psh functions. 
If $X$ is a curve and the Monge-Amp\`ere measure of a function $f$ is positive, then  $f$ is psh.
Note that if $X$ is proper and smooth, model functions on $\Xan$ are lisse, locally approximable and the  Monge-Amp\`ere measure $\text{MA}(f)$ coincides with Thuillier's measure $dd^cf$, which follows from results of Chambert-Loir--Ducros and Katz--Rabinoff -- Zureick-Brown.
A definition of model functions can be found in Definition \ref{Def model}.
These functions and these properties are key ingredients for the proofs in Section 4. 

In Section 4, we show the coincidence of the notions of subharmonic and psh for continuous functions on the analytification of algebraic curves.
We start by showing  in Proposition \ref{Thm lisse} that a lisse subharmonic function is psh.
Since the psh functions form a sheaf (see Proposition \ref{Prop psh sheaf}), it remains to show the assertion for an open neighborhood of every point. 
The complicated case is a point of type II.
 The strategy is to find an open neighborhood such that the lisse function $f$ can be written as an $\R$-linear combination of model functions. 
 One can deduce that the Monge-Amp\`ere measure of the restriction of $f$ coincides with the measure $dd^cf$ defined by Thuillier since it does for every model function, and the claim follows.

 Due to the analogue of Thuillier's monotone regularization theorem (see Proposition \ref{Prop Net}), we get the following theorem.
 \begin{thm}
 	Let $W$ be an open subset of $\Xan$ and $f\colon W\to [-\infty,\infty)$ be a subharmonic function, then $f$ is psh.
 \end{thm}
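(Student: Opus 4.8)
The plan is to bootstrap from the lisse case, already settled in Proposition \ref{Thm lisse}, to an arbitrary subharmonic function by means of the monotone regularization theorem. Since being psh is a local property (Proposition \ref{Prop psh sheaf}), I would first reduce to a small open subset of $\Xan$, which I continue to call $W$, and on each connected component assume $f\not\equiv-\infty$ (on a component where $f\equiv-\infty$ there is nothing to check). Local integrability of subharmonic functions then guarantees that the current $d'd''[f]$ is well defined, so that the statement ``$f$ is psh'' is meaningful as positivity of this current.

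On such a $W$ the crucial input is Proposition \ref{Prop Net}: it furnishes a decreasing net $(f_i)_i$ of lisse subharmonic functions converging pointwise to $f$. By Proposition \ref{Thm lisse} each $f_i$ is psh, i.e.\ the current $d'd''[f_i]$ is positive. It therefore remains to verify that positivity of the currents is preserved under this decreasing limit, which is the whole content of the passage from lisse to general subharmonic functions.

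To this end I would test against an arbitrary positive form $\omega\in\AS_{X,c}^{n-1,n-1}(W)$ and establish $d'd''[f](\omega)=\int_W f\wedge d'd''\omega\ge 0$. Writing $\mu_{d'd''\omega}=\mu^{+}-\mu^{-}$ for the Jordan decomposition of the signed Radon measure attached to $d'd''\omega$, the $f_i$ decrease to $f$ and are dominated on the compact support of $\omega$ by the continuous (hence bounded) regularizer $f_{i_0}$ for a fixed index $i_0$. Applying the monotone convergence theorem separately to the positive measures $\mu^{+}$ and $\mu^{-}$ then gives
$$
d'd''[f](\omega)=\int_W f\,d\mu^{+}-\int_W f\,d\mu^{-}=\lim_i\Big(\int_W f_i\,d\mu^{+}-\int_W f_i\,d\mu^{-}\Big)=\lim_i d'd''[f_i](\omega)\ge 0,
$$
since every term of the net is nonnegative by psh-ness of $f_i$. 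Hence $d'd''[f]$ is positive and $f$ is psh.

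The main obstacle I anticipate is the passage to the limit for a net rather than a sequence: monotone convergence is classically stated for monotone sequences, so I would either extract a cofinal sequence from the directed index set of Proposition \ref{Prop Net}, using that the measures $\mu^{\pm}$ are Radon and the $f_i$ decrease, or verify directly the decreasing-net version of monotone convergence against positive Radon measures. A secondary point needing care is the domination and integrability justifying the split along the Jordan decomposition; this is precisely where local integrability of $f$ and continuity of the lisse approximants $f_i$ enter, and it is also what rules out the degenerate value $\int_W f\,d\mu^{\pm}=-\infty$.
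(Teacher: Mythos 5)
Your proof follows essentially the same route as the paper's: reduce to a relatively compact open subset using the sheaf property (Proposition \ref{Prop psh sheaf}), invoke local integrability of subharmonic functions (Proposition \ref{Prop lokal int}), regularize by a decreasing net of lisse subharmonic functions (Proposition \ref{Prop Net}), apply Proposition \ref{Thm lisse} to each member of the net, and pass to the limit against compactly supported positive Radon measures. The one technical divergence is how the testing object is split into positive pieces: you take the Jordan decomposition $\mu^{+}-\mu^{-}$ of the signed Radon measure $\mu_{d'd''\omega}$, whereas the paper writes $d'd''g=\eta\omega^{+}-\eta\omega^{-}$ with $\omega^{\pm}$ smooth $(1,1)$-forms whose associated measures are positive, using \cite[Lemme 5.3.3]{CLD} together with a smooth cutoff $\eta$ from \cite[Corollaire 3.3.4]{CLD}. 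Both decompositions deliver exactly what the limit argument needs (positive Radon measures with compact support), and yours is arguably leaner, since it needs no structural facts about smooth forms.

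One caution about the step you correctly single out as the crux: of your two proposed remedies for the net-versus-sequence issue, only the second one works. A directed set need not contain a countable cofinal subset, and even if you extract an increasing sequence $(i_k)$ along which $\int f_{i_k}\,d\mu^{\pm}$ tends to $\inf_i\int f_i\,d\mu^{\pm}$, the pointwise limit of $(f_{i_k})$ is merely some function $\geq f$, so the sequential monotone convergence theorem identifies the limit of the integrals with the integral of that function, not with $\int f\,d\mu^{\pm}$. The decreasing-net statement itself is nevertheless true for continuous (more generally, upper semicontinuous) $f_i$ against a positive Radon measure $\mu$ with compact support $K$: given a continuous $h\geq f$ and $\delta>0$, the open sets $\{f_i<h+\delta\}$ increase along the net and cover $K$, so by compactness and directedness a single index already satisfies $f_i<h+\delta$ on $K$; combined with the fact that $\int f\,d\mu$ is the infimum of $\int h\,d\mu$ over continuous $h\geq f$, this gives the claim. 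This is precisely what the paper invokes as \cite[Lemma 2.23]{BFJ15}, so your argument closes up exactly where the paper's does.
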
 

We continue in Section 4 by proving the other implication for continuous functions, i.e.~the following theorem.
 \begin{thm}
 Let $W$ be an open subset of $\Xan$ and $f\colon W\to \R$ be a continuous function. If $f$ is psh, then $f$ is subharmonic.
 \end{thm}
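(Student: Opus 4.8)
The plan is to verify Thuillier's defining comparison property of subharmonicity directly, reducing it to a maximum principle for continuous psh functions which is in turn extracted from the positivity of the current $d'd''[f]$. Since psh functions form a sheaf (Proposition \ref{Prop psh sheaf}) and subharmonicity is a local condition, it suffices to prove the statement in a strictly affinoid neighbourhood of each point; by the semistable reduction theorem I may fix a strictly affinoid domain $Y\subseteq W$ with a strictly semistable formal model $\YS$, skeleton $\Gamma:=S(\YS)$ and retraction $\tau_{\YS}\colon Y\to \Gamma$. Concretely, I must show that for every harmonic function $h$ on $Y$ with $f\le h$ on $\partial Y$ one has $f\le h$ on $Y$.

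Set $g:=f-h$. A harmonic function is lisse with $dd^c h=0$, and by the identification of the current $d'd''[\,\cdot\,]$ with Thuillier's measure $dd^c$ on lisse functions that underlies Proposition \ref{Thm lisse}, the current $d'd''[h]$ equals $dd^c h=0$. Since $d'd''[\,\cdot\,]$ is linear in its argument, $d'd''[g]=d'd''[f]\ge 0$, so $g$ is a continuous psh function on the interior of $Y$ with $g\le 0$ on $\partial Y$. The theorem thus reduces to the maximum principle: a continuous psh function $g$ on $Y$ with $g\le 0$ on $\partial Y$ satisfies $g\le 0$ on $Y$.

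To prove the maximum principle I would translate the analytic positivity $d'd''[g]\ge 0$ into combinatorial positivity on the skeleton. Pairing $d'd''[g]$ with smooth forms $\omega\in\AS_{X,c}^{0,0}$ whose image $d'd''\omega$ is localized near a chosen edge, resp.\ near an interior vertex of $\Gamma$, should show that $g$ is convex along every edge of $\Gamma$ and that at every vertex in $\Gamma\setminus\partial Y$ the sum of the outgoing slopes of $g$ is nonnegative. Moreover $Y\setminus\Gamma$ is a disjoint union of open balls, each retracting to a single root point on $\Gamma$; on such a ball the restriction of $g$ is psh and, the ball having only that one boundary point, its supremum is attained at the root, so $\sup_Y g=\sup_\Gamma g$. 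Finally, a function on the finite metric graph $\Gamma$ that is convex along edges and has nonnegative slope sums at every vertex of $\Gamma\setminus\partial Y$ attains its maximum on $\partial Y$ (if an interior vertex were a maximum, all outgoing slopes would be nonpositive, forcing them to vanish and the maximum to propagate along edges to $\partial Y$). Since $g\le 0$ on $\partial Y$, we get $g\le 0$ on $\Gamma$ and hence on $Y$.

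The main obstacle is the extraction of skeletal positivity from $d'd''[g]\ge 0$, that is, the explicit computation of the pairing of $d'd''[g]$ with lisse test forms localized at edges, vertices and retraction fibers. This is the same delicate point as in the forward direction: the case of a type II point, where the geometry of the residue curve and its tangent directions enters, requires care, whereas the combinatorial maximum principle on $\Gamma$ and the resulting comparison inequality $f\le h$ are then routine.
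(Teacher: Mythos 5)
Your reduction is sound and follows a genuinely different route from the paper's: writing $g=f-h$, noting that $\pm h$ are lisse and subharmonic so that Proposition \ref{Thm lisse} gives $d'd''[h]=0$ on the interior of $Y$, and thereby reducing the theorem to a maximum principle for continuous psh functions on a strictly affinoid domain. (The paper instead argues by contradiction: by Corollary \ref{Kor Greenfct} and Lemma \ref{Lemma Modelfct.} a failure of subharmonicity produces a model function $g$ with $\int f\,dd^c g<0$, and the technical heart of Theorem \ref{folgt subh} is the construction of nonnegative, compactly supported, smooth psh approximations $g_k$ of $g$ with $\lim_k\int f\wedge d'd''g_k=\int f\,d\mathrm{MA}(g)<0$, contradicting positivity of $d'd''[f]$.) However, the two steps you leave open are not routine, and one of them is circular as stated.

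First, the extraction of ``nonnegative sum of outgoing slopes at an interior vertex $x$'' from $d'd''[g]\ge 0$ cannot be done by simply pairing with smooth forms whose $d'd''$ is localized near $x$: in a tropical chart the rays at $\overline{x}=\trop_U(x)$ carry multiplicities $m_i$ and primitive directions $v_i$ satisfying the balancing condition $\sum_i m_iv_i=0$, and for any smooth $\phi=\chi\circ\trop_U$ in the sense of Chambert-Loir--Ducros the atom of $\mu_{d'd''\phi}$ at $\overline{x}$ equals $\langle \nabla\chi(\overline{x}),\sum_i m_iv_i\rangle=0$. So no smooth test form detects the vertex Laplacian directly; the piecewise affine tent functions that would detect it are model functions, hence not smooth, and cannot be paired with the current $d'd''[g]$. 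One can still recover the slope inequality by integrating by parts along the rays (using the edge convexity established first) and letting the support of $\chi$ shrink to $x$, but this argument --- together with the fact that a given chart may contract some edges of $\Gamma$ at $x$ and may introduce rays running into balls towards zeros and poles of the coordinates, so that the inequality obtained concerns a graph different from $\Gamma$ --- is exactly the delicate type II analysis you defer; it is the main content of the theorem, not a remark. Second, your claim that on each component $B$ of $Y\setminus\Gamma$ the supremum of $g$ is attained at the root point is unjustified and circular: it is itself an instance of the maximum principle for psh functions that you are in the process of proving, and nothing in the positivity of $d'd''[g]$ gives it for free. This gap can be repaired without mentioning balls --- type II and III points are dense, $g$ is continuous, and every such point of $Y^{\circ}$ lies on the skeleton of some refinement of $\YS$ --- but then the skeletal estimate of the first gap must be proven for all refinements, so the repair again runs through the unproven step. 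As it stands, the proposal is a plausible program whose hard steps coincide with the difficulties the paper's model-function and smoothing machinery was built to circumvent.
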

 To obtain this result, we first show in Lemma \ref{Lemma Modelfct.} that for every non-subharmonic function $f$, there has to be a specific non-negative model function $g$ with compact support such that $\int f ~\text{MA}(g)=\int f~dd^c g<0$.
  The proof of the theorem is by contradiction, and we assume that such a function $g$ exists.
 We approximate $g$ by non-negative smooth functions $g_k$ with compact support such that  $\int f ~d\text{MA}(g)=\lim_{k\to\infty}\int f\wedge d'd''g_k$.
 Since $f$ is psh, the left hand side of the equation is non-negative, and so $f$ has to be subharmonic. 
The idea of the construction of these functions $g_k$ is based on \cite[Proposition 6.3.2]{CLD}  and we explain the modification in several steps.

In Section 5, we give some applications of this coincidence of the two notions of subharmonicity for continuous functions.
Since we know that subharmonicity in the sense of Thuillier is stable under pullback (see Proposition \ref{Prop Pullback}), we can deduce the following corollary from the theorems above.
  \begin{koro}
  Let $X,X'$ be smooth proper algebraic curves over $K$ and $\varphi\colon W'\to W$ be a morphism of $K$-analytic spaces for open subsets $W\subset \Xan$ and $W'\subset (X')^{\an}$. If a continuous function $f\colon W\to \R$ is psh on $W$, then  $\varphi ^*f$ is psh on $\varphi ^{-1}(W)$. 
  \end{koro}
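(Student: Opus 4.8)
The plan is to deduce the corollary directly from the two main theorems together with the stability of Thuillier's subharmonicity under pullback (Proposition \ref{Prop Pullback}). The chain of reasoning is short, precisely because the hard analytic work has already been done in proving the equivalence of the two notions for continuous functions. First I would observe that $f\colon W\to\R$ is continuous and psh by hypothesis. Invoking the second main theorem (psh $\Rightarrow$ subharmonic for continuous functions on open subsets of $\Xan$), I conclude that $f$ is subharmonic on $W$ in the sense of Thuillier.

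Next I would apply Proposition \ref{Prop Pullback} to the morphism $\varphi\colon W'\to W$: since Thuillier's subharmonicity is stable under pullback along morphisms of $K$-analytic spaces, the function $\varphi^*f=f\circ\varphi$ is subharmonic on $\varphi^{-1}(W)=W'$. Here I should check that the hypotheses of Proposition \ref{Prop Pullback} are met; in particular $\varphi$ is a morphism of the analytifications of smooth proper curves, which is exactly the setting in which that proposition is stated, so no extra regularity condition on $\varphi$ needs to be verified beyond what is already assumed.

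It then remains to pass back from subharmonic to psh. For this I would note that $\varphi^*f$ is again a \emph{continuous} function, being the composition of the continuous $f$ with the continuous map $\varphi$. Continuity lets me apply the first main theorem (subharmonic $\Rightarrow$ psh) once more, now on the open subset $W'\subset (X')^{\an}$, yielding that $\varphi^*f$ is psh on $\varphi^{-1}(W)$, which is the claim. The whole argument is the composite implication
\[
\text{psh on }W \;\Longrightarrow\; \text{subharmonic on }W \;\Longrightarrow\; \text{subharmonic on }W' \;\Longrightarrow\; \text{psh on }W'.
\]

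The only genuine subtlety, and hence the step I would treat most carefully, is the third arrow: one must ensure that the first main theorem applies to $\varphi^*f$ on $W'$, i.e.\ that $\varphi^*f$ really is continuous and that $W'$ is an open subset of the analytification of a smooth proper curve, so that the equivalence of the two notions is available there. Since $\varphi^*f$ is a continuous function on $W'$ and $X'$ is by assumption a smooth proper curve, this is immediate, and there is no remaining obstacle; the corollary is a formal consequence of the two theorems and the pullback stability. I do not expect any delicate estimate or model-function construction to be needed, as all such work is encapsulated in the results already established.
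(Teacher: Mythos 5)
Your proposal is correct and matches the paper's own proof: the paper likewise applies the equivalence of psh and subharmonic for continuous functions (Corollary \ref{Kor Equ}, i.e.\ Theorems \ref{Korollar Th impl. CLD} and \ref{folgt subh}) to pass to Thuillier's notion, invokes Proposition \ref{Prop Pullback}, and then applies the equivalence again to $\varphi^*f$, which is continuous. Your extra care about the continuity of $\varphi^*f$ and the applicability of the theorems on $W'\subset (X')^{\an}$ is exactly the (routine) checking the paper leaves implicit.
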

 Furthermore, if $X$ satisfies some certain properties, e.g.~$X$ is the projective line or a Mumford curve, we obtain a  monotone regularization theorem in the setting of Chambert-Loir and Ducros  using the one in Thuillier's setting  (see Proposition \ref{Prop Net}).
  Note that we need all harmonic functions on $\Xan$  to be smooth in the sense of Chambert-Loir and Ducros to prove the statement.
  \begin{koro} Let $X$ be a smooth proper curve over $K$.
  	If the residue field $\widetilde{K}$ is algebraic over a finite field or $X$ is the projective line or a Mumford curve, then every continuous psh function $f\colon W\to \R$  on an open subset $W$ of $\Xan$ is locally psh-approximable.
  	More precisely, the sequences of smooth psh functions can be chosen monotonically decreasing.
  \end{koro}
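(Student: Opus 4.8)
The plan is to combine the equivalence of the two notions of subharmonicity with Thuillier's regularization theorem, and then to upgrade his lisse approximants to functions that are smooth in the sense of Chambert-Loir and Ducros. Since psh-approximability is a local condition, I would fix a point of $W$ and work on a small relatively compact open neighborhood. As $f$ is continuous and psh, the second theorem stated above shows that $f$ is subharmonic in the sense of Thuillier. Proposition~\ref{Prop Net} then provides, locally, a monotonically decreasing net of lisse subharmonic functions $f_k = F_k \circ \tau_{\YS_k}$ converging to $f$; on the fixed neighborhood this net admits a cofinal monotonically decreasing sequence, which still converges to $f$.

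By Proposition~\ref{Thm lisse}, each lisse subharmonic $f_k$ is psh, so it remains to verify that every such $f_k$ is in addition smooth in the sense of Chambert-Loir and Ducros. This is the heart of the matter and the step where the hypotheses on $X$ enter. A smooth CLD function is locally of the form $\varphi \circ (\log|g_1|, \dots, \log|g_r|)$ for rational functions $g_i$ and a smooth $\varphi$; hence $f_k = F_k \circ \tau_{\YS_k}$ is CLD-smooth as soon as the affine coordinates along the edges of the skeleton $S(\YS_k)$ are realized, up to scaling, by functions $\log|g|$ with $g$ rational -- equivalently, as soon as all harmonic functions on $\Xan$ are CLD-smooth. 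Indeed, once the edge coordinates are model functions in the sense of Definition~\ref{Def model}, the smooth function $F_k$ of these coordinates is CLD-smooth by the very definition of smooth forms.

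Thus everything reduces to showing that harmonic functions are smooth under each hypothesis, and this reduction is exactly the content flagged in the remark preceding the statement. For $X = \mathbb{P}^1$ the skeleta are explicit and every edge coordinate is given by $\log|T-a|$ for a suitable coordinate, so the claim is immediate; for a Mumford curve one uses the Schottky uniformization and its theta functions to produce the required rational functions. The remaining case, $\widetilde{K}$ algebraic over a finite field, is the main obstacle: here one invokes that $\Pic^0$ of the reduction is torsion, so that for any balanced integral slope datum on a skeleton some positive multiple of the associated divisor is principal; scaling back exhibits the corresponding harmonic function as a model function, hence as CLD-smooth. Granting this, each $f_k$ is a smooth psh function, the sequence $(f_k)$ decreases to $f$, and $f$ is locally psh-approximable by a monotonically decreasing sequence of smooth psh functions, as claimed.
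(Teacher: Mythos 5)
Your opening moves match the paper's: continuity plus psh gives subharmonicity by Theorem \ref{folgt subh}, Thuillier's regularization (Proposition \ref{Prop Net}) gives a decreasing net of lisse subharmonic functions near a fixed point, and one extracts a decreasing sequence (here you should also invoke Dini's theorem on a compact strictly affinoid neighborhood, since local psh-approximability requires \emph{uniform} convergence, not just pointwise). The genuine gap is your central claim that each lisse subharmonic $f_k=F_k\circ\tau_{\YS_k}$ is itself CLD-smooth once the edge coordinates of the skeleton are realized by model functions, equivalently once harmonic functions are smooth. This is false whenever $dd^cf_k\neq 0$, i.e.\ whenever $f_k$ is not harmonic --- which is the typical case for the approximants of a non-harmonic $f$. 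The function $F_k$ is only \emph{piecewise} affine, so $F_k$ composed with tropical coordinates is a piecewise linear (model-type) function, and no clever choice of chart can repair this: if $f_k$ were smooth, then by Stokes $d'd''[f_k]=[d'd''f_k]$ would be the Radon measure of a smooth $(1,1)$-form (Proposition \ref{Prop Radon}), which in any tropical chart is given by integrating smooth densities along the edges of the weighted tropical curve and therefore has no atoms; but for a lisse function $d'd''[f_k]=[dd^cf_k]$ near type II points (Proposition \ref{Lemma currents agree}), and $dd^cf_k$ is a nonzero purely atomic measure. Hence a function that is simultaneously lisse and smooth must be harmonic. The hypotheses on $X$ (via Thuillier's Th\'eor\`eme 2.3.21) buy exactly the smoothness of \emph{harmonic} functions, and nothing more; your argument needs them to do something they cannot do.

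This is precisely why the paper interposes Lemma \ref{globally psh-approx} between the two steps you have: assuming $\HS_X\subset C^\infty$, every lisse function on a strictly affinoid domain $Y$ that is subharmonic on $Y\backslash\partial Y$ is there the uniform limit of a \emph{decreasing sequence of smooth psh functions}. Its proof is a genuine construction, not a formal reduction: near each point $x$ of $\supp(dd^cF_k)$ one picks a comparison function $G_x$, harmonic near $x$, with $G_x(x)=F_k(x)$ and $G_x<F_k$ away from $x$, and replaces $f_k$ by the regularized maximum $m_{\varepsilon/2}(G_x\circ\tau_{\YS_k}+\varepsilon,\,f_k)$; away from the kinks $f_k$ is harmonic, hence smooth by hypothesis, and the smooth-maximum of smooth psh functions is smooth psh, so the modified function is smooth and psh, and letting $\varepsilon\downarrow 0$ along a suitable sequence gives a monotone approximation. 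The corollary then follows by a diagonal argument over the two decreasing sequences (the $f_k$'s approximating $f$, and for each $f_k$ its smooth psh approximants). Some smoothing device of this kind is unavoidable; it cannot be bypassed by asserting smoothness of the lisse approximants themselves.
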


\term{In this paper, let $K$ be an algebraically closed field endowed with a complete, 
non-archimedean, non-trivial absolute value $|~ |$.
We write $\val(K^\times):=-\log|K^\times|$ and $R:=\{x\in K\mid |x|\leq 1\}$ with maximal ideal $\mathfrak{m}\subset R$ and residue field $\widetilde{K}:=R/\mathfrak{m}$.
By suitably normalizing the absolute value $|~|$, we may assume that $\Z$, and so $\Q$, is contained in $\val(K^\times)$.
A variety over $K$ is an irreducible separated reduced scheme of finite type over $K$ and a curve is a $1$-dimensional variety over $K$.}

\ack{The author would like to thank Walter Gubler for very carefully reading drafts of this work and for the helpful discussions.
The author is also grateful to Philipp Jell for providing a simplification of the proof of  Proposition \ref{Prop lokal int} and further useful comments.  Finally, the author would like to thank the referee for their very
precise report and helpful suggestions.}

\section{Preliminaries on Thuillier's notion of subharmonic functions}
Let $X$ be a smooth algebraic curve over $K$ and $\Xan$ be its analytification.
In this section, we define subharmonic functions in the sense of Thuillier on  $\Xan$ and give some properties of these functions. 
Subharmonic functions are defined with the help of harmonic functions as in the complex case. 
We therefore recall the definition  of harmonic functions on strictly affinoid domains  from \cite[\S 2.3]{Th}.
For this definition, one needs the skeleton $S(\YS)$ corresponding to a strictly semistable formal model $\YS$ of a strictly affinoid domain $Y$, which has the structure of a finite graph. 

Furthermore, we study lisse (subharmonic) functions, which are crucial for the proofs in Section 4.
These functions are Thuillier's smooth functions, but note that they are totally different from those smooth functions introduced by Chambert-Loir and Ducros, which we define in Section 3. 
Hence, we use the term lisse instead of smooth here.

\begin{defn}
	
	A \emph{semistable $R$-curve} is an integral admissible formal $R$-curve $\YS$ whose special fiber has only ordinary double points as singularities.
	A semistable $R$-curve is called \emph{strictly semistable} if in addition the irreducible components of the special fiber are smooth.
	Let $Y$ be a \emph{strictly analytic domain} in $\Xan$, i.e.~a subset that has a locally finite covering by strictly affinoid domains, then we say that $\YS$ is a \emph{(strictly) semistable formal model} of $Y$  if $\YS$ is a (strictly) semistable proper formal $R$-curve with generic fiber $\YS_\eta$ isomorphic to $Y$.
	
\end{defn}
\begin{defn}
	Let $\YS$ be a strictly semistable proper formal $R$-curve, then there is a corresponding closed subset $S(\YS)$ of the generic fiber $\YS_\eta$ consisting of points of type II and III (see \cite[\S 1.4.4]{BerkovichSpectral} for the classification of points) that has the structure of a finite metric graph and has a canonical retraction map $\tau_{\YS}\colon \YS_\eta \to S(\YS)$ (cf. \cite[Th\'eor\`eme 2.2.10]{Th} or \cite[Chapter 3 \& 5]{BPR2}).
We call $S(\YS)$ the \emph{skeleton} of $\YS$ and the canonical metric is called the \emph{skeletal metric}.   
	Note that the irreducible components of the special fiber of $\YS$ correspond to vertices, which are all of type II, and
	the points in the intersection of two irreducible components correspond to the edges joining the two vertices.
	For every edge $e$ of $S(\YS)$, we have an isometry $\alpha_e\colon [a,b]\to e$ for a closed real interval $[a,b]$.
	A \emph{piecewise affine function} on $S(\YS)$ is a continuous function $F\colon S(\YS)\to \R$ such that  $F|_e\circ \alpha_e$ is piecewise affine for every edge $e$ of $S(\YS)$. 
We can define the \emph{outgoing slope} of a piecewise affine function $F$ on $S(\YS)$ at a point $x\in S(\YS)$ along a tangent direction $v_e$ at $x$ corresponding to an adjacent edge $e$ as $$d_{v_e}F(x):=\lim_{\varepsilon\to 0}(F|_e\circ \alpha_e)^\prime(\alpha_e^{-1}(x)+\varepsilon).$$
  
One obtains a finite measure  on $\Xan$ by  $$dd^c F:=\sum_{x\in S(\YS)}^{}(\sum_{v_e}d_{v_e} F(x))\delta_x,$$
where $e$ is running over all edges in $S(\YS)$ at $x$.
Note that $\sum_{v_e}d_{v_e} F(x)=0$ if $F$ is affine in $x$, and hence the sum is finite. 
Let $S_0$ be a finite subset of $S(\YS)$. We say that a  piecewise affine function $F$ on $S(\YS)$ is \emph{harmonic} on $S(\YS)\backslash S_0$, and write $F\in H(S(\YS), S_0)$, if  $dd^c F(x)=0$ for all $x\in S(\YS)\backslash S_0$.
\end{defn}
\begin{defn}\begin{enumerate}
		\item For a strictly semistable proper formal $R$-curve $\YS$, we define $$H(\YS):=\tau^*_{\YS}(H(S(\YS), \partial \YS_\eta)),$$
		where $\partial \YS_\eta$ is the Berkovich boundary (see \cite[\S 2.1.2]{Th}) of the generic fiber  $\YS_\eta$. 
\item Let $Y$ be a strictly affinoid domain in $\Xan$. Then there exists a strictly semistable $R$-curve $\YS$  and an isomorphism $\varphi\colon Y \to \YS_\eta$  (see \cite{BL2} and \cite[Th\'eor\`eme 2.3.8]{Th}), and we define the \emph{harmonic functions} 	on $Y$  as 
$$H(Y):=\varphi^*(H(\YS)).$$
Note that the definition is independent of $\varphi$ and $\YS$ \cite[Proposition 2.3.3]{Th}.\end{enumerate}
\end{defn}

\begin{defn}
	Let $W$ be an open subset of $\Xan$. Then $f\colon W \to [-\infty,\infty)$ is called \emph{subharmonic} if $f$ is upper semi-continuous,  $f\not\equiv-\infty$ on every connected component of $W$ and for every strictly $K$-affinoid domain $Y$ in $W$ and every harmonic function $h$ on $Y$,  we have 
	$$(f|_{\partial Y}\leq h|_{\partial Y})\Rightarrow (f|_Y\leq h).$$
\end{defn}

We recall some important properties of subharmonic functions from \cite[\S 3]{Th}.
\begin{Prop}
The subharmonic functions form a sheaf on $\Xan$.
\end{Prop}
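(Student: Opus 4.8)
The plan is to observe that, since subharmonic functions are honest functions (sections of the sheaf of $[-\infty,\infty)$-valued functions on $\Xan$), the separation axiom is automatic, and the only substantive content of the statement is that \emph{subharmonicity is a local property}: a function is subharmonic on $W$ if and only if it is subharmonic in a neighborhood of each point of $W$. I would therefore verify the two halves separately, namely that the three defining conditions are stable under restriction to smaller opens, and that they glue.

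First, for restriction, let $V\subset W$ be open and $f$ subharmonic on $W$. Upper semicontinuity is inherited immediately. For the comparison condition, any strictly $K$-affinoid domain $Y\subset V$ is a fortiori a strictly $K$-affinoid domain in $W$, and the harmonic functions on $Y$ do not depend on the ambient open set; hence the implication $(f|_{\partial Y}\le h|_{\partial Y})\Rightarrow(f|_Y\le h)$ transfers verbatim. The only point needing care is the requirement $f\not\equiv-\infty$ on every connected component of $V$: here I would invoke the identity principle for subharmonic functions (a subharmonic function that is $-\infty$ on a nonempty open set is $-\infty$ on the whole connected component, cf.\ \cite[\S 3]{Th}), so that $f$ cannot be identically $-\infty$ on a component of $V$ without being so on the larger component of $W$ containing it.

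Second, for gluing, let $W=\bigcup_i W_i$ and let $f\colon W\to[-\infty,\infty)$ restrict to a subharmonic function on each $W_i$. Upper semicontinuity is local and hence holds on $W$; the condition $f\not\equiv-\infty$ on each component $C$ of $W$ follows because $C$ meets some $W_i$, and $f|_{W_i}$ being subharmonic forbids $f$ from being $-\infty$ on the open set $C\cap W_i$. The crux is the comparison condition on an arbitrary strictly $K$-affinoid domain $Y\subset W$, which may be far larger than any single $W_i$. Given a harmonic $h$ on $Y$ with $f\le h$ on $\partial Y$, I set $u:=f-h$; since $h$ is harmonic and $f$ is subharmonic in a neighborhood of each point of $Y$ (via the cover together with the restriction step above), the function $u$ is locally subharmonic on $Y$ and upper semicontinuous with $u\le 0$ on $\partial Y$. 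I would then conclude $u\le 0$ on $Y$, that is $f\le h$ on $Y$, by appealing to the maximum principle for subharmonic functions on a strictly affinoid domain.

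I expect this maximum principle to be the step that carries all the weight: it is precisely what allows the globally formulated comparison condition on the possibly large domain $Y$ to be deduced from the merely local subharmonicity supplied by the cover. I would use Thuillier's version of the principle, namely that an upper semicontinuous subharmonic function on a strictly affinoid domain attains its maximum on the Berkovich boundary $\partial Y$ (cf.\ \cite[\S 3.2]{Th}), whose proof rests on the description of harmonic and subharmonic functions through the retraction $\tau_{\YS}$ onto a skeleton $S(\YS)$ and the sign of the outgoing-slope sum defining $dd^c$ at interior points. Once the maximum principle is available, both the stability under restriction and the gluing reduce to the bookkeeping above, and the presheaf of subharmonic functions is a sheaf.
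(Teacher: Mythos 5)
The paper itself gives no argument for this proposition: its ``proof'' is the citation \cite[Corollaire 3.1.13]{Th}. Your proposal therefore has to be measured against the argument underlying that citation, and in outline you do follow the standard route: separation is trivial, restriction is straightforward (your appeal to the identity principle for the $f\not\equiv-\infty$ condition on components of $V$ is the right fix, and in the gluing direction you do not even need it, since $C\cap W_i$ is automatically a union of connected components of $W_i$), and everything is made to hinge on a maximum principle that converts local subharmonicity into the global comparison property on a possibly large affinoid $Y$.

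The genuine gap is a circularity at exactly that hinge. The principle you invoke --- an upper semicontinuous \emph{subharmonic} function on a strictly affinoid domain attains its maximum on $\partial Y$ --- has the global comparison property as its hypothesis, but you apply it to $u=f-h$, which at that stage is only known to be subharmonic \emph{in a neighborhood of each point} of $Y\setminus\partial Y$. The implication ``locally subharmonic $\Rightarrow$ subharmonic'' is precisely what is being proved, so as written the argument assumes its conclusion at the decisive step. What is needed is a maximum principle whose hypotheses are themselves local, and its proof is not the one you indicate: the description through $\tau_{\YS}$ and outgoing slopes of a piecewise affine $F$ with $f=F\circ\tau_{\YS}$ applies only to lisse functions, not to a general upper semicontinuous $u$. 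The non-circular argument runs instead through harmonic measures: for a type II or III point $x$ and a small connected strictly affinoid neighborhood $Y_x$ contained in one chart $W_i$, solve the Dirichlet problem on the finite graph $S(\YS_x)$ to obtain the sub-mean value inequality $u(x)\le\int_{\partial Y_x}u\,dd^cg^{Y_x}_x$; then use that this measure is a probability measure charging \emph{every} Shilov boundary point of the connected $Y_x$ (because $g^{Y_x}_x$ is strictly positive on the interior and vanishes on $\partial Y_x$), together with the density of type II/III points, upper semicontinuity, and the fact that the topological boundary of any connected component of $Y\setminus\partial Y$ lies in $\partial Y$, to show that the set where $u$ attains an interior maximum $M>0$ is open and closed in its component and hence forces $u\ge M$ somewhere on $\partial Y$, a contradiction. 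This propagation argument is the actual content behind \cite[Corollaire 3.1.13]{Th}, and it is the piece missing from your proposal; once it is supplied, the rest of your bookkeeping goes through.
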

\begin{proof}
See  \cite[Corollaire 3.1.13]{Th}.
\end{proof}

\begin{prop}\label{Prop Pullback}
	Let $X,X'$ be smooth algebraic curves, $\varphi\colon{(X')}^{\an}\to\Xan$ be a morphism of $K$-analytic spaces and $W$ be an open subset of $\Xan$. 
	If $f\colon W\to \R$ is a subharmonic function on $W$, then  $\varphi ^*f$ is a subharmonic function on $\varphi ^{-1}(W)$. 
\end{prop}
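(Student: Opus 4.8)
The plan is to prove stability under pullback by reducing the global statement to a purely local, combinatorial statement about skeletons and piecewise affine functions. Since subharmonicity is a sheaf condition (the proposition just quoted) and is tested on strictly affinoid domains, I would first reduce to checking the defining inequality on small pieces. Concretely, let $Y'$ be a strictly $K$-affinoid domain in $\varphi^{-1}(W)$ and let $h'$ be a harmonic function on $Y'$ with $\varphi^*f \leq h'$ on $\partial Y'$; I must show $\varphi^*f \leq h'$ on all of $Y'$. The strategy is to push $Y'$ forward, or rather to choose compatible semistable models on both sides so that $\varphi$ restricts to a map of skeletons, and then to transport the harmonicity condition through this map.

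The key geometric input is that a finite morphism $\varphi$ of curves can be made compatible with strictly semistable formal models: by the semistable reduction theorem one can choose strictly semistable models $\YS'$ of (a strictly affinoid neighborhood inside) $Y'$ and $\YS$ of its image, such that $\varphi$ maps the skeleton $S(\YS')$ into $S(\YS)$ and is, on each edge, a dilation by the local ramification degree. This is the standard simultaneous semistable reduction / balancing picture for morphisms of Berkovich curves. First I would invoke this to arrange that $\tau_{\YS} \circ \varphi = \varphi \circ \tau_{\YS'}$ on the relevant domains, so that $\varphi^* (F \circ \tau_{\YS}) = (F \circ \varphi) \circ \tau_{\YS'}$ for piecewise affine $F$. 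Then the harmonic function $h$ on the target pulls back to a function on the source that is again of the form $F' \circ \tau_{\YS'}$ for a piecewise affine $F'$ on $S(\YS')$.

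The central computation is then the behavior of outgoing slopes under this edge-wise dilation. Along a tangent direction $v_{e'}$ at a point $x' \in S(\YS')$ mapping to a direction $v_e$ at $x = \varphi(x')$, the slope satisfies $d_{v_{e'}}(F \circ \varphi)(x') = m_{e'} \cdot d_{v_e}F(\varphi(x'))$, where $m_{e'}$ is the (positive integer) local expansion factor of $\varphi$ along $e'$. Summing over the directions at $x'$ and using that the directions above a fixed $v_e$ contribute with positive weights, harmonicity of $F$ away from $\partial \YS_\eta$ forces the sum of outgoing slopes of $F \circ \varphi$ to vanish away from $\partial \YS'_\eta$. Thus $\varphi^* h$ is harmonic on $Y'$ in Thuillier's sense. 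Because upper semi-continuity and the non-degeneracy condition $f \not\equiv -\infty$ are clearly preserved under pullback by a nonconstant (hence surjective on components) morphism, the defining implication for subharmonicity of $\varphi^* f$ follows directly from that for $f$: from $\varphi^* f \leq \varphi^* h$ on $\partial Y'$ we get $f \leq h$ on $\partial(\varphi(Y'))$, hence $f \leq h$ there, hence $\varphi^* f \leq \varphi^* h = h'$ on $Y'$.

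The main obstacle, and the step deserving the most care, is the simultaneous semistable reduction that makes $\varphi$ a map of skeletons with controlled slope multipliers, together with the bookkeeping of the Berkovich boundaries $\partial Y$ and $\partial Y'$ under $\varphi$. In particular one must ensure that points of $S(\YS')$ lying over $\partial \YS_\eta$, or mapping to branch/boundary points, are handled so that the harmonicity condition is only ever required on the correct open part $S(\YS')\setminus \partial\YS'_\eta$; this is exactly where the factor $m_{e'}$ and the absence of extra slope contributions must be verified, and where a naive argument could fail if the models are not chosen compatibly. Everything else — sheaf reduction, preservation of upper semicontinuity, and the final implication — is formal once the slope identity $d_{v_{e'}}(F\circ\varphi) = m_{e'}\, d_{v_e}F$ is in hand.
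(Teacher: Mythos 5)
The paper offers no argument of its own for this proposition --- it simply cites \cite[Proposition 3.1.14]{Th} --- so your proposal must stand on its own, and it has a genuine gap at its logical core. Thuillier's definition requires you to test $\varphi^*f$ against \emph{every} harmonic function $h'$ on \emph{every} strictly affinoid domain $Y'\subset \varphi^{-1}(W)$. Your argument only ever treats those $h'$ that arise as pullbacks $\varphi^*h$ of harmonic functions $h$ on the image: you pull $h$ back, verify harmonicity of $\varphi^*h$ by a slope computation, and then conclude by writing ``$\varphi^*h=h'$''. But an arbitrary harmonic function on $Y'$ is in general \emph{not} a pullback --- already for $\varphi\colon \PB^{1,\an}\to\PB^{1,\an}$, $z\mapsto z^2$, the pullbacks are exactly the harmonic functions invariant under the deck involution $z\mapsto -z$. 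So what you prove is strictly weaker than subharmonicity of $\varphi^*f$. A correct argument has to transport the \emph{test objects} in the opposite direction: for instance, use the sub-mean-value characterization of subharmonicity via the functions $g^{Y'}_{x'}$ (as in Corollary \ref{Kor Greenfct}) together with the compatibility of harmonic measure with the trace map $\varphi_*$ for finite morphisms, i.e.~push the Green's functions / harmonic measures forward rather than pulling harmonic functions back; your descent step ``$\varphi^*f\leq \varphi^*h$ on $\partial Y'$ implies $f\leq h$ on $\partial(\varphi(Y'))$'' also silently needs that $\varphi(Y')$ is strictly affinoid and $\partial(\varphi(Y'))\subset \varphi(\partial Y')$.

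The central slope computation is also under-justified in a way that matters. Grouping the directions $v_{e'}$ at $x'$ by their images $v_e$ at $x$, the sum of outgoing slopes of $F\circ\varphi$ at $x'$ is $\sum_{v_e}c_{v_e}\,d_{v_e}F(x)$ with $c_{v_e}:=\sum_{e'\mapsto v_e}m_{e'}$, and mere \emph{positivity} of the weights $c_{v_e}$ (which is all you invoke) does not force this to vanish: slopes $(1,1,-2)$ sum to zero, yet with weights $(1,1,2)$ the weighted sum is $-2\neq 0$. What actually saves the computation is the balancing / local degree formula for finite morphisms of Berkovich curves: $c_{v_e}=\deg_{x'}(\varphi)$ is the \emph{same} for every direction $v_e$ at $x$, so the weighted sum equals $\deg_{x'}(\varphi)\sum_{v_e}d_{v_e}F(x)=0$. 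This is precisely the nontrivial input (harmonicity of the map of skeletons) that must be cited or proved, and it is also where the compatible choice of models earns its keep. Two smaller omissions: the statement allows an arbitrary morphism of $K$-analytic spaces, so you must dispose of constant $\varphi$ (trivial, since constants are harmonic) and justify the local reduction of a non-constant analytic morphism to the finite case before any of the semistable-reduction machinery applies.
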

\begin{proof}
See  \cite[Proposition 3.1.14]{Th}.
\end{proof}
In the following, we define the class of lisse functions using the characterization in \cite[Proposition 3.2.4]{Th}. 
Note that we can replace compact strictly analytic domains by strictly affinoid domains looking at the proof of \cite[Proposition 3.2.4]{Th} and using \cite[Proposition 3.2.2]{Th}.

\begin{defn}\label{Def lisse} Let $W$ be an open subset of $\Xan$. A  function $f\colon W\to \R$ is called \emph{lisse} if
	for every strictly affinoid domain $Y\subset W$, there exists a strictly semistable formal $R$-curve $\YS$ such that $Y=\YS_\eta$ and $f|_Y=F\circ \tau_{\YS}$ for a piecewise affine function $F$ on $S(\YS)$.
\end{defn}
\begin{prop}\label{Prop Net} Let $W$ be an open subset of $\Xan$ and $f$ a subharmonic function on $W$. 
	For every relatively compact open subset  $W'$ of $W$,  
	there is a decreasing net $\langle f_j\rangle$ of lisse subharmonic functions converging pointwise to $f$ on $W'$.
\end{prop}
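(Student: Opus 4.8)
The plan is to follow Thuillier's strategy and build the net by indexing over strictly semistable formal models, refining the associated skeleta. First I would reduce to a local statement: since $W'$ is relatively compact in $W$, I cover $\overline{W'}$ by finitely many strictly affinoid domains contained in $W$, and it suffices to construct the net on each such domain and recombine. So fix a strictly affinoid domain $Y$ with $W'\subset\inte(Y)$ and $Y\subset W$, a strictly semistable formal model $\YS$ with $\YS_\eta=Y$, skeleton $S=S(\YS)$ and retraction $\tau=\tau_{\YS}$.

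The core construction attaches to each model a lisse dominating function. The key analytic input is a maximum principle: every connected component of $Y\setminus S$ is an open disk whose relative boundary consists of a single point $\tau(x)\in S$, so comparing $f$ with the harmonic (constant) function $f(\tau(x))$ on that disk yields $f\le (f|_S)\circ\tau$ on $Y$, with equality on $S$. Moreover $f|_S$ is subharmonic on the graph $S$, since the transversal contributions to $\ddc f$ along $S$ are non-negative and hence the graph Laplacian of $f|_S$ is a positive measure. I would then approximate the upper semicontinuous, graph-subharmonic function $f|_S$ from above by a decreasing family of piecewise affine functions $F$ on $S$ that stay subharmonic on the graph, i.e.~have non-negative sum of outgoing slopes at each point of $\inte(Y)\cap S$, and converge pointwise to $f|_S$: on each edge this is the chordal piecewise-linear interpolation at finer and finer partitions, which dominates a convex function and decreases to it, while at the vertices the slope sums converge to those of $f|_S$ and remain non-negative. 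Since $F\circ\tau$ is locally constant off $S$, its measure $\ddc(F\circ\tau)$ equals the graph Laplacian $\ddc_S F\ge 0$, so each $F\circ\tau$ is lisse and subharmonic, and $F\circ\tau\ge (f|_S)\circ\tau\ge f$.

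Finally I would organize these approximants into a decreasing net, indexed by pairs $(\YS,F)$ with $F$ as above and ordered by reverse pointwise domination of $F\circ\tau_{\YS}$ on $W'$. Directedness follows because any two strictly semistable models admit a common refinement $\YS'$ with $S\subset S'$ and $\tau_{\YS}=\tau_{\YS}\circ\tau_{\YS'}$; combined with $f\le (f|_S)\circ\tau_{\YS}$ this gives $(f|_{S'})\circ\tau_{\YS'}\le (f|_S)\circ\tau_{\YS}$, so refining the model decreases the dominating function, and refining the edge partitions decreases it further. For pointwise convergence at $x\in W'$, the retractions $\tau_{\YS}(x)$ converge to $x$ as the models refine; upper semicontinuity of $f$ yields $\limsup f(\tau_{\YS}(x))\le f(x)$, while $F\circ\tau_{\YS}(x)\ge f(x)$ and the edge-wise interpolation controls $F(\tau_{\YS}(x))-f(\tau_{\YS}(x))$, forcing the net to decrease to $f(x)$. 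The main obstacle is the graph-level regularization of the merely upper semicontinuous function $f|_S$: one must simultaneously keep the approximants piecewise affine, subharmonic on the graph, and monotone under refinement, and check that non-negativity of the outgoing-slope sums survives both the interpolation and the passage to a common refinement of two models. This is precisely where Thuillier's analysis of $\ddc$ on skeleta is required (cf. \cite[\S 3]{Th}).
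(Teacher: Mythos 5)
The paper gives no argument of its own here: its proof is the citation \cite[Th\'eor\`eme 3.4.2]{Th}, and your sketch is essentially a reconstruction of Thuillier's strategy (restrict to the skeleton, approximate there by piecewise affine graph-subharmonic functions, pull back along the retraction, organize over the directed set of models). Two of your steps, however, have genuine gaps. The first is the opening reduction: ``cover $\overline{W'}$ by finitely many strictly affinoid domains contained in $W$ \ldots and recombine.'' There is no working recombination. The approximants built over different overlapping affinoids $Y_i$ need not be comparable on overlaps, and the natural patching operations destroy the required properties: the pointwise maximum, taken at each point over those $i$ with the point in $\inte(Y_i)$, fails to be upper semicontinuous (hence fails to be subharmonic) at points where one of the $Y_i$ drops out of the index set, while the pointwise minimum of subharmonic functions is not subharmonic at all. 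The correct reduction is to work with a \emph{single} compact strictly analytic neighborhood $Y$ of $\overline{W'}$ inside $W$ (for curves one can take $Y$ strictly affinoid when $W\neq \Xan$; if $W=\Xan$, subharmonic functions on the proper curve are constant and the statement is trivial). The paper's Definition 2.1 allows strictly semistable formal models of such $Y$, so the whole construction should be run once, for models of that one $Y$, rather than locally and then glued.

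The second gap is in your justification that $f|_{S(\YS)}$ is subharmonic on the graph. As stated it is both miswired and circular. Miswired: the transversal contributions of $dd^c f$ at a point of the skeleton --- the outgoing slopes into the attached open disks --- are non-\emph{positive}, not non-negative; indeed this is exactly what your own maximum-principle comparison $f\le f(\tau(x))$ on each disk yields, and it is this non-positivity that lets one conclude that the graph Laplacian dominates the full (non-negative) Laplacian. Circular: for a merely upper semicontinuous subharmonic $f$, neither the measure $dd^c f$ nor its decomposition into slope contributions is available at this stage (in the paper, $dd^c$ is defined only for lisse functions), so invoking ``the transversal contributions to $dd^c f$'' presupposes the very machinery of Thuillier that the proposition is quoted from. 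The non-circular route is to derive edge-convexity and the vertex inequalities of $f|_{S(\YS)}$ directly from the comparison definition of subharmonicity, by testing $f$ against harmonic functions on the annuli $\tau_{\YS}^{-1}(e)$ (pullbacks of affine functions on the edge $e$) and on stars of vertices; one also needs continuity of $f|_{S(\YS)}$ (\cite[Proposition 3.4.6]{Th}, which the paper uses elsewhere) before the chordal interpolants can be shown to converge uniformly. With these two repairs --- a single affinoid (or compact strictly analytic) neighborhood instead of a patched cover, and a comparison-based proof of graph-subharmonicity --- your construction, together with the bookkeeping you indicate (including the break points of previously chosen interpolants in refined partitions for directedness, and $\tau_{\YS}(x)\to x$ over the net of models for pointwise convergence), does go through and recovers Thuillier's theorem.
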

\begin{proof}
	See \cite[Th\'eor\`eme 3.4.2]{Th}.
\end{proof}
\begin{defn}\label{Bem dd^c} 
 Let $W$ be an open subset of $\Xan$ and  $f\colon W\to \R$ be a lisse function on $W$, then we denote by $dd^cf$ the unique real measure on $W$ that 
\begin{enumerate}
	\item has discrete support contained in the set of  type II and III points of $\Xan$, and
	\item coincides with $dd^c F$ on $Y$ whenever $f=F\circ \tau_{\YS}$ on a strictly affinoid domain $Y=\YS_\eta$ in $W$.
\end{enumerate}
This measure exists and is unique by \cite[Th\'eor\`eme 3.2.10]{Th}.
\end{defn}
With the help of this operator $dd^c$, one can define harmonic functions on open subsets in the Berkovich topology (see \cite[Corollaire 3.2.11]{Th}). 
\begin{defn}\label{Def harmonisch Berk}
A function $f\colon W\to \R$ on an open subset $W$ of $\Xan$ is called \emph{harmonic} if $f$ is lisse and $dd^cf=0$.
Harmonic functions  form a sheaf on $\Xan$, which we denote by $\HS_X$.
\end{defn}
This measure leads also to a further way to decide whether a lisse function is subharmonic or not.
\begin{prop} \label{Prop lisse subh}
A lisse function $f\colon W\to \R$ is subharmonic if and only if $dd^cf\geq 0$.
\end{prop}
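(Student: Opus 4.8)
The plan is to prove that a lisse function $f$ on $W$ is subharmonic if and only if $dd^cf\geq 0$, using the characterizations and constructions already recalled in this section. The strategy rests on the fact that lisse functions are, locally on strictly affinoid domains, pullbacks $F\circ\tau_{\YS}$ of piecewise affine functions on skeletons, and that $dd^cf$ is precisely the measure $dd^cF$ assembled from outgoing slopes. Since subharmonicity is a local property defined via the sheaf of harmonic functions, and since $dd^cf$ is a local measure supported on type II and III points, I would reduce both conditions to statements about the piecewise affine function $F$ on a single skeleton $S(\YS)$ and compare them there.

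First I would fix a strictly affinoid domain $Y=\YS_\eta$ in $W$ with $f|_Y=F\circ\tau_{\YS}$, and observe that by the retraction $\tau_{\YS}$ the comparison of $f$ with harmonic functions on subdomains of $Y$ reduces to a comparison of $F$ with harmonic (in the graph sense, i.e.\ piecewise affine with vanishing slope sums) functions on subgraphs of $S(\YS)$. This uses that harmonic functions on $Y$ are exactly $\tau_{\YS}^*H(S(\YS),\partial\YS_\eta)$ together with the defining implication $(f|_{\partial Y}\leq h|_{\partial Y})\Rightarrow(f|_Y\leq h)$. The key point is that because $f$ factors through $\tau_{\YS}$, verifying subharmonicity against \emph{all} strictly affinoid subdomains is equivalent to verifying the analogous comparison principle for $F$ on $S(\YS)$ against all harmonic functions on the graph.

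Next I would establish the purely combinatorial equivalence on the metric graph: for a piecewise affine function $F$ on $S(\YS)$, the comparison principle (domination by a graph-harmonic function on the boundary forces domination in the interior) holds if and only if $dd^cF\geq 0$, i.e.\ $\sum_{v_e}d_{v_e}F(x)\geq 0$ at every interior point $x$. This is the one-dimensional maximum-principle characterization of convexity/subharmonicity on graphs. The forward direction would follow by testing against the graph-harmonic function agreeing with $F$ on the boundary of a small segment around a point where the slope sum were negative, producing a violation of the comparison principle; the converse follows from the standard fact that a function with nonnegative Laplacian satisfies the maximum principle, so it cannot exceed a harmonic function dominating it on the boundary.

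The main obstacle I anticipate is bookkeeping at the boundary $\partial Y$ and at the vertices (type II points), ensuring that the local slope-sum condition defining $dd^cf\geq 0$ matches exactly the comparison against all harmonic $h$, including the correct treatment of the Berkovich boundary where no constraint is imposed. One must be careful that shrinking to small affinoid subdomains isolates a single interior point of the skeleton without interference from $\partial Y$, and that the measure $dd^cf$ from Definition \ref{Bem dd^c}, being independent of the chosen model, glues consistently so that the pointwise slope condition is well defined globally on $W$. Once the local equivalence on each $S(\YS)$ is secured and the independence of the model (already guaranteed by the cited results of Thuillier) is invoked, the global statement follows by the sheaf property of subharmonic functions and the locality of $dd^cf$.
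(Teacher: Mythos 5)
The paper does not actually prove this proposition: its ``proof'' is the single line ``See \cite[Proposition 3.4.4]{Th}.'' Your proposal therefore does something the paper does not, namely reconstruct a self-contained argument, and the argument you sketch is the right one (and almost certainly the one behind Thuillier's own proof): use the retraction to push both $f$ and the harmonic competitors down to the skeleton, where subharmonicity becomes a comparison principle for the piecewise affine function $F$ against graph-harmonic functions, and prove that this comparison principle is equivalent to nonnegativity of all interior slope sums --- by the maximum principle for functions with $dd^c\geq 0$ in one direction, and by testing against the solution of the Dirichlet problem on a small star around a point with negative slope sum in the other. What the citation buys the paper is that all gluing and model-independence issues are settled wholesale in Thuillier's framework; what your route buys is transparency about why the statement is really a one-dimensional convexity/maximum-principle fact.

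One step of your reduction needs to be restated to be correct. You cannot check subharmonicity only against subdomains that are preimages of subgraphs of a \emph{single fixed} skeleton $S(\YS)$: an arbitrary strictly affinoid subdomain $Y''\subset W$ (for instance a small disc whose Shilov point lies in the interior of an edge) is not of this form, and its skeleton need not be a subgraph of $S(\YS)$. The repair is exactly what Definition \ref{Def lisse} supplies: for \emph{each} strictly affinoid $Y''\subset W$ separately there is a strictly semistable model $\YS''$ with $f|_{Y''}=F''\circ\tau_{\YS''}$; by the model-independence of harmonic functions and of the measure $dd^cf$ (Definition \ref{Bem dd^c}), every harmonic $h$ on $Y''$ is the pullback of a graph-harmonic function on $S(\YS'')$ and $dd^cF''=dd^cf\geq 0$ away from $\partial Y''$, so the maximum principle runs on $S(\YS'')$ component by component (each component of $S(\YS'')$ meets $\partial Y''$, since $\Xan$ is connected and no component of $Y''$ can be boundaryless). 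For the converse (``subharmonic $\Rightarrow dd^cf\geq 0$'') you only need the star-shaped subdomains, and the fact you are implicitly using there --- that $\tau_{\YS}^{-1}(\Gamma)$ of a closed star $\Gamma$ is a strictly affinoid domain with Berkovich boundary $\partial\Gamma$ --- is precisely the fact the paper itself invokes in the proof of Proposition \ref{Lemma currents agree}. With these adjustments your outline is a correct proof.
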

\begin{proof}
	See \cite[Proposition 3.4.4]{Th}.
\end{proof}

\begin{prop}
	Let $Y$ be a connected strictly affinoid domain in $\Xan$ and $x\in Y\backslash\partial Y$ be a point of type II or III. 
	Then there exists a unique lisse function $g^Y_x$ on $\Xan$ such that \begin{enumerate}
		\item $g^Y_x$ is strictly positive on $Y\backslash\partial Y$ and equal to zero on $\partial Y \cup \Xan\backslash Y$;
		\item  $dd^c g^Y_x$ is supported on $\partial Y \cup \{x\}$ with  $dd^c g^Y_x=-\delta_x$ in a neighborhood of $x$;
		\item for every  harmonic function $h$ on $Y$ we have
		\begin{align}\label{Formel h(x)}
		h(x)=\int _{\partial Y} h ~dd^c g^Y_x.
		\end{align}
\end{enumerate}
\end{prop}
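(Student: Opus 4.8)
The plan is to reduce the entire statement to a Dirichlet problem for the combinatorial Laplacian on a finite metric graph and then transport the solution to $\Xan$ through a retraction. First I would fix a strictly semistable formal model $\YS$ of $Y$ with $\YS_\eta\cong Y$ for which $x$ lies on the skeleton $S(\YS)$; since $x$ is of type II or III this is achieved by refining an arbitrary semistable model so that $x$ becomes a vertex (a type II point is already a vertex, a type III point is an interior edge point that one turns into a vertex by subdivision). Writing $\Gamma:=S(\YS)$, this is a connected finite metric graph containing the finite boundary set $\partial:=\partial\YS_\eta=\partial Y$ among its vertices, and $x\in\Gamma\setminus\partial$.

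On $\Gamma$ I would solve the discrete Dirichlet problem: find a piecewise affine $G$ with $G|_\partial=0$ and $dd^cG=-\delta_x$ on $\Gamma\setminus\partial$, equivalently $G\in H(S(\YS),\partial\cup\{x\})$ normalised so that the total outgoing slope at $x$ equals $-1$. This is a finite linear system in the vertex values of $G$ whose coefficient matrix is the Laplacian of $\Gamma$ with Dirichlet conditions on $\partial$; it is invertible because, by the maximum principle on the connected graph $\Gamma$, the only function harmonic on $\Gamma\setminus\partial$ and vanishing on $\partial$ is $0$. Hence $G$ exists and is unique. I then set $g^Y_x:=G\circ\tau_{\YS}$ on $Y$ and $g^Y_x:=0$ on $\Xan\setminus Y$; since $G|_\partial=0$ these agree on $\partial Y$ and glue to a continuous lisse function on $\Xan$, and by Definition \ref{Bem dd^c} its measure is $dd^cg^Y_x=dd^cG$, which is supported on $\partial Y\cup\{x\}$ and equals $-\delta_x$ near $x$. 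This yields (2).

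For (1), the function $G$ is superharmonic on $\Gamma\setminus\partial$ (its measure is $-\delta_x\le 0$ there) and vanishes on $\partial$, so the strong minimum principle for superharmonic functions on the connected graph forces $G>0$ on $\Gamma\setminus\partial$; transporting by $\tau_{\YS}$ gives the positivity on $Y\setminus\partial Y$ and the vanishing on $\partial Y\cup(\Xan\setminus Y)$. For (3), let $h$ be harmonic on $Y$ and write $h=H\circ\tau_{\YS}$ with $H\in H(S(\YS),\partial)$, so that $dd^cH$ is carried by $\partial$. The crucial ingredient is the discrete Green's identity (integration by parts, i.e.\ symmetry of the graph Laplacian)
\[
\int_\Gamma H\,dd^cG=\int_\Gamma G\,dd^cH .
\]
The right-hand side vanishes because $dd^cH$ lives on $\partial$ while $G|_\partial=0$, and the left-hand side equals $-H(x)+\int_\partial H\,dd^cG$ because $dd^cG=-\delta_x$ on $\Gamma\setminus\partial$. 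Rearranging gives $h(x)=H(x)=\int_{\partial Y}h\,dd^cg^Y_x$, which is (3).

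Finally, if $g_1,g_2$ both satisfy (1) and (2), then $g_1-g_2$ is lisse with $dd^c(g_1-g_2)=0$ and vanishes on $\partial Y$ and off $Y$; its restriction to $\Gamma$ solves the homogeneous Dirichlet problem and is therefore $0$ by the uniqueness above, so $g_1=g_2$. I expect the main obstacle to be the careful verification that the zero-extension $G\circ\tau_{\YS}$ is genuinely lisse on all of $\Xan$ with $dd^c$-support exactly $\partial Y\cup\{x\}$ (controlling the transition across $\partial Y$), together with setting up the discrete Green's identity in precisely the normalisation needed. The positivity assertion also deserves care, since $\tau_{\YS}$ contracts branches of $Y$ onto $\partial Y$, and one must check that this is compatible with strict positivity on $Y\setminus\partial Y$ as stated.
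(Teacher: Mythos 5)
Your overall strategy --- solve a Dirichlet problem on the skeleton and push the solution through the retraction --- is the natural one; note that the paper itself offers no argument for this proposition (its proof is just the citation \cite[Proposition 3.3.7 \& Corollaire 3.3.9]{Th}), so the comparison is with the standard argument, which is what you attempt. Your existence and uniqueness of $G$ via the Dirichlet Laplacian and the maximum principle, the symmetry (Green) identity giving (iii), and the uniqueness of $g^Y_x$ are essentially correct. Two repairable points: a type III point can never be made a vertex of $S(\YS)$ by modifying the formal model (vertices correspond to irreducible components of the special fibre and are of type II), but this is harmless, since you only need $x\in S(\YS)$ and $dd^c$ of a piecewise affine function is defined at every point of the skeleton; and the lisse-ness of the zero extension, which you flag, is a genuine verification but a standard one --- the paper performs exactly this kind of extension by zero across $\partial Y$ in the proof of Proposition~\ref{Lemma currents agree}, using a $G$-covering of $\Xan$ by $Y$ and $\overline{\Xan\setminus Y}$. (Your uniqueness step also tacitly assumes that any competitor factors through $\tau_{\YS}$; this follows from harmonicity on the complementary balls, as discussed next.)

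The genuine gap is property (i), and your closing caveat is a real obstruction, not a detail left to check. At a boundary point $p\in\partial Y$ (a type II point) there are infinitely many tangent directions pointing into $Y$, and only finitely many of them lie in $S(\YS)$; each remaining direction gives a connected component of $Y\setminus S(\YS)$ --- an open ball contained in $Y\setminus\partial Y$ --- which $\tau_{\YS}$ collapses onto $p$. On every such ball your function $G\circ\tau_{\YS}$ is identically $G(p)=0$, so it is \emph{not} strictly positive on $Y\setminus\partial Y$; this already happens for $Y$ a closed disc in $\Xan$ with $X=\mathbb{P}^1_K$. Moreover, no construction can do better: property (ii) forces $dd^cg^Y_x=0$ on each such ball $B$, so $g^Y_x$ is harmonic on $B$, and a harmonic function on an open ball that is continuous up to its unique (topological) boundary point $p$ is constant by the maximum principle; since $g^Y_x(p)=0$, this forces $g^Y_x\equiv 0$ on $B$. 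Hence, with $\partial Y$ the Berkovich (Shilov) boundary, conditions (i) and (ii) are incompatible as literally stated. What is true, and what your construction actually proves, is that $g^Y_x\geq 0$ with strict positivity exactly on $\tau_{\YS}^{-1}(S(\YS)\setminus\partial Y)$; this weaker form is all that is used later (Corollary~\ref{Kor Greenfct} and Lemma~\ref{Lemma Modelfct.} need only (ii), (iii) and non-negativity). Your write-up should prove this corrected form of (i) rather than assert that positivity transports through $\tau_{\YS}$.
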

\begin{proof}
	See \cite[Proposition 3.3.7 \& Corollaire 3.3.9]{Th}.
\end{proof}
\begin{kor}\label{Kor Greenfct}
	Let $W$ be an open subset of $\Xan$. 
	A continuous function $f\colon W\to \R$ is subharmonic if and only if for every connected strictly affinoid domain $Y\subset W$ and every point $x\in Y\backslash \partial Y$ of type II or III, we have $$\int_W f~ dd^c g^Y_x\geq 0.$$
\end{kor}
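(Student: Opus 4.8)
The plan is to prove both implications by converting the quantity $\int_W f\, dd^c g^Y_x$ into a comparison between $f$ and the solution of a Dirichlet problem on $Y$. The starting point is a decomposition of the measure $dd^c g^Y_x$. By property (ii) of the preceding proposition it is supported on $\partial Y\cup\{x\}$ and equals $-\delta_x$ in a neighborhood of $x$, so, since $x\notin\partial Y$, it splits as $dd^c g^Y_x=\mu_\partial-\delta_x$, where $\mu_\partial$ is a measure carried by the finite set $\partial Y$. As $g^Y_x$ is positive on $Y\setminus\partial Y$ and vanishes on $\partial Y$, it increases into the interior at every boundary point, so its outgoing slopes there are nonnegative and $\mu_\partial\geq 0$ (it has total mass $1$, as one sees by evaluating property (iii) on the constant harmonic function). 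Integrating the continuous function $f$ against this decomposition yields the identity
\[
\int_W f\, dd^c g^Y_x=\int_{\partial Y} f\, dd^c g^Y_x-f(x),
\]
so the asserted inequality is equivalent to the sub-mean value inequality $f(x)\leq \int_{\partial Y} f\, dd^c g^Y_x$, and the whole proof reduces to establishing this inequality for all admissible $Y$ and $x$.

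For the implication "$\Rightarrow$", assume $f$ is subharmonic and fix a connected strictly affinoid $Y\subset W$ with strictly semistable model $\YS$, $Y=\YS_\eta$, together with an interior point $x$ of type II or III. Solving the Dirichlet problem on the finite metric graph $S(\YS)$ with boundary values $f|_{\partial\YS_\eta}$ produces a piecewise affine function $F\in H(S(\YS),\partial\YS_\eta)$, and $h:=F\circ\tau_{\YS}\in H(Y)$ is a harmonic function with $h=f$ on $\partial Y$. Since $f\leq h$ on $\partial Y$, subharmonicity gives $f\leq h$ on all of $Y$, in particular $f(x)\leq h(x)$. By property (iii) and $h=f$ on $\partial Y$ we have $h(x)=\int_{\partial Y}h\, dd^c g^Y_x=\int_{\partial Y}f\, dd^c g^Y_x$, and combining this with the displayed identity gives $\int_W f\, dd^c g^Y_x=h(x)-f(x)\geq 0$.

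For the converse, first note that upper semicontinuity and $f\not\equiv-\infty$ hold automatically because $f$ is real-valued and continuous, so only the comparison property must be checked. Let $Y\subset W$ be strictly affinoid and $h\in H(Y)$ with $f\leq h$ on $\partial Y$; since the comparison property is checked componentwise, we may assume $Y$ is connected. For every $y\in Y\setminus\partial Y$ of type II or III, the hypothesis together with the displayed identity gives $f(y)\leq \int_{\partial Y}f\, dd^c g^Y_y$. As $\mu_\partial=dd^c g^Y_y|_{\partial Y}$ is a positive measure and $f\leq h$ on $\partial Y$, monotonicity of the integral and property (iii) yield $f(y)\leq\int_{\partial Y}h\, dd^c g^Y_y=h(y)$. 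Hence $f\leq h$ holds on $\partial Y$ and at all interior points of type II or III; since these points are dense in $Y$ and $f,h$ are continuous, we conclude $f\leq h$ on $Y$, so $f$ is subharmonic.

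The two steps I expect to be the most delicate are the solvability of the Dirichlet problem in the forward direction—producing a harmonic $h$ on $Y$ matching $f$ on the finite set $\partial Y$, which rests on the (elementary but nontrivial) unique solvability of the discrete Laplace equation on $S(\YS)$ with prescribed boundary data—and the density argument closing the converse, which requires that the type II and III points of $Y\setminus\partial Y$ be dense in $Y$. Both the measure-theoretic splitting of $dd^c g^Y_x$ and the positivity of the harmonic measure $\mu_\partial$ are routine once the monotonicity of $g^Y_x$ is used, so the core content is genuinely the identity above combined with property (iii).
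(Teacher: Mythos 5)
Your proof is correct and rests on the same core argument as the paper's: both hinge on the identity $\int_W f~dd^c g^Y_x=\int_{\partial Y}f~dd^c g^Y_x - f(x)$, coming from the support and normalization properties of $dd^c g^Y_x$, together with the evaluation $\int_{\partial Y}h~dd^c g^Y_x=h(x)$ for harmonic $h$ and the density of type II and III points combined with continuity of $f$ and $h$. The only divergence is that where the paper invokes Thuillier's Remarque 3.1.10 to reduce immediately to the case $f=h$ on $\partial Y$ for connected $Y$, you unpack that reduction by hand --- solving the Dirichlet problem on $S(\YS)$ explicitly in the forward direction, and in the converse avoiding the reduction altogether by observing that $dd^c g^Y_y|_{\partial Y}$ is a positive measure, so that $f\leq h$ on $\partial Y$ already yields $\int_{\partial Y}f~dd^c g^Y_y\leq h(y)$ --- both of which are sound.
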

\begin{proof}
	Consider a strictly affinoid domain $Y$ in $W$ and a harmonic function $f$ on $Y$ with $f\leq h$ on $\partial Y$. 
	As explained in \cite[Remarque 3.1.10]{Th}, we may assume $f=h$ on $\partial Y$, and without loss of generality $Y$ is connected.
	For every $x\in Y\backslash\partial Y$ of type II or III,   Equation $(\ref{Formel h(x)})$ implies that
	\begin{align*}
	\int_W f~ dd^c g^Y_x& = \int_{\partial Y} f~ dd^c g^Y_x - f(x) = \int_{\partial Y} h~ dd^c g^Y_x - f(x)=h(x)-f(x).
	\end{align*}
	Hence, we have $\int_W f~ dd^c g^Y_x\geq 0$ for every type II or III point  in $Y$ if and only if $h(x)\geq f(x)$ for every type II or III point  in $Y$. 
	Since $f$ and $h$ are continuous and the type II and III points are dense, the last is equivalent to $h-f \geq 0$ on $Y$.
\end{proof}
\section{Preliminaries on subharmonic functions in the sense of Chambert-Loir and Ducros}
In this section, let $X$ be an $n$-dimensional algebraic variety  over $K$.
 We define psh functions on open subsets of $\Xan$ in the sense of Chambert-Loir and Ducros \cite{CLD}.
For the definition of psh functions, we need the sheaf of smooth real-valued differential forms on Berkovich analytic spaces introduced by them. 
We follow \cite{Gubler} for the definition of these forms $\mathcal{A}_X^{p,q}$ on the analytification $\Xan$ of an algebraic variety $X$.
One can also define currents which are linear functionals on $\mathcal{A}_{X,c}^{p,q}(W)$ for an open subset $W$ of $\Xan$ satisfying a continuity property, 
where $\mathcal{A}_{X,c}^{p,q}(W)$ denotes  the sections of $\mathcal{A}_X^{p,q}(W)$ with compact support.
We refer to \cite[\S 4.2]{CLD} or \cite[\S 6]{Gubler} for the precise definition of currents.
Subsequently,  we will give a definition of the Monge-Amp\`ere measure corresponding to locally approximable functions, and give some properties of it. These properties are needed in Section 4 for the proofs of our main theorems.

We start with recalling $(p,q)$-superforms on open subsets of  $\R^r$,  which were originally defined by Lagerberg in \cite{Lagerberg}. 
This theory of superforms leads to superforms on polyhedral complexes developed in \cite{CLD}.
With the help of the theorem of Bieri-Groves one can define real-valued differential forms on $\Xan$.

\begin{defn} \label{defnd}
\begin{enumerate} [itemindent =*, leftmargin=0mm]
\item
For an open subset $U \subset \R^r$ denote by $\AS^{p}(U)$ the space of smooth real differential forms of degree $p$. 
The space of \emph{superforms of bidegree} $(p,q)$ on $U$ is defined as
\begin{align*}
\AS^{p,q}(U)
:= \AS^p(U)\otimes_{C^\infty(U)}\AS^q(U)= \AS^p(U)\otimes_\R \Lambda^q {\R^r}^*= \Lambda^p {\R^r}^* \otimes_\R \AS^q(U).
\end{align*}
\item
There are  \emph{differential operators} 
\begin{align*}d' \colon \AS^{p,q} (U) =\AS^p(U)\otimes_\R \Lambda^q {\R^r}^*\rightarrow \AS^{p+1}(U)\otimes_\R \Lambda^q {\R^r}^*= \AS^{p+1,q}(U)\\
d'' \colon \AS^{p,q} (U) = \Lambda^p {\R^r}^* \otimes_\R \AS^q(U) \rightarrow \Lambda^p {\R^r}^* \otimes_\R \AS^{q+1}(U)= \AS^{p,q+1}(U)
\end{align*}
 that are given by $ D \otimes (-1)^q \id$ and $(-1)^p \id \otimes D$, where $D$ is the usual exterior derivative.

\item 
There is a wedge product  
\begin{align*}
\wedge: \AS^{p,q}(U) \times \AS^{p',q'}(U) &\rightarrow \AS^{p+p',q+q'}(U), \\
(\alpha \otimes \psi, \beta \otimes \nu) &\mapsto (-1)^{p' q} \alpha \vedge \beta \otimes \psi \wedge \nu ,
\end{align*}
that is, up to sign, induced by the usual wedge product.

\item 
We have the following canonical involution $J\colon \AS^{p,q}(U)\to \AS^{q,p}(U)$ given by
\begin{align*}
\alpha =\sum\limits_{I,J}^{}\alpha_{IJ}d'x_I\wedge d''x_J\mapsto (-1)^{pq} \sum\limits_{I,J}^{}\alpha_{IJ}d'x_J\wedge d''x_I,
\end{align*}
where $d'x_I\wedge d''x_J:=(dx_{i_1}\otimes\ldots\otimes dx_{i_p})\otimes_{\R}(dx_{j_1}\otimes\ldots\otimes dx_{j_q})$ for $I=\{i_1,\ldots,i_p\}$ and $J=\{j_1,\ldots,j_q\}$.

\end{enumerate}
For all $p,q$ the functor $U\mapsto \AS^{p,q}(U)$ defines a sheaf on $\R^r$ and we have $\AS^{p,q}=0$ if $\max(p,q)>r$.
\end{defn}

\begin{defn}
\begin{enumerate}
\item A \textit{polyhedron} in $\mathbb{R}^r$ is the intersection of finitely many half-spaces $H_i:=\{w\in \mathbb{R}^r|\langle u_i,w\rangle\leq c_i\}$ with $u_i\in {\mathbb{R}^r}^*$ and $c_i\in \R$. 
\item A \textit{polyhedral complex} $\mathscr{C}$ in $\mathbb{R}^r$ is a finite set of polyhedra in $\mathbb{R}^r$ satisfying the following two properties:
\begin{enumerate}
\item If $\tau$ is a closed face of a polyhedra $\sigma\in \mathscr{C}$, then $\tau\in \mathscr{C}.$
\item  If $\sigma,\tau\in \mathscr{C}$, then $\sigma\cap \tau$ is a closed face of both.
\end{enumerate}
\end{enumerate}\end{defn}

\begin{defn}
Let $\mathscr{C}$ be a polyhedral complex in $\mathbb{R}^r$.
\begin{enumerate}
\item  We say that $\mathscr{C}$  is of \textit{dimension} $d$ if the maximal dimension of its polyhedra is $d$.
 A polyhedral complex $\mathscr{C}$  is called \textit{pure dimensional of dimension} $d$ if every maximal polyhedron in $\mathscr{C}$ has dimension $d$.
\item The \textit{support} $\vert \mathscr{C}\vert $ of $\mathscr{C}$ is the union of all polyhedra in $\mathscr{C}$.
\item For $\sigma\in \mathscr{C}$, we denote by $\relint(\sigma)$ the relative interior of $\sigma$, by $\mathbb{A}_\sigma$  the affine space that is spanned by $\sigma$ and by $\mathbb{L}_\sigma$ the corresponding linear subspace of $\mathbb{R}^r$.
\end{enumerate}
\end{defn}

\begin{defn}
Let  $\mathscr{C}$ be a polyhedral complex and $\Omega$ be an open subset of $\vert \mathscr{C}\vert$. Then a \textit{superform} $\alpha\in \AS^{p,q}(\Omega)$ \textit{of bidegree} $(p,q)$ on $\Omega$ is given by a superform $\alpha'\in \AS^{p,q}(V)$ where $V$ is an open subset of $\mathbb{R}^r$ with $V\cap \vert \mathscr{C}\vert =\Omega$. 
Two forms  $\alpha'\in \AS^{p,q}(V)$ and  $\alpha''\in \AS^{p,q}(W)$ with  $V\cap \vert \mathscr{C}\vert =W\cap \vert \mathscr{C}\vert=\Omega$ define the same form in $ \AS^{p,q}(\Omega)$ if we have for each $\sigma\in \mathscr{C}$
$$\langle\alpha'(x);v_1,\ldots,v_p,w_1,\ldots,w_q\rangle=\langle\alpha''(x);v_1,\ldots,v_p,w_1,\ldots,w_q\rangle$$  for all $x\in \sigma\cap \Omega$, $v_1,\ldots,v_p,w_1,\ldots,w_q\in \mathbb{L}_\sigma$. 
If this is true, we say that the restrictions $\alpha'|_{\sigma}$ and $\alpha''|_{\sigma}$ agree. If $\alpha\in \AS^{p,q}(\Omega)$ is given by $\alpha'\in \AS^{p,q}(V)$, we write $$\alpha'|_{\Omega}=\alpha.$$
\end{defn}
\begin{bem} Let  $F\colon\mathbb{R}^{r'}\to \mathbb{R}^r$ be an affine map.
	 If $\mathscr{C}'$ is a polyhedral complex of $\mathbb{R}^{r'}$ and $\mathscr{C}$ a polyhedral complex of $\mathbb{R}^{r}$ with $F(\vert \mathscr{C}'\vert)\subset \vert \mathscr{C}\vert$, then the pullback $F^*\colon \AS^{p,q}(\vert \mathscr{C}\vert)\to \AS^{p,q}(\vert\mathscr{C'}\vert)$ is well-defined  
and compatible with the differential operators $d'$ and $d''$. 
In particular, we have the operators 
 $d'$ and $d''$ on $\AS^{p,q}(\vert \mathscr{C}\vert)$ given by the restriction of the corresponding operators on $\AS^{p,q}(\mathbb{R}^r)$.
\end{bem}

\begin{defn} We set $\G^r_m:=\Spec K[T_1^{\pm 1},\ldots,T_r^{\pm 1}]$.
	 Recall that its Berkovich analytification $\G^{r,\an}_m$ is the set of all multiplicative seminorms on $K[T_1^{\pm 1},\ldots,T_r^{\pm 1}]$ extending the given absolute value $|~|$ on $K$ endowed with the coarsest topology such that $\G^{r,\an}_m\to \R;~\rho\mapsto \rho(f)$ is continuous for every $f\in K[T_1^{\pm 1},\ldots,T_r^{\pm 1}]$. 
	 We define 
	$$\trop\colon \G^{r,\an}_m\to \R ^r; ~\rho\mapsto (-\log(\rho(T_1)),\ldots,-\log(\rho(T_r))).$$
	Note that $\trop$ is a proper map in the sense of topological spaces.
	
	Let $U$ be an open affine subset of $X$ and let $\varphi\colon U\to \G^r_m$ be a closed embedding, then we set $\varphi_{\trop}:=\trop\circ \varphi ^{\an}$ and $\Trop_{\varphi}(U):=\varphi_{\trop}(U^{\an})$. 	 
\end{defn}	
\begin{Thm}[Bieri-Groves]\label{BG}
For every open affine subset $U$ of $X$ and every closed embedding $\varphi\colon U\to \G^r_m$, the set $\Trop_{\varphi}(U)$ is the support of an $\R$-affine polyhedral complex of pure dimension $n=\dim(X)$. 
\end{Thm}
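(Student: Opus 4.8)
The plan is to identify $\Trop_\varphi(U)$ with a tropical variety and to extract its polyhedral structure from Gr\"obner theory over the valued field $K$. Since $X$ is a variety, the non-empty open affine $U$ is irreducible and reduced, so $\varphi$ realizes $U$ as $\Spec\bigl(K[T_1^{\pm 1},\ldots,T_r^{\pm 1}]/I\bigr)$ for a prime ideal $I$ whose quotient has Krull dimension $n$. A point $\rho\in U^{\an}$ is a multiplicative seminorm extending $|~|$, and writing $\val_\rho:=-\log\rho$ one has $\varphi_{\trop}(\rho)=(\val_\rho(T_1),\ldots,\val_\rho(T_r))$; thus $\Trop_\varphi(U)$ is exactly the set of weight vectors arising as coordinatewise valuations of seminorms vanishing on $I$.

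First I would describe this image intrinsically. For $w\in\R^r$ let $\inn_w(I)$ denote the initial ideal of $I$ formed with respect to the weight $w$ and the valuation on $K$. The fundamental theorem of tropical geometry, available here because $K$ is algebraically closed, complete and nontrivially valued with $\Q\subset\val(K^\times)$, asserts that $w\in\Trop_\varphi(U)$ if and only if $\inn_w(I)$ contains no monomial, and moreover that every such $w$ lifts to an honest seminorm in $U^{\an}$. This reduces the statement to an analysis of the locus $\{w\in\R^r : \inn_w(I)\text{ contains no monomial}\}$.

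Next I would put the polyhedral structure in place. By Gr\"obner theory over a valued field the assignment $w\mapsto\inn_w(I)$ takes only finitely many values, and the regions on which it is constant are the relative interiors of the cells of a polyhedral complex $\Sigma$ subdividing $\R^r$, the Gr\"obner complex. Each cell is cut out by finitely many affine inequalities with real coefficients, so $\Sigma$ is an $\R$-affine polyhedral complex in the sense introduced above. Since $\inn_w(I)$ depends only on the cell containing $w$ in its relative interior, the non-monomial locus is a union of closed cells of $\Sigma$, whence $\Trop_\varphi(U)$ is the support of an $\R$-affine polyhedral subcomplex.

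The main obstacle is pure dimensionality, namely that every maximal cell of this subcomplex has dimension exactly $n$. The bound $\dim\Trop_\varphi(U)\le n$ is comparatively soft: if $w$ lies in the relative interior of a $k$-dimensional cell $\sigma$, then $\inn_w(I)$ is invariant under the subtorus with character lattice $\mathbb{L}_\sigma$, so its vanishing locus is a union of cosets of a $k$-dimensional subtorus, while Gr\"obner degeneration keeps $\dim\inn_w(I)=n$; hence $k\le n$. The reverse inequality together with purity is the genuinely hard input and constitutes the structure theorem of tropical geometry (Bieri-Groves, Einsiedler-Kapranov-Lind, Speyer): using that $I$ is prime and that the initial degenerations of an equidimensional ideal remain equidimensional of the same dimension, one shows that each point of $\Trop_\varphi(U)$ lies in a cell of dimension $n$. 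In the present exposition I would invoke this structure theorem directly rather than reproduce the initial-degeneration argument in full.
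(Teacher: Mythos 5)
Your proposal is mathematically sound, but it takes a genuinely different route from the paper, whose entire proof is the citation ``See \cite[Theorem A]{BG}'' --- Bieri--Groves formulate their theorem directly in terms of the set of real (semi)valuations on the coordinate ring extending the valuation of $K$, which is exactly the paper's $\Trop_{\varphi}(U)$ once one unwinds the definition of points of $U^{\an}$, so no translation step is needed there. You instead follow the modern Gr\"obner-theoretic path (Maclagan--Sturmfels, Einsiedler--Kapranov--Lind): identify the image of $U^{\an}$ with the monomial-free locus $\{w \in \R^r : \inn_w(I) \text{ contains no monomial}\}$ via the fundamental theorem of tropical geometry, get the polyhedral structure from constancy of $w \mapsto \inn_w(I)$ on the cells of the Gr\"obner complex (strictly speaking this requires homogenizing, since Gr\"obner complexes are built from homogeneous ideals), prove the soft bound $\dim \leq n$ by torus invariance of initial degenerations (minor slip: the relevant subtorus is the one whose \emph{cocharacter} lattice spans $\Linear_\sigma$, not its character lattice), and then cite the structure theorem for purity. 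What your route buys is the explicit bridge between the Berkovich-analytic definition of $\Trop_\varphi(U)$ and the algebraic tropical variety of the ideal $I$ --- a bridge that is genuinely needed if one wants to quote the algebraic literature, and which the paper's bare citation leaves implicit. What it costs is that the decisive input, pure $n$-dimensionality, is still obtained by citation: the ``structure theorem'' you invoke \emph{is} the Bieri--Groves theorem in its algebraic formulation, so your argument is a careful reduction to the known result rather than a self-contained proof --- which puts it on the same logical footing as the paper, just with more of the scaffolding made visible.
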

\begin{proof}
	See \cite[Theorem A]{BG}.
\end{proof}
\begin{defn}
Let $U, U'$ be open affine subsets of $X$ and  $\varphi\colon U\to \G^r_m$ and $\varphi'\colon U'\to \G^{r'}_m$  be closed embeddings.
We say that $\varphi'$ \emph{refines} $\varphi$ if $U'\subset U$ and there is an affine homomorphism (i.e.~group homomorphism composed with a multiplicative translation) $\psi\colon \G^{r'}_m\to \G^{r}_m$ of multiplicative tori such that $\varphi=\psi\circ \varphi'$. 
This homomorphism induces an integral affine map $\Trop(\psi)\colon \R^{r'}\to \R^{r}$ such that $\varphi_{\trop} =\Trop(\psi)\circ \varphi'_{\trop}$.

We call an open affine subset $U$ of $X$ \emph{very affine} if $\OS(U)$ is generated as a $K$-algebra by $\OS(U)^\times$.
If $U$ is very affine, then there exists a canonical (up to multiplicative translation) closed embedding $\varphi_U\colon U\to \G^r_m$ which refines all other closed embeddings $\varphi\colon U\to \G^{r'}_m$ (see \cite[4.12]{Gubler}).
For the canonical embedding $\varphi_U$, we use the notations $\trop_U:=(\varphi_U)_{\trop}$ and $\Trop(U):=\trop_U(U^{\an})$.
\end{defn}	 
	\begin{defn}
	Let $W$ be an open subset of $\Xan$. 
	A \emph{tropical chart} $(V,\varphi_U)$ of $W$ consists of the canonical closed embedding $\varphi_U\colon U\to \G^r_m$ of a very affine open subset $U$ of $X$ and an open subset $V$ of $W$ that is of the form $V=\trop_U^{-1}(\Omega)$ for an open subset $\Omega$ of $\Trop(U)$. 
	 
	We say that  $(V',\varphi_{U'})$ is a \emph{tropical subchart} of $(V,\varphi_U)$ if $V'\subset V$ and $U'\subset U$. 
	In this situation, $\varphi_{U'}$ refines 	$\varphi_{U}$.
	\end{defn}
	
\begin{bem}
Theorem \ref{BG} allows us to consider a superform $\alpha\in \AS^{p,q}_{\Trop(U)}(\mathrm{trop}_U(V))$ for a tropical chart $(V,\varphi_U)$ of $X^{\mathrm{an}}$.
 Let  $(V',\varphi_{U'})$  be another tropical chart of $X^{\mathrm{an}}$, then $(V\cap V' ,\varphi_{U\cap U'})$ is a tropical subchart of both by \cite[Proposition 4.16]{Gubler} with $\varphi_{U\cap U'}=\varphi_{U}\times\varphi_{U'}$.
  We get a canonical homomorphism $\psi_{U,U\cap U'}\colon \mathbb{G}_m^{r+r'}\to \mathbb{G}_m^r $ of the underlying tori with $$\varphi_U=\psi_{U,U\cap U'}\circ \varphi_{U\cap U'}$$ on $U\cap U'$ and an associated affine map $\mathrm{Trop}(\psi_{U,U\cap U'})\colon \mathbb{R}^{r+r'}\to \mathbb{R}^r$ such that $$\mathrm{trop}_{U}=\mathrm{Trop}(\psi_{U,U\cap U'})\circ \mathrm{trop}_{U\cap U'}$$ and the tropical variety $\mathrm{Trop}(U\cap U')$ is mapped onto  $\mathrm{Trop}(U)$ (see \cite[5.1]{Gubler}). 
  We define the \emph{restriction} of $\alpha$ to $\mathrm{trop}_{U\cap U'}(V\cap V')$ as 
$$\mathrm{Trop}(\psi_{U,U\cap U'})^*\alpha\in  \AS^{p,q}_{\Trop(U\cap U')}(\mathrm{trop}_{U\cap U'}(V\cap V'))$$ and write $\alpha|_{V\cap V'}$.
\end{bem}
\begin{defn}\label{D forms} Let $X$ be an algebraic variety over $K$ and $W$ be an open subset of $\Xan$. 
An element of $\AS_X^{p,q}(W)$ is given by a family $(V_i,\varphi_{U_i},\alpha_i)_{i\in I}$ such that 
\begin{enumerate}
\item for all $i\in I$ the pair $(V_i,\varphi_{U_i})$ is a tropical chart and $W=\bigcup_{i\in I}V_i$;
\item for all $i\in I$ we have $\alpha_i\in \AS^{p,q}_{\Trop(U_i)}(\mathrm{trop}_{U_i}(V_i))$;
\item for all $i,j\in I$ the restrictions $\alpha_i|_{V_i\cap V_j}= \alpha_j|_{V_i\cap V_j}$ agree;
\end{enumerate}

If $\alpha'$ is another differential form of bidegree $(p,q)$ on $W$ given by a triple $(V'_j,\varphi_{U'_j},\alpha'_j)_{j\in J}$, then we consider $\alpha$ and $\alpha'$ as the same differential form if and only if $$\alpha_i|_{V_i\cap V'_j}=\alpha'_j|_{V_i\cap V'_j}$$ for every $i\in I$ and $j\in J$.

Then  $W\mapsto\AS_X^{p,q}(W)$ defines a sheaf on $\Xan$, which we denote by  $\AS_X^{p,q}$ and we write $\AS_{X,c}^{p,q}(W)$ for the sections with compact support in $W$.
If the space of definition is clear, we often just use the notations $\AS^{p,q}$ and $\AS_c^{p,q}$. 
By Theorem \ref{BG}, we have $\AS_X^{p,q}=0$ if $\max(p,q)>n=\dim(X)$.

The differentials $d'$ and $d''$ and the wedge product carry over. Moreover, for every open subset $W$ of $\Xan$ there is a non-trivial integration map $\int\colon \AS^{n,n}_{c}(W)\to \R$ which is compatible with pullback.

Note that smooth differential forms of bidegree $(0,0)$ are well-defined continuous functions.
\end{defn}	

\begin{defn}\label{Def glatt}
	Let $W$ be an open subset of $\Xan$. 
	A function $f\colon W\to \R$ is called \emph{smooth} if $f\in\mathcal{A}^{0,0}(W)$. 
	We use the notations $C^\infty(W):=\mathcal{A}^{0,0}(W)$ and $C_c^\infty(W):=\mathcal{A}_c^{0,0}(W)$.
\end{defn}
Next to $(0,0)$-forms, we also work a lot with $(n,n)$-forms.
It is important to know the following fact about these forms. 
\begin{prop}\label{Prop Radon}
	Let $W$ be an open subset of $\Xan$ and $\alpha\in \mathcal{A}^{n,n}(W)$.
	 Then there is a unique signed Radon measure $\mu_\alpha$ on $W$ such that $\int_W f~d\mu_\alpha=\int_W f\alpha$ for every $f\in C^\infty_c(W)$. 
	 If $\alpha$ has compact support on $W$, so has $\mu_\alpha$ and $|\mu_\alpha|(W)<\infty$.
\end{prop}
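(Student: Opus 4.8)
The plan is to view the assignment $f\mapsto\int_W f\alpha$ as a linear functional on $C_c^\infty(W)$, to bound it locally by the supremum norm, and then to produce $\mu_\alpha$ by the Riesz--Markov--Kakutani representation theorem. This is legitimate because $\Xan$ is a locally compact Hausdorff space. Concretely, set
$$
L\colon C_c^\infty(W)\to\R,\qquad L(f):=\int_W f\,\alpha.
$$
Since $f\mapsto f\alpha$ maps $C_c^\infty(W)=\AS_c^{0,0}(W)$ into $\AS_c^{n,n}(W)$ and the integration map $\int\colon\AS_c^{n,n}(W)\to\R$ of Definition \ref{D forms} is $\R$-linear, $L$ is a well-defined $\R$-linear functional.

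\textbf{The key local estimate.}
The heart of the argument is to show that for every compact $K\subset W$ there is a constant $C_K$ with $|L(f)|\le C_K\,\|f\|_\infty$ for all $f\in C_c^\infty(W)$ with $\supp f\subset K$. I would cover $K$ by finitely many tropical charts $(V_i,\varphi_{U_i})$ on which $\alpha$ is represented by superforms $\alpha_i\in\AS^{n,n}_{\Trop(U_i)}(\trop_{U_i}(V_i))$, and choose a smooth partition of unity $(\rho_i)$ subordinate to this cover. Writing $f=\sum_i\rho_i f$ (a finite sum on $K$) and using that each $\rho_i f$ is supported in $V_i$, the definition of the integration map gives
$$
\int_W f\,\alpha=\sum_i\int_{\trop_{U_i}(V_i)}(\rho_i f)_i'\,\alpha_i,
$$
where $(\rho_i f)_i'$ is the function on $\Trop(U_i)$ with $(\rho_i f)_i'\circ\trop_{U_i}=\rho_i f$ on $V_i$, and the right-hand integrals are the classical integrals over the pure $n$-dimensional $\R$-affine polyhedral complexes $\Trop(U_i)$ furnished by Theorem \ref{BG}. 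Because $(\rho_i f)_i'(x)$ takes a value of $\rho_i f$ on the fiber $\trop_{U_i}^{-1}(x)$, we have $\|(\rho_i f)_i'\|_\infty\le\|f\|_\infty$; and each $\alpha_i$ is a smooth superform whose coefficients are bounded on the compact set $\trop_{U_i}(\supp\rho_i\cap K)$, so integration of $|\alpha_i|$ over this compact part of $\Trop(U_i)$ is finite. Summing the finitely many contributions yields the bound with $C_K:=\sum_i\int_{\trop_{U_i}(\supp\rho_i\cap K)}|\alpha_i|<\infty$, independent of $f$.

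\textbf{Extension, representation, and uniqueness.}
I would next use that $C^\infty(W)$ is an $\R$-algebra containing the constants and separating the points of $\Xan$: two distinct seminorms differ on some $g\in\OS(U)^\times$ for a very affine $U$, and $-\log|g|=(\text{a coordinate on }\Trop(U))\circ\trop_U$ is smooth and separates them. By a Stone--Weierstrass argument combined with smooth partitions of unity, $C_c^\infty(W)$ is dense in $C_c(W)$ for uniform convergence of functions with supports in a fixed compact set. The local estimate shows $L$ is continuous for this topology, so it extends uniquely to a linear functional on $C_c(W)$ obeying the same local bounds; the Riesz--Markov--Kakutani theorem for signed Radon measures then provides a unique signed Radon measure $\mu_\alpha$ with $L(f)=\int_W f\,d\mu_\alpha$. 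Uniqueness of $\mu_\alpha$ among signed Radon measures agreeing with $L$ merely on $C_c^\infty(W)$ follows from the same density. Finally, if $\supp\alpha=:S$ is compact, then $f\alpha=0$ whenever $\supp f\cap S=\emptyset$, hence $L$ vanishes on such $f$ and $\supp\mu_\alpha\subset S$; applying the estimate with $K$ a compact neighborhood of $S$ gives $|\mu_\alpha|(W)=|\mu_\alpha|(K)\le C_K<\infty$.

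\textbf{Main obstacle.}
I expect the genuine work to lie in the local estimate of the second paragraph together with the density statement: one must translate $\int_W f\alpha$ into honest integrals over the polyhedral complexes $\Trop(U_i)$ so as to control it by $\|f\|_\infty$, and one must know that smooth functions are dense in continuous ones in order to secure uniqueness (and to apply Riesz to $C_c(W)$ rather than merely to $C_c^\infty(W)$). The remaining steps are formal consequences of the Riesz representation theorem.
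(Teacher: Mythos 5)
The paper offers no proof of its own here---it simply cites \cite[Proposition 6.8]{Gubler}---and your argument reconstructs essentially that proof: the local bound $|L(f)|\le C_K\,\|f\|_\infty$ obtained by pushing the integral down to the polyhedral complexes $\Trop(U_i)$, the density of $C_c^\infty(W)$ in $C_c(W)$ under uniform convergence with supports in a fixed compact set (this is \cite[Proposition 3.3.5]{CLD}, which the paper itself invokes in Lemma \ref{Lemma MA(f_i)=d'd''(f_i)}), and the Riesz--Markov--Kakutani theorem. The one imprecision is that $\rho_i f$ need not factor through $\trop_{U_i}$ on all of $V_i$ (smoothness only gives such a factorization locally, on subcharts), so your displayed identity requires a further refinement into finitely many subcharts on which each piece does factor---exactly as in the definition of the integration map---but this affects neither the sup-norm estimate nor the structure of the argument.
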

\begin{proof}
	See \cite[Proposition 6.8]{Gubler}.
\end{proof}
\begin{defn} Let $U$ be an open subset of $\R^r$.
	 A superform $\alpha\in\AS^{p,p}(U)$ is called \emph{strongly positive} if there exist finitely many superforms  $\alpha_{j,s}$ of type $(0,1)$ and non-negative smooth functions $f_s$ on $U$ such that 
$$\alpha=\sum_{s}f_s\alpha_{1,s}\wedge J(\alpha_{1,s})\wedge\ldots\wedge \alpha_{p,s}\wedge J(\alpha_{p,s}).$$

Let $\mathscr{C}$ be a polyhedral complex and $\Omega$ an open subset of $|\mathscr{C}|$. 
A superform $\alpha\in \AS^{p,p}(\Omega)$ is called \emph{strongly positive} if there is a polyhedral decomposition of $\mathscr{C}$ such that the restriction of $\alpha$ to  $\text{relint}(\sigma)\cap \Omega$  is strongly positive for every polyhedron $\sigma\in\mathscr{C}$. 

For an open subset $W$ of $\Xan$, a smooth form $\omega\in \AS^{p,p}(W)$ is called $\emph{strongly positive}$ if for every point $x$ in $W$ there is a tropical chart $(V,\varphi_U)$ with $x\in V$ such that $\omega= \alpha\circ \trop_U$ on $V$ for a strongly positive form $\alpha\in \AS_{\Trop(U)}^{p,p}(\trop_U(V))$.

Note that for forms of type $(0,0)$, $(1,1)$, $(n-1,n-1)$ and $(n,n)$ the notion of strongly positivity defined here coincides with the other positivity notions from \cite[\S 5.1]{CLD}. 
Thus, we just say that a smooth form $\omega$  in $\AS^{p,p}(W)$ is \emph{positive} if it is  of one of these types and it is strongly positive. 

A smooth function $f$ is  (strongly) positive as a form if and only if $f\geq 0$. \end{defn}

\begin{defn} Let $W$ be an open subset of $\Xan$. We call a function $f\colon W \to[-\infty,\infty]$ \emph{locally integrable} if $f$ is integrable with respect to every measure $ \mu_\alpha$ associated to  a form $\alpha\in \mathcal{A}^{n,n}_c(W)$. We write $\int_W f\wedge \alpha:=\int_W f~ d\mu_\alpha.$

 Then for every locally integrable (e.g. continuous) function $f\colon W \to[-\infty,\infty]$ one can define a current in the sense of \cite[\S 4.2]{CLD} by 
	\begin{align*}
	d'd''[f]\colon \AS^{n-1,n-1}_c(W)\to \mathbb{R},~\alpha\mapsto \int_{W} f\wedge d'd'' \alpha.
	\end{align*}
\end{defn}

\begin{defn}\label{Def psh}
Let $W$ be an open subset of $\Xan$.
	A locally integrable function $f\colon W \to[-\infty,\infty)$ is called \emph{psh} if  $f$ is upper semi-continuous and $d'd''[f]$ is a positive current, i.e.~$d'd''[f](\omega)\geq 0$ for all positive forms $\omega\in \AS_c^{n-1,n-1}(W)$.
\end{defn}
Note that we do not require that a psh function has to be continuous, contrary to \cite[D\'efinition 5.5.1]{CLD}.
\begin{prop}\label{Prop partition} Let $W$ be a paracompact open subset of $\Xan$ and $(V_i)_{i\in I}$ be an open covering of $W$. 
	Then there are smooth non-negative functions $(\eta_{j})_{j\in J}$ with compact support on $W$ such that 
	\begin{enumerate}
		\item the family $(\supp(\eta_j))_{j\in J}$ is locally finite on $W$;
		\item we have $\sum\nolimits _{j\in J}\eta_j\equiv 1$ on $W$;
		\item for every $j\in J$, there is a $i(j)\in I$ such that $\supp(\eta_j)\subset V_{i(j)}$.
		\end{enumerate}
	We call $(\eta_{j})_{j\in J}$ a	\emph{partition of unity} subordinated to the open covering $(V_i)_{i\in I}$.
\end{prop}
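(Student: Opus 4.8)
The plan is to reduce the statement to the standard construction of a partition of unity on a paracompact, locally compact Hausdorff space, the only genuinely new input being the production of \emph{smooth} (in the sense of Definition \ref{Def glatt}) bump functions with compact support on $\Xan$. Recall that the analytification of a variety is locally compact and Hausdorff, so $W$ inherits these properties, and that tropical charts form a basis of its topology (see \cite{Gubler}). I would first record the two closure properties of the class $C^\infty$ that the argument needs: it is closed under locally finite sums (smoothness is a sheaf property by Definition \ref{Def glatt}, so a locally finite sum is locally a finite sum of smooth functions), and the quotient $g/G$ of a smooth function $g$ by a strictly positive smooth function $G$ is again smooth. For the latter I would argue locally: near a point both $g$ and $G$ are pulled back from superforms on a common tropical subchart $(V,\varphi_U)$ obtained by intersecting the finitely many charts involved (using \cite[Proposition 4.16]{Gubler}), where they are restrictions of classical smooth functions on $\R^{r}$ with $G>0$, so the quotient is classically smooth and descends to a smooth function on $V$.

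\textbf{Bump functions.} The heart of the proof is the following construction. Fix $x\in W$ and an index $i$ with $x\in V_i$. Since tropical charts form a basis, choose a tropical chart $(V_x,\varphi_{U_x})$ with $x\in V_x$ and $\overline{V_x}$ compact and contained in $V_i$, and write $V_x=\trop_{U_x}^{-1}(\Omega_x)$ for $\Omega_x$ open in $\Trop(U_x)\subset \R^{r}$. Put $p_x:=\trop_{U_x}(x)\in\Omega_x$. As $\Trop(U_x)$ is closed in $\R^{r}$, I can pick a classical smooth function $\rho$ on $\R^{r}$ with $0\le\rho\le1$, with $\rho\equiv1$ on a neighborhood of $p_x$, and whose support meets $\Trop(U_x)$ in a compact subset of $\Omega_x$. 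Then $\rho|_{\Trop(U_x)}$ is a smooth function on $\Omega_x$ with compact support, and $\chi_x:=(\rho|_{\Trop(U_x)})\circ\trop_{U_x}$, extended by zero, lies in $C^\infty(W)$: its support equals $\trop_{U_x}^{-1}$ of a compact subset of $\Omega_x$, which is compact because $\trop$ — and hence $\trop_{U_x}$ — is proper, and it is contained in $V_x\subset V_i$. Moreover $\chi_x\equiv1$ on the open neighborhood $N_x$ of $x$ on which $\rho\circ\trop_{U_x}=1$.

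\textbf{Assembly.} The sets $\{N_x\}_{x\in W}$ form an open cover of $W$. Using local compactness and paracompactness I would pass to a locally finite open refinement $\{W_\alpha\}_{\alpha}$ with each $\overline{W_\alpha}$ compact and $W_\alpha\subset N_{x(\alpha)}\subset V_{i(\alpha)}$, and then (shrinking lemma for locally finite open covers of the normal space $W$) to a shrinking $\{W_\alpha'\}$ with $\overline{W_\alpha'}\subset W_\alpha$ still covering $W$. For each $\alpha$, covering the compact set $\overline{W_\alpha'}$ by finitely many sets $\{\chi_y>0\}$ with $y\in W_\alpha$ and $\supp\chi_y\subset W_\alpha$, I set $g_\alpha:=\sum_j \chi_{y_j}$; this is a non-negative smooth function, strictly positive on $W_\alpha'$, with $\supp g_\alpha\subset\overline{W_\alpha}\subset V_{i(\alpha)}$. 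Since $\{W_\alpha\}$ is locally finite so is $\{\overline{W_\alpha}\}$, hence $\{\supp g_\alpha\}$ is locally finite and $G:=\sum_\alpha g_\alpha$ is a well-defined smooth function which is strictly positive on $W$ because the $W_\alpha'$ cover $W$. Finally $\eta_\alpha:=g_\alpha/G$ is the desired partition of unity, with $J=\{\alpha\}$ and index $i(\alpha)$: property i) follows from local finiteness of $\{\supp g_\alpha\}$, property ii) from $\sum_\alpha g_\alpha=G$, and property iii) from $\supp g_\alpha\subset V_{i(\alpha)}$.

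The step I expect to be the main obstacle is verifying that every operation keeps us inside the Chambert–Loir–Ducros class of smooth functions rather than merely the continuous ones: concretely, that the pullback $\rho\circ\trop_{U_x}$ is smooth with compact support (this is exactly where properness of $\trop$ is essential) and that division by the positive smooth function $G$ is smooth (which forces the passage to a common tropical subchart). The purely topological parts — existence of a relatively compact, locally finite refinement and the shrinking lemma — are standard for paracompact, locally compact Hausdorff spaces.
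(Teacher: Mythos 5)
Your proof is correct and takes essentially the same route as the paper, whose entire proof is the citation \cite[Proposition 5.10]{Gubler}: the argument given there is exactly yours, namely pulling back classical bump functions on $\R^r$ along the proper map $\trop_U$ to obtain smooth compactly supported bump functions, and then running the standard paracompact/locally compact assembly, normalizing by the sum. The one detail worth polishing is the smoothness of $g_\alpha/G$: the classical smooth function $\beta$ representing $G$ on an ambient open subset of $\R^N$ is guaranteed positive only at points of the tropical variety, so before dividing one should shrink the ambient open set to $\{\beta>0\}$, which leaves its intersection with $\Trop(U\cap U')$ unchanged precisely because $G>0$ on that intersection.
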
	
\begin{proof} See \cite[Proposition 5.10]{Gubler}.
\end{proof}
\begin{bem}\label{Bem 1}
	Note that every open subset $W$ of $\Xan$ is paracompact if $X$ is a curve \cite[Theorem 4.2.1 \& 4.3.2]{BerkovichSpectral}.
\end{bem}

\begin{prop}\label{Prop psh sheaf}
The psh functions  form a sheaf on $\Xan$.
\end{prop}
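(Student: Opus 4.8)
The plan is to show that \emph{psh} is a local property: for every open cover $W=\bigcup_{i\in I}W_i$, a function $f\colon W\to[-\infty,\infty)$ is psh if and only if each restriction $f|_{W_i}$ is psh. Since the $[-\infty,\infty)$-valued functions on $\Xan$ already form a sheaf, this locality statement is exactly what is needed to conclude that the psh functions form a subsheaf. The restriction direction is routine: upper semi-continuity and local integrability are local conditions, and if $\omega\in\AS_c^{n-1,n-1}(W_i)$ is positive, extending it by zero gives a positive form in $\AS_c^{n-1,n-1}(W)$ whose support is compact and contained in $W_i$; positivity of $d'd''[f]$ on $W$ then yields $d'd''[f|_{W_i}](\omega)\geq 0$, and the same extension-by-zero shows local integrability descends.

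For the converse, which is the essential point, assume each $f|_{W_i}$ is psh. Upper semi-continuity of $f$ is immediate. Since $W$ is paracompact (for $X$ a curve this is Remark~\ref{Bem 1}, and in general this holds for analytifications of algebraic varieties), Proposition~\ref{Prop partition} provides a partition of unity $(\eta_j)_{j\in J}$ of smooth non-negative functions with compact support subordinate to $(W_i)_{i\in I}$, say $\supp(\eta_j)\subset W_{i(j)}$. Local integrability of $f$ on $W$ follows by writing any $\alpha\in\AS^{n,n}_c(W)$ as the finite sum $\sum_j\eta_j\alpha$ over the indices with $\supp(\eta_j)\cap\supp(\alpha)\neq\emptyset$ and invoking local integrability on each $W_{i(j)}$.

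To verify positivity of the current, fix a positive form $\omega\in\AS_c^{n-1,n-1}(W)$. For each $j$, the form $\eta_j\omega$ is again positive of bidegree $(n-1,n-1)$ (a non-negative smooth function times a positive form is positive for this bidegree) and has compact support contained in $W_{i(j)}$, so $\eta_j\omega\in\AS^{n-1,n-1}_c(W_{i(j)})$. Because $f|_{W_{i(j)}}$ is psh, we obtain $\int_{W_{i(j)}}f\wedge d'd''(\eta_j\omega)\geq 0$. Since $\supp(\omega)$ is compact and the supports $\supp(\eta_j)$ are locally finite, only finitely many $\eta_j$ meet $\supp(\omega)$, so $\sum_j\eta_j\omega=\omega$ is a finite sum and $d'd''\omega=\sum_j d'd''(\eta_j\omega)$, each summand having support inside $W_{i(j)}$. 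Hence
$$d'd''[f](\omega)=\int_W f\wedge d'd''\omega=\sum_j\int_{W_{i(j)}}f\wedge d'd''(\eta_j\omega)\geq 0,$$
so $d'd''[f]$ is a positive current and $f$ is psh on $W$.

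The only genuinely delicate points are checking that $\eta_j\omega$ remains a positive form, so that psh-ness on $W_{i(j)}$ can be applied, and that $d'd''$ commutes with the locally finite partition-of-unity sum; both follow from the definitions together with the fact that $d'd''$ does not enlarge supports. I expect the positivity-preservation under multiplication by $\eta_j$ to be the step requiring the most care, as it relies on the coincidence (noted after the definition of strong positivity) that for bidegree $(n-1,n-1)$ the notion of positivity agrees with strong positivity, for which multiplication by a non-negative smooth function manifestly preserves the defining decomposition.
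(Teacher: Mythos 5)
Your argument follows the same strategy as the paper's proof: a smooth partition of unity subordinate to the cover, the finite decomposition $\omega=\sum_j\eta_j\omega$ of a compactly supported test form, positivity of each $\eta_j\omega$ (using that in bidegree $(n-1,n-1)$ positivity agrees with strong positivity, so multiplication by $\eta_j\geq 0$ visibly preserves it), and linearity of the current together with psh-ness of $f$ on each $W_{i(j)}$. All of this, including the preliminary step establishing local integrability of $f$ on $W$, matches the paper's argument.

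The genuine gap is your parenthetical claim that $W$ itself is paracompact, asserting that ``in general this holds for analytifications of algebraic varieties''. Proposition \ref{Prop partition}, which you invoke to obtain the partition of unity, is stated only for \emph{paracompact} open subsets, and Remark \ref{Bem 1} guarantees paracompactness of every open subset of $\Xan$ only when $X$ is a curve. Proposition \ref{Prop psh sheaf}, however, lives in Section 3, where $X$ is an arbitrary $n$-dimensional variety; for such $X$ the paracompactness of an arbitrary open $W\subset\Xan$ is not established anywhere in the paper and does not follow from general topology: paracompactness does not pass to open subsets of locally compact, $\sigma$-compact Hausdorff spaces (the open subspace $[0,\omega_1)$ of the compact ordinal space $[0,\omega_1]$ is a standard counterexample), so knowing that $\Xan$ is paracompact does not help. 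This is exactly the subtlety the paper's proof is engineered to avoid: given the test form $\omega$, it chooses a paracompact open neighborhood $V$ of the compact set $\supp(\omega)$ inside $W$, which exists by \cite[2.1.5 \& Lemme 2.1.6]{CLD}, applies Proposition \ref{Prop partition} to the induced cover $(W_i\cap V)_{i\in I}$ of $V$, and concludes via the identity $d'd''[f](\omega)=d'd''[f|_{V}](\omega)$. With this one modification --- running your partition-of-unity argument on $V$ rather than on all of $W$ --- your proof becomes complete in the stated generality; as written, it is only valid when $X$ is a curve.
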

\begin{proof}

Let  $W$ be an open subset of $\Xan$ and $(W_i)_{i\in I}$ be an open covering of $W$. Consider a function $f\colon W\to [-\infty,\infty)$. 
If $f$ is psh on $W$, the restrictions $f|_{W_i}$ are clearly psh for every $i\in I$.

Assume that $f|_{W_i}$ is psh on $W_i$ for every $i\in I$ and show that $f$ is then psh on $W$.
Consider $\omega\in \AS^{n,n}_c(W)$ and let $V$ be a paracompact open neighborhood of $\supp(\omega)$ in $W$, which we can find by \cite[2.1.5 \& Lemme 2.1.6]{CLD}. 
Then $\omega\in \AS^{n,n}_c(V)$, the family $(V_i:=W_i\cap V)_{i\in I}$ defines an open covering of $V$ and the restrictions $f|_{V_i}$ are psh for every $i\in I$.  
Let $(\eta_j)_{j\in J}$ be a partition of unity subordinated to the covering $(V_i)_{i\in I}$ (see Proposition \ref{Prop partition}).
Then for every $j\in J$, we have $\eta_j\omega\in \AS_c^{n,n}(V_{i(j)})$.
Since $f|_{V_{i(j)}}$ is psh, the integral  $\int _{V_{i(j)}} f \wedge \eta_j\omega$ has to be finite for every $j\in J$.
Furthermore, we can write $\omega=\sum_{j\in J}^{}\eta_j\omega$ on $V$, where  this sum has to be finite as $\omega$ has compact support on $V$. 
Hence, 
\begin{align*}
	\int_W f \wedge \omega = 	\int_V f \wedge \omega = \sum_{j\in J} \int _{V_{i(j)}} f \wedge \eta_j\omega
\end{align*}
has to be finite as well, i.e.~$f$ is locally integrable. 

Now, consider a positive form $\omega\in\AS^{n-1,n-1}_c(W)$. 
Again, we work over the paracompact open neighborhood $V$ of $\supp(\omega)$ in $W$ and consider $\omega$ as a form in $\AS^{n-1,n-1}_c(V)$.
As above, we have a partition of unity $(\eta_j)_{j\in J}$ subordinated to the covering $(V_i)_{i\in I}$, and $\omega=\sum_{j\in J}\eta_j\omega$, where the sum is finite since $\omega$ has compact support.
Then $\eta_j \omega\in \AS_c^{n-1,n-1}(V_{i(j)})$ and $\eta_j\omega$ is a  positive form  for every $j\in J$.
Thus, 
\begin{align*}
d'd''[f|_{V}](\omega)= d'd''[f](\sum\nolimits_j \eta_j\omega)= \sum\nolimits_j d'd''[f](\eta_j \omega)\geq 0
\end{align*}
by linearity and the fact that $f|_{V_{i(j)}}$ is psh for every $j\in J$.
Since $d'd''[f](\omega)=d'd''[f|_{V}](\omega)$, the function $f$ is psh on $W$.
\end{proof}

Next, we translate a very useful characterization of smooth psh functions from \cite[Lemme 5.5.3]{CLD} to our setting.
\begin{prop}\label{Prop smooth psh}
Let $W$ be an open subset of $\Xan$.	
A smooth function $f\colon W\to \R$ is psh if and only if for every $x\in W$ there is a tropical chart $(V,\varphi_U\colon U\to \G^r_m)$ of $W$ with $x\in V$ such that $f=\psi\circ \trop_U$ on $V$ for a smooth function $\psi\colon \R^r\to \R$ whose restriction $\psi|_\sigma$ to every polyhedron $\sigma$ in $\R^r$ with $\sigma\subset \trop_U(V)$ is convex.
\end{prop}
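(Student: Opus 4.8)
The plan is to exploit that both psh-ness and the stated local condition are local, so that by the sheaf property of psh functions (Proposition \ref{Prop psh sheaf}) it suffices to verify the equivalence in a neighborhood of each point $x\in W$. Since $f$ is smooth, by the definition of $\AS^{0,0}$ there is a tropical chart $(V,\varphi_U)$ with $x\in V$ and a smooth $\psi\colon\R^r\to\R$ with $f=\psi\circ\trop_U$ on $V$; thus the content is to show that, on such a chart, $f$ is psh if and only if $\psi|_\sigma$ is convex for every polyhedron $\sigma\subset\trop_U(V)$.

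The key reduction is to represent the current $d'd''[f]$ by a smooth form. First I would note that for smooth $f$ and any test form $\omega\in\AS^{n-1,n-1}_c(V)$, the integration-by-parts (Stokes) formula for these forms yields
\begin{align*}
d'd''[f](\omega)=\int_V f\wedge d'd''\omega=\int_V d'd''f\wedge\omega,
\end{align*}
the boundary terms vanishing because $\omega$ has compact support. Hence the current $d'd''[f]$ is the current of integration against the smooth $(1,1)$-form $d'd''f$. Positivity of this current is then equivalent to pointwise positivity of the form $d'd''f$: the implication ``form positive $\Rightarrow$ current positive'' is immediate, since $d'd''f\wedge\omega$ is then a positive $(n,n)$-form and Proposition \ref{Prop Radon} gives a non-negative measure with non-negative total integral; the converse follows by a localization argument, testing against positive forms concentrated near any point where $d'd''f$ would fail to be positive.

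It then remains to translate positivity of $d'd''f$ into convexity. On the chart, $d'd''f$ corresponds to the superform $d'd''\psi=\sum_{i,j}\frac{\partial^2\psi}{\partial x_i\partial x_j}\,d'x_i\wedge d''x_j$ (up to sign) on $\trop_U(V)$. For a $(1,1)$-form strong positivity coincides with positivity, and by the definition of strong positivity on a polyhedral complex this means: after a suitable polyhedral decomposition, the restriction of $d'd''\psi$ to directions in $\mathbb{L}_\sigma$ on $\relint(\sigma)$ is a non-negative combination of forms $\alpha\wedge J(\alpha)$, i.e.~the Hessian of $\psi$ restricted to $\mathbb{L}_\sigma$ is positive semidefinite. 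Since a smooth function on a convex set has positive semidefinite Hessian in the relevant directions if and only if it is convex, this is exactly the condition that $\psi|_\sigma$ be convex on each polyhedron $\sigma\subset\trop_U(V)$. Reading this chain of equivalences in both directions, and recombining the local charts via the sheaf property, gives the claim.

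The main obstacle I expect is the careful bookkeeping in the second step: proving that positivity of the current $d'd''[f]$ forces pointwise positivity of the smooth form $d'd''f$ (the localization argument), and matching the polyhedral-complex notion of strong positivity --- which is formulated via a decomposition and relative interiors --- with the pointwise Hessian/convexity condition appearing in the statement, including the quantification over all polyhedra $\sigma\subset\trop_U(V)$ rather than only the cells of one fixed complex.
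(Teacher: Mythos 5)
Your strategy is genuinely different from the paper's. The paper does not re-derive the link between positivity and convexity at all: it quotes \cite[Lemme 5.5.3]{CLD}, which states exactly this characterization for smooth functions of the form $\psi\circ\trop_U$ on a \emph{compact} analytic domain. The only real work in the paper's proof is the transfer between that compact setting and the open chart: choose a compact neighborhood $B$ of $\trop_U(x)$ in $\Omega=\trop_U(V)$, use properness of $\trop_U$ to get a compact analytic domain $Y=\trop_U^{-1}(B)$ on which the restricted current is still positive, apply the lemma there, then shrink to an open subchart $(V',\varphi_U)$ with $\trop_U(V')\subset B$ and invoke the sheaf property (Proposition \ref{Prop psh sheaf}). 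What you propose is instead a direct proof, i.e.~essentially a re-proof of \cite[Lemme 5.5.3]{CLD} itself. Your first step is unobjectionable: the identity $d'd''[f]=[d'd''f]$ for smooth $f$ via Stokes is exactly what the paper records at the beginning of Section 4 (citing \cite[Theorem 5.17]{Gubler}).

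The problem is that the two steps you defer are not bookkeeping; they are the entire mathematical content of the lemma the paper cites, and as written they are gaps. First, ``current positive $\Rightarrow$ form pointwise positive'' requires the superform linear algebra (a $(1,1)$-superform pairing non-negatively against all strongly positive $(n-1,n-1)$-superforms has positive semidefinite restriction), together with the fact that integration of $(n,n)$-forms against $\Trop(U)$ only involves the $n$-dimensional cells weighted by positive tropical multiplicities; the localization argument therefore only ever controls the Hessian of $\psi$ along $\mathbb{L}_\tau$ at points of $\relint(\tau)$ for \emph{maximal} cells $\tau$ of the Bieri--Groves complex. Second---and this is the harder point, which you yourself flag as an expected obstacle rather than resolve---the statement requires convexity of $\psi|_\sigma$ for \emph{every} polyhedron $\sigma\subset\trop_U(V)$. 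Such a $\sigma$ need not be a cell: it can straddle several maximal cells, or lie inside a lower-dimensional cell, and $\mathbb{L}_\sigma$ need not be contained in any single $\mathbb{L}_\tau$. Getting from positive semidefinite Hessians on the relative interiors of maximal cells to convexity on all such $\sigma$ is a genuine argument: one must decompose each segment of $\sigma$ into subsegments lying in single cells, use that the Bieri--Groves complex is \emph{pure} of dimension $n$ (Theorem \ref{BG}) so that every lower-dimensional cell is a face of a maximal one, and use continuity of the Hessian of the ambient smooth $\psi$ to propagate positivity to cell boundaries and to the crossing points. None of this appears in your outline. So either carry out these two steps in full, or do what the paper does and reduce to \cite[Lemme 5.5.3]{CLD}, in which case only the compactness/properness reduction and the sheaf property remain to be checked.
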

\begin{proof} First, assume that $f$ is psh on $W$, i.e.~$d'd''[f]$ defines a positive current on $W$.
		Since $f$ is smooth, we can find for every $x\in W$  a tropical chart $(V,\varphi_U\colon U\to \G^r_m)$ in $W$ with $x\in V$ such that $f=\psi \circ\trop_U$ on $V=\trop_U^{-1}(\Omega)$ for a smooth function $\psi\colon \R ^r\to \R$ and an open subset $\Omega$ of $\Trop(U)$.
     	We choose a compact neighborhood $B$ of $\trop_U(x)$ in $\Omega$. 
     	Then the preimage $Y:=\trop_U^{-1}(B)$ under the proper map $\trop_U$ is a compact analytic domain. 
     	The restriction of the current $d'd''[f]$ to this compact analytic domain $Y$ is still positive. 
     	Applying \cite[Lemme 5.5.3]{CLD} to $f=\psi\circ \trop_U\colon Y\to \R$, we know that for every polyhedron $\Delta$ in $\R^r$ with $\Delta\subset B$  the restriction $\psi|_\Delta$ is convex.
     	Now, let $\Omega'$ be an open neighborhood of $\trop_U(x)$ in $B$ and consider the open neighborhood  $V':=\trop_U^{-1}(\Omega')$ of $x$ in $\trop_U^{-1}(B)$. 
     	Then the pair $(V',\varphi_U)$ is a tropical chart in $W$ that contains $x$ and $f|_{V'}= \psi\circ \trop_U$, where $\psi$ is smooth.
     	Consider a polyhedron $\sigma$ in $\R^r$ with $\sigma\subset \Omega'=\trop_U(V')$. 
     	Then $\sigma\subset B$, and so  $\psi|_{\sigma}$ is convex.
     
Next, we assume that there is for every $x\in W$ a tropical chart $(V,\varphi_U)$ of $W$ with $x\in V$ such that $f=\psi\circ \trop_U$ on $V$ for a function $\psi$ satisfying i) and ii). 
Our goal is to show that $f$ is psh in a neighborhood of $x$. 
As above, we choose a compact neighborhood $B$ of $\trop_U(x)$ in $\trop_U(V)$ and set $Y:=\trop_U^{-1}(B)$. 
Since every polyhedron $\Delta$ in $\R^r$ with $\Delta\subset B$ is contained in $\trop_U(V)$, the restriction $\psi|_{\Delta}$ is convex by ii).
 Again \cite[Lemme 5.5.3]{CLD} tells us that $f$ is psh on the compact analytic domain $Y$. Choosing $(V',\varphi_U)$ as above, we obtain a tropical chart of $W$ with $x\in V'$. 
Due to $V'\subset Y$, the function $f$ is psh on the open neighborhood $V'$ of $x$. 
Psh functions form a sheaf by Proposition \ref{Prop psh sheaf}, and so the claim follows.
\end{proof}
\begin{bem}\label{Pullback Bem}
Let  $X$ and $X'$ be algebraic varieties over $K$,
$\varphi\colon W'\to W$ a morphism of analytic spaces for open subsets $W\subset \Xan$ and $W'\subset (X')^{\an}$, and $f\colon W\to [-\infty,\infty)$ a psh function on $W$. 
If $f$ is smooth, it follows directly from the definition of smooth functions by Chambert-Loir and Ducros \cite[3.1.3]{CLD} and \cite[Proposition 7.2]{Gubler}  that $\varphi ^*f$ is smooth on $\varphi^{-1}(W)$.
Furthermore, \cite[Proposition 7.2]{Gubler} and Proposition \ref{Prop smooth psh} imply that $\varphi ^*f$ is also psh on $\varphi^{-1}(W)$. 
If $f$ is not smooth, it is not clear whether $\varphi ^*f$ is psh or not.
At the end of this paper, we will answer this question positively for a continuous psh function $f$ on an open analytic subset of a smooth proper curve.
\end{bem}

We recall the following definitions from \cite[\S 5.6]{CLD}.

\begin{defn} A function $f\colon W\to \R$ on an open subset $W$ of $\Xan$ is called \emph{locally psh-approximable} if every point of $W$ has a neighborhood $V$ in $W$ such that $f$ is the uniform limit of smooth psh functions $f_i$ on $V$. 
A function  $f$ is \emph{locally approximable} if it is locally the difference of two locally psh-approximable functions.	
Furthermore, we say that $f$ is \emph{globally psh-approximable} (resp.~\emph{globally approximable}) on $W$ if $f$ is a uniform limit (resp.~the difference of two uniform limits) of smooth psh functions on $W$.

Let $f\colon W\to \R$ be a locally psh-approximable function, then there exists a unique positive Radon measure $\text{MA}(f)$ on $W$ such that for every open subset $V\subset W$, $g\in C^\infty_c(V)$ and smooth psh functions $f_i\in C^{\infty}(V)$ converging uniformly to $f|_V$, we have
	$$\int_{V} g~d\text{MA}(f)=\lim\limits_{i\to \infty}\int_{V} g\wedge (d'd''f_i)^n,$$
where $n=\dim(X)$ and $(d'd''f_i)^n$ is defined as the $n$-th wedge product of $d'd''f_i$.
We call $\text{MA}(f)$ the \emph{Monge-Amp\`ere measure} of $f$. 

For a locally approximable function $f$ that is given locally by $f_V^+-f_V^-$ for locally psh-approximable functions $f_V^+$ and $f_V^-$ on $V$, we define the Monge-Amp\`ere measure $\text{MA}(f)$ to be the measure obtained by gluing $\text{MA}(f_V^+)- \text{MA}(f_V^-)$. 
Note that the definition is independent of the decompositions.
For details see \cite[Corollaire 5.6.5 \& 5.6.6]{CLD} and \cite[D\'efinition 5.6.7]{CLD}.
\end{defn}

\begin{lem}\label{Lem Curr MA} Let $X$ be a curve, $W$ be an  open subset of $\Xan$ and $f\colon W\to \R$. 
	\begin{enumerate}
		\item  If $f$ is locally approximable, we have $$d'd''[f](g)=\int _W g~d\text{MA}(f)$$ for every $g\in C^\infty_c(W)$.
		\item  If $f$ is locally psh-approximable, then $f$ is in particular psh. 
	\end{enumerate}
\end{lem}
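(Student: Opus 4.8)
The plan is to reduce everything to a single local computation for a function that is a \emph{uniform} limit of smooth psh functions, and then to patch via a partition of unity. Since $X$ is a curve we have $n=\dim(X)=1$, so for a locally psh-approximable function the defining property of the Monge--Amp\`ere measure reads $\int_V g~d\text{MA}(f)=\lim_{i\to\infty}\int_V g\wedge d'd''f_i$; the higher wedge powers disappear, and this is exactly what makes the comparison with the current $d'd''[f](g)=\int_W f\wedge d'd''g$ feasible.

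First I would treat the basic case where $f$ is the uniform limit on an open set $V$ of smooth psh functions $f_i$, and fix $g\in C_c^\infty(V)$. The form $d'd''g$ is a smooth $(1,1)$-form with compact support, so by Proposition~\ref{Prop Radon} it defines a signed Radon measure $\mu_{d'd''g}$ of finite total mass supported in $\supp(g)$. Since $f_i\to f$ uniformly on the compact set $\supp(g)$, the estimate $|\int f_i\,d\mu_{d'd''g}-\int f\,d\mu_{d'd''g}|\le \sup_{\supp(g)}|f_i-f|\cdot|\mu_{d'd''g}|(V)$ shows $\int_V f\wedge d'd''g=\lim_i\int_V f_i\wedge d'd''g$. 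The crucial link is then the integration-by-parts (Stokes) formula for smooth superforms, which gives $\int_V f_i\wedge d'd''g=\int_V g\wedge d'd''f_i$; combined with the definition of $\text{MA}(f)$ for $n=1$ this yields $d'd''[f](g)=\int_V g~d\text{MA}(f)$.

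To obtain (i) in general I would localize. For locally psh-approximable $f$, a partition of unity $(\eta_j)$ subordinated to a cover by sets on which $f$ is a uniform limit (Proposition~\ref{Prop partition}) lets me write any $g\in C_c^\infty(W)$ as a finite sum $g=\sum_j\eta_j g$ on $\supp(g)$; applying the basic case to each $\eta_j g$ and summing, using linearity of both the current and the measure, gives the identity. For a general locally approximable $f$, written locally as $f^+-f^-$ with $f^\pm$ locally psh-approximable, I would use that $d'd''[\cdot]$ is linear in the function and that $\text{MA}(f)$ is by definition glued from $\text{MA}(f^+)-\text{MA}(f^-)$, reducing to the psh-approximable case just handled.

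Part (ii) then follows quickly. A locally psh-approximable $f$ is a local uniform limit of continuous functions, hence continuous, so in particular upper semi-continuous and locally integrable; it is also locally approximable, so (i) applies. Since $n-1=0$, a positive form in $\AS_c^{n-1,n-1}(W)$ is just a non-negative $g\in C_c^\infty(W)$, and because $\text{MA}(f)$ is a positive measure we get $d'd''[f](g)=\int_W g~d\text{MA}(f)\ge 0$. Thus $d'd''[f]$ is a positive current and $f$ is psh; by Proposition~\ref{Prop psh sheaf} it suffices to verify this locally, which is what we have done. I expect the main obstacle to be the integration-by-parts step, i.e.~checking that the Stokes formula for superforms applies with the correct compact-support hypotheses and moves $d'd''$ from $g$ onto $f_i$ without sign trouble; the localization and convergence arguments are then routine.
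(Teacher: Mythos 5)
Your proof is correct and follows essentially the same route as the paper: uniform convergence of the approximants against the compactly supported signed Radon measure attached to $d'd''g$ (Proposition \ref{Prop Radon}), two applications of Stokes to move $d'd''$ onto the smooth approximants, the defining property of $\text{MA}$ in dimension one, and a partition-of-unity/gluing argument for the locally (psh-)approximable case. The only, harmless, variation is in part (ii): you deduce positivity of the current from the positivity of $\text{MA}(f)$ (which is built into its definition), whereas the paper reuses the intermediate computation and the positivity of the smooth forms $d'd''f_{jk}$ for psh $f_{jk}$ --- these rest on the same underlying fact.
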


\begin{proof} At first, note that every locally (psh-)approximable function is continuous, and so locally integrable.
We start with assertion i) and assume that $f$ is locally approximable.
We therefore can cover $W$ by open subsets $V_i$ on which $f$ is the difference of uniform limits $$f=\lim_{k\to \infty}f^+_{ik} - \lim_{k\to \infty}f^-_{ik}$$ of smooth psh functions $f^+_{ik}$ and $f^-_{ik}$ on $V_i$.
Choose a partition of unity $(\eta_j)_{j\in J}$ subordinated to this covering $(V_i)_{i\in I}$ (see Proposition \ref{Prop partition}).
We write for simplicity $V_j$ instead of $V_{i(j)}$ and $f_{jk}$ instead of $f_{i(j)k}$.
Then for every $j\in J$, we have $\eta_jg\in C_c^\infty(V_j)$.
Furthermore, $g=\sum_{j\in J}^{}\eta_jg$ on $W$. 
Since $g$ has compact support on $W$, the sum has to be finite.
Hence, 
	\begin{align*}
		d'd''[f](g)
		&= \sum_{j\in J}\int_{V_{j}}f\wedge d'd''(\eta_jg)	\\& = \sum_{j\in J}\left(\lim_{k\to\infty}\int_{V_{j}}f^+_{jk}\wedge d'd''(\eta_jg)-\lim_{k\to\infty}\int_{V_{j}}f^-_{jk}\wedge d'd''(\eta_jg)\right).\\
	\end{align*} 
Using  the theorem of Stokes \cite[Theorem 5.17]{Gubler} twice, we get $$\int_{V_{j}}f^{\pm}_{jk}\wedge d'd''(\eta_jg)= \int_{V_{j}}\eta_jg\wedge d'd''f^\pm_{jk},$$ and so we finally obtain 
\begin{align*}
	d'd''[f](g)	& = \sum_{j\in J}\left(\lim_{k\to\infty}\int_{V_{j}}\eta_jg\wedge d'd''f^+_{jk} -\lim_{k\to\infty}\int_{V_{j}}\eta_jg\wedge d'd''f^-_{jk} \right)\\	
		& = \sum_{j\in J}\left(\int_{V_{j}}\eta_jg ~d\text{MA}(f^+)-\int_{V_{j}}\eta_jg ~d\text{MA}(f^-)\right)\\
		&=  \sum_{j\in J}\int_{V_{j}}\eta_jg ~d\text{MA}(f)
		 = \int_{W}g~ d\text{MA}(f).
\end{align*}
For assertion ii), we assume that $f$ is locally psh-approximable, i.e.~we can cover $W$ by open subsets $(V_i)_{i\in I}$ such that $f$ is the uniform limit $f=\lim_{k\to\infty}f_{ik}$ of smooth psh functions $f_{ik}$ on $V_i$.
As above let $(\eta_j)_{j\in J}$ be a partition of unity subordinated to $(V_i)_{i\in I}$. 
For every non-negative function $g\in C^\infty_c(W)$, the smooth function $\eta_jg$ is also non-negative as $\eta_j\geq 0$ and has compact support on $V_j$.
From the calculations above, we get $$d'd''[f](g) =\sum_{j\in J}\lim_{k\to\infty}\int_{V_{j}}\eta_jg\wedge d'd''f_{jk}\geq 0.$$ 
This proves that $f$ is psh on $W$.
\end{proof}
\begin{lem}\label{Lemma MA(f_i)=d'd''(f_i)}
	Let $X$ be a curve, $f\colon W\to \R$ be a globally psh-approximable function, i.e.~$f$ is the uniform limit of smooth psh functions $f_i$ on $W$, and $g\in C_c^{0}(W)$, then 
	$$\int_W g~ d\text{MA}(f)= \lim\limits_{i\to \infty} \int_W g\wedge d'd''f_i.$$
\end{lem}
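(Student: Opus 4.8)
The plan is to extend the defining identity of the Monge-Amp\`ere measure from smooth test functions to merely continuous ones by an equicontinuity (uniform mass) argument. First note that $f$ is in particular locally psh-approximable, so $\text{MA}(f)$ exists as a positive Radon measure, and since $X$ is a curve we have $n=\dim(X)=1$, whence $(d'd''f_i)^n=d'd''f_i$. Taking $V=W$ in the defining property of $\text{MA}(f)$ and using the global uniform convergence $f_i\to f$, we obtain for every $\phi\in C_c^\infty(W)$ the identity
$$\int_W \phi~d\text{MA}(f)=\lim_{i\to\infty}\int_W\phi\wedge d'd''f_i. \qquad (*)$$
Write $\mu_i:=\mu_{d'd''f_i}$ for the Radon measure attached to the smooth $(1,1)$-form $d'd''f_i$ by Proposition \ref{Prop Radon}; since $f_i$ is smooth psh, $\mu_i$ is positive, and $\int_W g\wedge d'd''f_i=\int_W g~d\mu_i$. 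As $\text{MA}(f)$ is also positive, it therefore suffices to prove that $\int_W g~d\mu_i\to\int_W g~d\text{MA}(f)$ for every $g\in C_c^0(W)$.

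The key step is a bound on the masses $\mu_i(L)$ over compact sets that is uniform in $i$. Fix a compact $L\subset W$ and choose $\chi\in C_c^\infty(W)$ with $0\le\chi\le 1$ and $\chi\equiv 1$ on $L$. Since $\mu_i\ge 0$ and $\chi$ dominates the characteristic function of $L$,
$$\mu_i(L)\le\int_W\chi~d\mu_i=\int_W\chi\wedge d'd''f_i=\int_W f_i\wedge d'd''\chi,$$
where the last equality is integration by parts via the theorem of Stokes (\cite[Theorem 5.17]{Gubler}), exactly as in the proof of Lemma \ref{Lem Curr MA}. The right-hand side equals $\int_W f_i~d\mu_{d'd''\chi}$; because $f_i\to f$ uniformly on $W$, the quantities $\sup_{\supp(\chi)}|f_i|$ are bounded by some $M<\infty$, and as $|\mu_{d'd''\chi}|(W)<\infty$ by Proposition \ref{Prop Radon}, we conclude $\mu_i(L)\le M\cdot|\mu_{d'd''\chi}|(W)=:C$, a bound independent of $i$.

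Finally I would conclude by an $\varepsilon/3$ approximation. Given $g\in C_c^0(W)$ and $\varepsilon>0$, choose a compact neighbourhood $L$ of $\supp(g)$ and a smooth function $g_\varepsilon\in C_c^\infty(W)$ with $\supp(g_\varepsilon)\subset L$ and $\|g-g_\varepsilon\|_\infty<\varepsilon$ (one first approximates $g$ uniformly by a smooth function and then multiplies by a fixed cutoff equal to $1$ on $\supp(g)$ to control the support). Using positivity and the uniform bound,
$$\left|\int_W g~d\mu_i-\int_W g_\varepsilon~d\mu_i\right|\le\varepsilon\,\mu_i(L)\le\varepsilon C,\qquad \left|\int_W g~d\text{MA}(f)-\int_W g_\varepsilon~d\text{MA}(f)\right|\le\varepsilon\,\text{MA}(f)(L),$$
while $(*)$ gives $\int_W g_\varepsilon~d\mu_i\to\int_W g_\varepsilon~d\text{MA}(f)$. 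Combining these by the triangle inequality yields $\limsup_{i\to\infty}\left|\int_W g~d\mu_i-\int_W g~d\text{MA}(f)\right|\le\varepsilon\bigl(C+\text{MA}(f)(L)\bigr)$, and letting $\varepsilon\to 0$ proves the claim. The main obstacle is the uniform mass bound of the second step, since without it the passage from smooth to continuous test functions breaks down; a secondary point requiring justification is the sup-norm density of smooth compactly supported functions in $C_c^0(W)$ with controlled support, which rests on a Stone--Weierstrass-type density of smooth functions on $\Xan$.
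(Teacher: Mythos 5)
Your proof is correct and follows essentially the same route as the paper: approximate $g$ uniformly by functions $g_k\in C_c^\infty(W)$ (the paper cites \cite[Proposition 3.3.5]{CLD} for this), apply the defining property of $\text{MA}(f)$ to each smooth $g_k$, and interchange the two limits. Your uniform mass bound $\sup_i \mu_i(L)<\infty$, obtained from positivity of the $\mu_i$ and integration by parts via Stokes, is precisely the justification for the step the paper asserts without detail, namely that $\int_W g_k\wedge d'd''f_i \to \int_W g\wedge d'd''f_i$ uniformly in $i$.
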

\begin{proof}
By \cite[Proposition 3.3.5]{CLD}, there are smooth functions $g_k\in C^\infty_c(W)$ converging uniformly to $g$. 
Then 
\begin{align*}
\int_W g~ d\text{MA}(f)&= \lim\limits_{k\to \infty} \int_W g_k~ d\text{MA}(f)\\
& = \lim\limits_{k\to \infty} \lim\limits_{i\to \infty}\int_W g_k\wedge d'd'' f_i\\
& = \lim\limits_{i\to \infty} \lim\limits_{k\to \infty}\int_W  g_k\wedge d'd'' f_i\\
& = \lim\limits_{i\to \infty} \int_W  g\wedge d'd'' f_i.\\
\end{align*}
Note that we may change the order of the limits since $\int_Wg_k\wedge d'd''f_i$ converges  to $\int_W g\wedge d'd'' f_i$ uniformly in $i\in \N$.
\end{proof}
Model functions and their Monge-Amp\`ere measures play an important role in the proofs of the main theorems in Section \ref{Section Eq}. 
We therefore recall the definition of metrics, formal metrics and model functions.

\begin{defn}
	Let $X$ be a proper and normal variety over $K$ and $L$ be a line bundle on $X$.
	A \emph{continuous metric} $\Vert~\Vert$ on $L^{\an}$ associates to every section $s\in \Gamma(U,L)$ on a Zariski open subset $U$ of $X$ a continuous function $\Vert s\Vert\colon U^{\an}\to [0,\infty)$ such that $\Vert f\cdot s \Vert = |f|\cdot\Vert s \Vert$ holds for every $f\in \OS_X(U)$ and $\Vert s(x)\Vert=0$ if and only if $s$ vanishes in $x$.
	
	We call a continuous metric $\Vert ~\Vert $ on $L^{\an}$ \emph{smooth} (resp.~\emph{psh}) if for every open subset $U$ of $X$ and every invertible section $s$ of $L$ on $U$ the function $-\log\Vert s\Vert$ is smooth (resp.~psh) on $U^{\an}$.
	
	A continuous metric $\Vert~\Vert$ on $L^{\an}$ is \emph{psh-approximable} if there is a sequence of smooth psh metrics $\Vert~\Vert_k$ on $L^{\an}$ such that $\sup_{x\in \Xan} |\log (\Vert s_x(x)\Vert/\Vert s_x(x)\Vert_k)|$  
	  converges to zero for any local section $s_x$ of $L$ that does not vanish in $x$. Clearly, this is independent of the choice of $s_x$.
\end{defn}
\begin{defn}\label{Def model} Let $X$ be a proper variety over $K$ and let $L$ be a line bundle on $X$. 
	A \emph{semistable formal model} of $(X,L)$ is a pair $(\XS,\LS)$ consisting of a semistable  formal model $\XS$ of $\Xan$ and a line bundle $\LS$ on $\XS$ such that $\LS|_{\Xan}\simeq L^{\an}$. 
 Note that we always may assume that $\XS$ is strictly semistable by the semistable reduction Theorem \cite[Ch.~7]{BL} since $K$ is algebraically closed.
 
 Let $(\XS,\LS)$ be a semistable formal model of $(X, L^{\otimes m})$ for $m\in \N_{>0}$.
 Then one can define a continuous metric on $L^{\an}$ in the following way: 
 If $\US$ is a formal trivialization of $\LS$ and  $s$ is a section of $L^{\an}$ on $\US_\eta$ such that $s^{\otimes m}$ corresponds to $\lambda\in \OS_{\Xan}(\US_\eta)$ with respect to this trivialization, then $$-\log\Vert s(x)\Vert_{\LS}:=-\frac{1}{m}\log|\lambda(x) |$$ for all $x\in \US_\eta$.
This definition is independent of all choices and shows immediately that the defined metric is continuous. 
Metrics of this form are called \emph{$\Q$-formal metrics}, and they are called \emph{formal metrics} if $m=1$.

Let $\OS_X$ be the trivial line bundle on $X$.
A function $f\colon \Xan\to \R$ of the form $f=-\log\Vert1\Vert_{\LS}$ for a formal metric $\Vert~\Vert_{\LS}$ associated to a semistable formal model of $(X,\OS_X)$ is called \emph{model function.}
\end{defn}
We have the following statements for model functions due to Chambert-Loir and Ducros and Katz, Rabinoff and Zureick-Brown. 
The first theorem is a direct consequence of a result of Chambert-Loir and Ducros in \cite[\S 6.3]{CLD}.
\begin{Thm} 
\label{Thm Masse Model fct}
Let  $X$ be a projective variety over $K$ and $f=-\log\Vert 1\Vert _{\LS}$ be a model function on $\Xan$ for a semistable formal model $(\XS,\LS)$ of $(X,\OS_X)$.
	\begin{enumerate}
	\item The function $f$ is locally approximable on $\Xan$, and so the Monge-Amp\`ere measure $\text{MA}(f)$ exists. 
	\item We have the following identity of measures 
		$$\text{MA}(f)=\sum_Y \deg_{\LS}(Y)\delta_{\zeta_Y},$$
		where $Y$ runs over all irreducible components of the special fiber  $\XS_s$ and  $\zeta_Y$ is the unique point in $\Xan$ mapped to the generic point of $Y$ under the reduction map (see \cite[Proposition 2.4.4]{BerkovichSpectral}).
	\end{enumerate}
\end{Thm}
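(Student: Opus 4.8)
The plan is to deduce both statements from the theory of formal metrics and their Monge-Amp\`ere measures developed by Chambert-Loir and Ducros in \cite[\S 6.3]{CLD}, the only genuine task for us being to match their framework with the definition of model function used here.

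First I would record the translation of data. Since $L=\OS_X$ is trivial, the global section $1$ is invertible everywhere, so $f=-\log\Vert 1\Vert_{\LS}$ is a finite, continuous function on all of $\Xan$, and it is precisely the model function attached to the formal metric $\Vert~\Vert_{\LS}$ on $\OS_X^{\an}$ induced by the line bundle $\LS$ on the (strictly) semistable model $\XS$. This places $f$ exactly in the situation treated in \cite[\S 6.3]{CLD}, where $\Q$-formal (here: formal) metrics are studied. For part i) the key input is then the regularization result of Chambert-Loir and Ducros: a function coming from a formal metric associated to a semistable formal model is, locally, the difference of two uniform limits of smooth psh functions, the approximants being produced from the monomial structure of $\LS$ in formal trivializations near the special fiber. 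Granting this, $f$ is locally approximable in the sense defined above, and hence $\text{MA}(f)$ is defined.

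For part ii) I would argue that, once $f$ is known to be locally approximable, $\text{MA}(f)$ is computed by the intersection-theoretic description of the Monge-Amp\`ere measure of a model metric. Concretely, the measure is supported on the finitely many divisorial points $\zeta_Y$ attached to the irreducible components $Y$ of the special fiber $\XS_s$ (the unique preimages of the generic points of the $Y$ under the reduction map), and the mass at $\zeta_Y$ is the degree $\deg_{\LS}(Y)$ of $\LS$ along the $n$-dimensional component $Y$, i.e.~the top intersection number of $\LS|_Y$. Summing over all components yields the asserted identity $\text{MA}(f)=\sum_Y\deg_{\LS}(Y)\delta_{\zeta_Y}$; the semistability hypothesis is what guarantees that each $Y$ determines a single point $\zeta_Y$ and that $\deg_{\LS}(Y)$ is the correct weight.

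The genuinely technical content is the regularization underlying part i) --- the construction of smooth psh approximants of a model function --- but this is carried out in \cite[\S 6.3]{CLD}, so I expect the main obstacle for us to be purely bookkeeping: identifying our model function with their formal metric, and identifying our Monge-Amp\`ere measure (defined via limits of $(d'd''f_i)^{n}$) with their $c_1(\LS)^{\wedge n}$ localized at the divisorial points $\zeta_Y$. Once these identifications are made, both assertions follow directly from \cite[\S 6.3]{CLD}.
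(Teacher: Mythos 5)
Your proposal is correct and follows essentially the same route as the paper: part i) is proved there exactly as you outline, by invoking the Chambert-Loir--Ducros regularization (their Corollaire 6.3.5, which writes $\OS_X=L_1\otimes L_2^{-1}$ with psh-approximable formal metrics on $\LS_1,\LS_2$ and then takes local invertible sections to exhibit $f$ as a difference of uniform limits of smooth psh functions), and part ii) is quoted directly from their intersection-theoretic computation of the measure. The only correction is bibliographic: that computation is in \cite[\S 6.9]{CLD}, not \S 6.3, but this does not affect the argument.
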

\begin{proof} By \cite[Corollaire 6.3.5]{CLD}, we know that there are line bundles $L_1,L_2$ on $X$ with formal models $\LS_1$, $\LS_2$ such that $\OS_X=L_1\otimes L_2^{-1}, \LS=\LS_1\otimes \LS_2^{-1}$, the  corresponding metrics $\Vert~ \Vert _{\LS_{1}}$ and $\Vert ~\Vert _{\LS_{2}}$ are psh-approximable and
	$$-\log \Vert \cdot\Vert _{\LS}=-\log \Vert \cdot\Vert _{\LS_{1}}+\log\Vert \cdot\Vert _{\LS_2}$$
on $\Xan$.
Let $\Vert ~\Vert _{i,k}$ be a sequence of smooth psh metrics  converging uniformly to $\Vert \cdot\Vert _{\LS_{i}}$  on $\Xan$.
For every point $x$ in $\Xan$, let $U$ be an open subset of $X$ with $x\in U^{\an}$ and $s$ an invertible section of $L_1=L_2$ on $U$.
Then $-\log \Vert s\Vert _{\LS_{i,k}}$ is a smooth psh function on $U^{\an}$ for $i=1,2$ and for every $k\in \N$
\begin{align*}
-\log \Vert 1\Vert _{\LS}=-\log \Vert s\Vert _{\LS_{1}}+\log\Vert s\Vert _{\LS_2}=\lim_{k\to \infty} -\log \Vert s\Vert _{1,k}+\lim_{k\to \infty} \log\Vert s\Vert _{2,k}.
\end{align*}
Hence, the function $f=\log\Vert1\Vert_{\LS}$ is locally approximable.
	
The second assertion is shown in
\cite[\S 6.9]{CLD}.
\end{proof}
  The slope formula for line bundles by Katz, Rabinoff, and Zureick-Brown in \cite{KRZB} implies directly the following theorem.
\begin{Thm} 
\label{Thm Masse Model fct 2}
 Let $X$ be a smooth proper algebraic curve over $K$ and $f=-\log\Vert 1\Vert _{\LS}$ be a model function on $\Xan$ for a semistable formal model $(\XS,\LS)$ of $(X,\OS_X)$.
 The restriction $F$ of $f$ to $S(\XS)$ is a piecewise affine function and $f=F\circ \tau_{\XS}$ on $\Xan$.
Furthermore, we have $$\text{MA}(f) = dd^c F=dd^c f.$$
\end{Thm}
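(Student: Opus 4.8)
The plan is to read off all three assertions from the slope formula of \cite{KRZB}, combining it with the computation of $\text{MA}(f)$ already recorded in Theorem \ref{Thm Masse Model fct}.

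First I would invoke the slope formula for the model $(\XS,\LS)$ of $(X,\OS_X)$. Applied to the constant section $1$ of the trivial bundle (whose divisor on the generic fiber is empty), it describes the function $f=-\log\Vert 1\Vert_{\LS}$ entirely in terms of the skeleton: it shows that $f$ is constant along the fibers of the retraction $\tau_{\XS}$, so that $f=F\circ\tau_{\XS}$ for $F:=f|_{S(\XS)}$, and that $F$ is affine with integer slope along every edge of $S(\XS)$. In particular $F$ is piecewise affine, which establishes the first two assertions and shows that $f$ is lisse in the sense of Definition \ref{Def lisse}. Since $F$ bends only at the vertices of $S(\XS)$, the measure $dd^cF$ is supported on the vertex set, i.e.~on the points $\zeta_Y$ attached to the irreducible components $Y$ of the special fiber $\XS_s$.

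The heart of the matter is the quantitative part of the slope formula, which identifies the sum of the outgoing slopes of $F$ at a vertex $\zeta_Y$ with the degree $\deg_{\LS}(Y)$ of $\LS$ on the component $Y$. Granting this, the description of $dd^cF$ as a sum of outgoing slopes gives
$$dd^cF=\sum_Y\Big(\sum_{v_e}d_{v_e}F(\zeta_Y)\Big)\delta_{\zeta_Y}=\sum_Y\deg_{\LS}(Y)\,\delta_{\zeta_Y},$$
where $Y$ runs over the irreducible components of $\XS_s$. Comparing with the identity $\text{MA}(f)=\sum_Y\deg_{\LS}(Y)\,\delta_{\zeta_Y}$ from Theorem \ref{Thm Masse Model fct} then yields $\text{MA}(f)=dd^cF$.

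Finally, since $f=F\circ\tau_{\XS}$ with $F$ piecewise affine, $f$ is lisse and the equality $dd^cF=dd^cf$ is immediate from Definition \ref{Bem dd^c}: restricting to strictly affinoid domains $Y=\YS_\eta$ whose skeleta refine $S(\XS)$, the measure $dd^cf$ is by construction the one coinciding locally with the graph Laplacian of the corresponding restriction of $F$, and these local pieces glue to $dd^cF$ on all of $\Xan$ by the uniqueness in Definition \ref{Bem dd^c}. I expect the only genuine obstacle to be the careful translation of the KRZB slope formula into this language — matching its sign and normalization conventions with Thuillier's $dd^c$ and with the degree weights appearing in Theorem \ref{Thm Masse Model fct} — while the remaining steps are bookkeeping with the definitions.
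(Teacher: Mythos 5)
Your proposal is correct and follows essentially the same route as the paper, whose proof simply cites the slope formula \cite[Theorem 2.6]{KRZB} together with Theorem \ref{Thm Masse Model fct}; you have merely spelled out the details (the slope formula giving $f=F\circ\tau_{\XS}$ with $F$ piecewise affine and $\sum_{v_e}d_{v_e}F(\zeta_Y)=\deg_{\LS}(Y)$, the comparison with $\text{MA}(f)$, and the identification $dd^cF=dd^cf$ from Definition \ref{Bem dd^c}). No gaps.
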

\begin{proof}
This follows directly from \cite[Theorem 2.6]{KRZB} using Theorem \ref{Thm Masse Model fct}.
\end{proof}

\section{Comparison of the two notions of subharmonic functions}\label{Section Eq}
In this section, we consider a smooth proper algebraic curve $X$ over $K$. 
We show that every subharmonic function on $\Xan$ is psh and that every continuous psh function on $\Xan$ is subharmonic. 
In particular, the notion of subharmonic and the notion of psh agree for continuous functions.

\subsection{Thuillier's subharmonic functions are also subharmonic in the sense of Chambert-Loir and Ducros} In this subsection, we show that every subharmonic function on an open subset of $\Xan$ is psh. 
First, we prove that every  lisse subharmonic function is psh and that every subharmonic function is locally integrable.
Then we use these results and the fact that a subharmonic function is the  limit of subharmonic lisse functions from Proposition \ref{Prop Net} to prove the general claim. 
\begin{bem} Every signed Radon measure $\mu$ on an open subset $W$ of $\Xan$ defines the following current
$$[\mu]\colon C_c^\infty(W) \to \R, g\mapsto \int_Wg~d\mu$$ (see \cite[Example 6.3]{Gubler}). 
Consider a smooth function $f\in C^\infty(W)$, then we have seen in Proposition \ref{Prop Radon} that the smooth form $d'd''f$ corresponds to a signed Radon measure which we also denote by $d'd''f$. 
Using the theorem of Stokes \cite[Theorem 5.17]{Gubler}, we get $d'd''[f]=[d'd''f]$. Recall that a function is called psh if and only if this current is positive.

Analogously, for every lisse function $f\colon W\to \R$  we get a current $[dd^cf]$ for the corresponding measure $dd^cf$ from Definition \ref{Bem dd^c} which is positive if and only if $f$ is subharmonic (cf. Proposition \ref{Prop lisse subh}).
\end{bem}  
\begin{prop}\label{Lemma currents agree}
	Let $W$ be an open subset of $\Xan$ and $f\colon W\to \R$ a lisse function. For every type  II point $x\in W$, there is an open neighborhood $V$ of $x$ in $W$ on which the currents $d'd''[f]$ and $[dd^cf]$ agree.
\end{prop}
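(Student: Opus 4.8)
The plan is to reduce the statement to a purely local assertion about lisse functions and then invoke the two bridging results that are available for model functions, namely Lemma~\ref{Lem Curr MA}~i) and Theorem~\ref{Thm Masse Model fct 2}. Concretely, I would first establish the following key claim: for a lisse function $f$ and a type~II point $x\in W$ there are an open neighborhood $V$ of $x$ in $W$, finitely many model functions $g_1,\dots,g_m$ on $\Xan$ (in the sense of Definition~\ref{Def model}) and real numbers $c_1,\dots,c_m$ such that
\begin{align*}
f|_V=\sum_{i=1}^m c_i\, g_i\quad\text{on }V.
\end{align*}
Granting this, the proposition follows quickly, and I expect the construction of this linear combination to be the only substantial difficulty.

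Assuming the key claim, here is how I would finish. Each model function $g_i$ is locally approximable by Theorem~\ref{Thm Masse Model fct}~i), and locally approximable functions are stable under $\R$-linear combinations (a positive multiple of a locally psh-approximable function is again such, a negative multiple contributes to the subtracted summand in the defining difference, and sums are handled termwise). Hence $f|_V$ is locally approximable, and by $\R$-linearity of the Monge-Amp\`ere measure on locally approximable functions -- immediate from its definition via gluing of $\mathrm{MA}(f^+)-\mathrm{MA}(f^-)$ -- we obtain $\mathrm{MA}(f|_V)=\sum_{i=1}^m c_i\,\mathrm{MA}(g_i)|_V$. By Theorem~\ref{Thm Masse Model fct 2} we have $\mathrm{MA}(g_i)=dd^cg_i$ on $\Xan$, and Thuillier's operator $dd^c$ on lisse functions is $\R$-linear and local in the sense of Definition~\ref{Bem dd^c} (it is determined by the piecewise affine data on $\Gamma$); therefore $\mathrm{MA}(f|_V)=dd^cf$ on $V$. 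Applying Lemma~\ref{Lem Curr MA}~i) on $V$ now yields, for every $g\in C^\infty_c(V)$,
\begin{align*}
d'd''[f](g)=d'd''[f|_V](g)=\int_V g\, d\mathrm{MA}(f|_V)=\int_V g\, d(dd^cf)=[dd^cf](g),
\end{align*}
where the first equality is the locality of $d'd''[\cdot]$, since the test form $d'd''g$ is supported in $\supp(g)\subset V$. As $n=\dim X=1$, both $d'd''[f]$ and $[dd^cf]$ are functionals on $C^\infty_c=\AS^{0,0}_c$, so this shows that they agree on $V$.

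It remains to prove the key claim, which I regard as the heart of the argument and the main obstacle. Since $x$ is of type~II it is divisorial, so I would choose a strictly semistable formal model $\XS$ of the proper curve $X$ for which $x$ is a vertex of the skeleton $\Gamma:=S(\XS)$; refining $\XS$ if necessary and using that $f$ is lisse (hence piecewise affine on skeletons), I may assume $f=F\circ\tau_{\XS}$ near $x$ with $F$ affine along each of the edges $e_1,\dots,e_d$ of $\Gamma$ emanating from $x$. The germ of $F$ at $x$ is then determined by the $d+1$ real parameters $F(x)$ and the outgoing slopes along $e_1,\dots,e_d$. I would match these by model functions: real multiples of a constant $\val(K^\times)$-valued model function realize any value $F(x)$, while for each $i$ I subdivide $e_i$ at an interior type~II point $x_i$ close to $x$ and take the model function attached to the vertical divisor of the new component $C_{x_i}$, whose restriction to $\Gamma$ is a tent peaked at $x_i$ and hence has nonzero outgoing slope along $e_i$. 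An appropriate $\R$-linear combination of these functions and the constant then reproduces the germ of $F$ on the open star of $x$, and restricting to the preimage $V$ of a sufficiently small star under $\tau_{\XS}$ gives the identity $f|_V=\sum_i c_i g_i$. The delicate point to verify is that the outgoing-slope vectors of such model functions span all of $\R^d$ (equivalently, that the matrix of outgoing slopes along $e_1,\dots,e_d$ is invertible) and that arbitrary real values and slopes -- not only those lying in $\val(K^\times)$ -- become attainable after passing to $\R$-linear combinations; this rests on the correspondence between vertical divisors on semistable models and piecewise affine functions on the skeleton together with the standing hypothesis $\Q\subset\val(K^\times)$.
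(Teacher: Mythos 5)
Your overall strategy is the same as the paper's: express $f$ near the type~II point $x$ as a real constant plus an $\R$-linear combination of ``tent'' model functions attached to the edges at $x$, then conclude via Theorem~\ref{Thm Masse Model fct}, Theorem~\ref{Thm Masse Model fct 2} and Lemma~\ref{Lem Curr MA}. The deduction of the proposition from your key claim is correct (the $\R$-linearity of $\text{MA}$ you invoke is available precisely because $n=1$, and Thuillier's $dd^c$ is local and linear by Definition~\ref{Bem dd^c}); the paper packages this slightly differently, applying Lemma~\ref{Lem Curr MA}~i) to each tent separately and summing, but the content is identical.

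The genuine gap is in how you realize the tents as model functions, namely as $-\log\Vert 1\Vert_{\OS(C_{x_i})}$ for the new irreducible component $C_{x_i}$ of the special fiber. That is the DVR picture, and it is not available in this paper's setting: $K$ is algebraically closed, so $\val(K^\times)$ is dense and divisible, $R$ is not noetherian, and the irreducible components of the special fiber of a strictly semistable formal $R$-curve are in general \emph{not} Cartier divisors. Concretely, any Cartier divisor supported on the single component $C_v$ has a piecewise linear function vanishing off the star of $v$, linear with integer slope on each adjacent edge $e$, which forces its value at $v$ to lie in $\ell_e\Z$ for every adjacent edge $e$ of length $\ell_e$; when the adjacent edge lengths are incommensurable the only such divisor is $0$, so ``the model function attached to the vertical divisor of $C_{x_i}$'' need not exist. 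The paper circumvents this by construction: after blowing up so that $F$ is affine on each edge, $d(x,y_i)\in\Q$ and the kink $y_i'$ is the midpoint, it defines the tents $F_i$ directly as piecewise affine functions with slopes $\pm1$ and rational kink data, observes that they vanish on $\partial Y'=\partial\Gamma$ (\cite[Lemma 5.4]{JW}) so that extension by zero is a well-defined function on $\Xan$ which is piecewise $\Q$-linear with respect to a $G$-covering, and then invokes \cite[Proposition 8.11]{GK1} to conclude that these extensions are model functions; this is also where the normalization $\Q\subset\val(K^\times)$ actually enters. (A similar remark applies to your passage from the affinoid models furnished by Definition~\ref{Def lisse} to a global model $\XS$ with $f=F\circ\tau_{\XS}$ near $x$, which you assert after ``refining $\XS$''; the paper avoids this by working with the affinoid model and extending by zero.) By contrast, the point you flag as delicate, the invertibility of the matrix of outgoing slopes, is automatic: each tent has nonzero slope along exactly one edge at $x$, so that matrix is diagonal.
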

\begin{proof}
	Consider a point $x\in W$ and let $Y$ be a strictly affinoid domain containing $x$ in its interior. 
	Since $f$ is lisse, there is a  strictly semistable formal model $\YS$ of $Y$ with corresponding skeleton $S(\YS)$ and $f=F\circ \tau_{\YS}$ on $S(\YS)$ for a piecewise affine function $F$ on $S(\YS)$.
	
	If $x\notin S(\YS)$, then $f$ is constant on an open neighborhood $V$ of $x$, and so  $$d'd''[f]=[d'd''f]=[0]=[dd^c f]$$ on $V$.
	
	If $x\in S(\YS)$, we may assume that $x$ is a vertex in $S(\YS)$.
	Let $e_1=[x,y_1],\ldots,e_r=[x,y_r]$ be the edges in $S(\YS)$ emanating from $x$, $v_1,\ldots,v_r$ the corresponding tangent directions and $\lambda_i:=d_{v_i}F(x)$.
	By blowing up $\YS$, we may assume that $F|_{e_i}$ is affine, $d(x,y_i)\in \Q$ and that we divide the edge $e_i$ by an additional vertex $y'_i$ with $d(x,y_i)=2d(x,y'_i)$. 
	Denote this blowing up by $\YS'$, and the tangent direction corresponding to $[y'_i,y_i]$ by $v'_i$.
	Define the piecewise affine functions $F_i$ on the closed subset $\Gamma:=\bigcup_{i=1,\ldots,r}[x,y_i] $ of $S(\YS')$ by the following data
	\begin{align*}
F_i(x)=0, 
~d_{v_i}(F_i)(x)=\text{sgn}(\lambda_i)\delta_{ij}\text{ and }
d_{v_i'}(F_i)(y'_i)=-d_{v_i}(F_i)(x).
	\end{align*}
	Set $f_i=F_i\circ \tau_{\YS'}$ on $Y':=\tau_{\YS'}^{-1}(\Gamma)$, which is a strictly affinoid domain in $W$.
	 By definition, $f_i=0$ on $\partial Y'$ for every $i\in\{1,\ldots,r\}$.
	Note that $\partial Y'=\partial \Gamma$ (cf.~\cite[Lemma 5.4]{JW}).
	Hence, we can extend $f_i$ to $\Xan$ by setting $f_i=0$ on $\Xan\backslash Y'$. 
	Then we have a $G$-covering of $\Xan$ on which $f_i$ is piecewise linear, and so $f_i$ is a model function on $\Xan$ \cite[Proposition 8.11]{GK1}.
	Set $\Gamma':=\bigcup_{i=1,\ldots,r}[x,y_i']$ and $V:=\tau_{\YS'}^{-1}((\Gamma')^\circ)$, which is an open neighborhood of $x$ in $W$.
	By the definition of $f_i$ on $V$, we have on $V\subset Y$
\begin{align}\label{Eq Summe Fkt}
	f&= F\circ \tau_{\YS} = F\circ \tau_{\YS'}= (\sum_{i=1}^{r}|\lambda_i|F_i)\circ \tau_{\YS'} + F(x) = \sum_{i=1}^{r}|\lambda_i| \cdot f_i + F(x)
\end{align}
and 
\begin{align}\label{Eq Summe Masse}
	dd^c f= dd^c F= \sum_{i=1}^{r}\lambda_i\delta_x=\sum_{i=1}^{r}|\lambda_i|(dd^c(f_i)).
\end{align}
Since the functions $f_i$ are model functions, we know that they are locally approximable on $V$ and  $\text{MA}(f_i)=dd^c(f_i)$ by Theorem \ref{Thm Masse Model fct} and Theorem \ref{Thm Masse Model fct 2}.
Let $0\leq g\in C_c^\infty(V)$. Then for every $i\in \{1,\ldots,r\}$ we have by Lemma \ref{Lem Curr MA} and $\text{MA}(f_i)=dd^c(f_i)$ that
\begin{align*}
	d'd''[f_i](g)&=  \int_{V} g~d\text{MA}(f_i)= \int_{V} g~dd^c(f_i)= [dd^cf_i](g).
\end{align*}
Linearity and the Equations (\ref{Eq Summe Fkt}) and (\ref{Eq Summe Masse}) imply consequently $d'd''[f](g)=[dd^cf](g)$.
\end{proof}

\begin{prop}\label{Thm lisse}
	Let $W$ be an open subset of $\Xan$ and $f\colon W\to \R$ be a lisse function. If $f$ is subharmonic, then $f$ is psh.  
\end{prop}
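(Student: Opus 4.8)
The strategy is to reduce the global statement to a purely local one and then invoke the machinery already assembled. Since psh functions form a sheaf (Proposition~\ref{Prop psh sheaf}), it suffices to show that $f$ is psh in a neighborhood of every point $x\in W$. For points $x$ that are not of type~II, the structure of $f$ is simple (near type~III or type~I points the skeleton locally looks one-dimensional and $f$ is essentially an affine function of the single parameter), so the essential case is a point $x$ of type~II. This mirrors the outline given in the introduction: \emph{the complicated case is a point of type II}.

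The heart of the argument is to transfer the comparison from the level of currents to the level of measures. First I would fix a type~II point $x\in W$ and apply Proposition~\ref{Lemma currents agree} to obtain an open neighborhood $V$ of $x$ on which the two currents $d'd''[f]$ and $[dd^cf]$ coincide. On this $V$, for every positive form $\omega\in\AS^{0,0}_c(V)$ — that is, every non-negative $g\in C_c^\infty(V)$ — we have
\begin{align*}
d'd''[f](g)=[dd^cf](g)=\int_V g\;dd^cf.
\end{align*}

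Now I would use the hypothesis that $f$ is subharmonic together with Proposition~\ref{Prop lisse subh}, which tells us that a lisse function is subharmonic if and only if $dd^cf\geq 0$. Hence the measure $dd^cf$ is positive on $V$, and therefore $\int_V g\;dd^cf\geq 0$ for every non-negative $g\in C_c^\infty(V)$. Combining this with the displayed equality yields $d'd''[f](g)\geq 0$ for all such $g$; since $n=1$ here, the positive test forms in $\AS^{n-1,n-1}_c(V)=\AS^{0,0}_c(V)$ are exactly the non-negative smooth compactly supported functions, so $d'd''[f]$ is a positive current on $V$. By Definition~\ref{Def psh} this means $f|_V$ is psh, and gluing over a covering by such $V$ gives that $f$ is psh on $W$.

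The main obstacle I anticipate is the careful handling of the point types. Proposition~\ref{Lemma currents agree} is stated only for type~II points, so one must argue separately (and directly) that near a point which is not of type~II the equality of currents — or at least the positivity of $d'd''[f]$ — still holds; the blow-up construction in that proposition exploits that $x$ can be taken to be a vertex, which is available precisely for type~II points. For the remaining points the function $f$ is locally affine along the skeleton, so $dd^cf$ vanishes near such a point and the current $d'd''[f]$ is correspondingly trivial there, making the positivity immediate. Assembling these local pieces via the sheaf property is then routine.
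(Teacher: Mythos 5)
Your reduction to a local statement via Proposition~\ref{Prop psh sheaf} and your treatment of type~II points (combining Proposition~\ref{Lemma currents agree} with Proposition~\ref{Prop lisse subh} to get $d'd''[f]=[dd^cf]\geq 0$ locally) are exactly the paper's argument and are correct. The gap is in your dismissal of the remaining points, specifically type~III. You claim that near a point which is not of type~II the function $f$ is ``locally affine along the skeleton, so $dd^cf$ vanishes near such a point.'' This is false: a lisse function is given by a \emph{piecewise} affine function $F$ on a skeleton $S(\YS)$, and $F$ may have a breakpoint at a type~III point lying in the interior of an edge. Such a breakpoint can never be removed by refining the model, because vertices of skeletons of strictly semistable models are always of type~II; so $x$ stays in the interior of an edge and the kink persists. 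Concretely, if $e$ is identified isometrically with a real interval and $\alpha$ is the coordinate of a type~III point, the function $F(t)=\max(t-\alpha,0)$ composed with $\tau_{\YS}$ is lisse and subharmonic, yet $dd^cf$ has a positive atom at $x$; the current $d'd''[f]$ is not trivial there, and its positivity is not ``immediate.'' Note also that you cannot rescue this by extending Proposition~\ref{Lemma currents agree} to type~III points: its proof writes $f$ locally as an $\R$-linear combination of model functions, and by Theorems~\ref{Thm Masse Model fct} and~\ref{Thm Masse Model fct 2} the measure $dd^c$ of a model function is supported at type~II points, so a function with a kink at a type~III point admits no such local decomposition.

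What the paper actually does at a type~III point $x\in S(\YS)$ is a genuinely different argument: the closed annulus $A=\tau_{\YS}^{-1}(e)$ around the edge $e$ containing $x$ is identified with a standard annulus $\trop^{-1}([\val(b),\val(a)])$ in $\G_m^{1,\an}$; subharmonicity of $f$ forces the sum of outgoing slopes of $F$ at $x$ to be nonnegative (Proposition~\ref{Prop lisse subh}), so the induced piecewise affine function $\psi$ on $\R$ is \emph{convex}; one then approximates $\psi$ uniformly by smooth convex functions $\psi_i$, pulls back $\psi_i\circ\trop$ (smooth psh by Proposition~\ref{Prop smooth psh} and Remark~\ref{Pullback Bem}) to get smooth psh functions converging uniformly to $f$ near $x$, and concludes that $f$ is psh there by Lemma~\ref{Lem Curr MA}. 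Some argument of this kind is indispensable, and your proposal is missing it. (A smaller omission: type~IV points also need a word, though there, as at type~I points, $f$ really is constant on a neighborhood, since such points never lie on a skeleton.)
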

\begin{proof}
Note that the property to be psh, i.e.~$d'd''[f]\geq 0$, is a local property by Proposition \ref{Prop psh sheaf}.

If $x$ is of type I or IV, the lisse function $f$ is constant on an open neighborhood $V$ of $x$ in $W$ by the definition of lisse. 
Hence, $d'd''[f]=[d'd''f]=0$ on $V$.

If $x$ is of type II, we have seen in Proposition \ref{Lemma currents agree} that there is an open neighborhood $V$ of $x$ in $W$ such that $d'd''[f]=[dd^cf]$ on $V$.
Since $f$ is subharmonic, the measure $dd^cf$ on $W$ is non-negative by Proposition \ref{Prop lisse subh}, and so we have $d'd''[f]=[dd^cf]\geq0$ on $V$. 
  
  If $x$ is of type III, we choose a strictly affinoid domain $Y$ that contains $x$ in its interior. 
  Since $f$ is lisse, there is a  strictly semistable model $\YS$ of $Y$ with corresponding skeleton $S(\YS)$ and a piecewise affine function $F$ on $S(\YS)$ such that $f=F\circ \tau_{\YS}$ on $Y$.
  
  If $x$ is not contained in $S(\YS)$, then there is as in the first case an open neighborhood $V$ of $x$ in $W$ on which $f$ is constant,
  and so $d'd''[f]=[d'd''f]=0$ on $V$.
  
  If $x\in S(\YS)$, then there is an edge $e$ in $S(\YS)$ such that $x$ lies in the interior of $e$.
  The closed annulus $A:=\tau_{\YS}^{-1}(e)$ is isomorphic to a closed annulus $A'=\trop^{-1}([\val(b),\val(a)])$ in $\G_m^{1,\an}$ for some $a,b\in K^\times$ with $|a|< |b|$ and $\trop:= -\log|T|$.
  Thus, we can identify  $e$ with the real interval $[\val(b),\val(a)]$  via $\trop\circ \Phi$ for a fixed isomorphism $\Phi\colon A\xrightarrow{\sim} A'$.
  Since $e$ is isometric to $[\val(b),\val(a)]$, we can define a function $$\psi\colon [\val(b),\val(a)]\to \R;~ z\mapsto F((\trop\circ \Phi)^{-1}(z)).$$ 
  Then $\psi$ extends to a piecewise affine function on $\trop(\G^{1,\an}_m)=\R$, and its restriction to the connected components of $\R\backslash\{\trop(\Phi(x))\}$ is affine with outgoing slopes at $\trop(\Phi(x))$ equal to the ones of $F$ at $x$ on $e$.
  We have required that $f$ is subharmonic, so the sum of the outgoing slopes at $x$ is greater than or equal to zero by Proposition \ref{Prop lisse subh}. 
  Hence, $\psi$ is convex on $\R$, and so we can find smooth convex functions $\psi_i$ on $\R$ converging uniformly to $\psi$. 
  Then $\psi_i\circ \trop$ are smooth psh functions on $\trop^{-1}(\val(b),\val(a))\subset \G_m^{1,\an}$ by Proposition \ref{Prop smooth psh}.
  By Remark \ref{Pullback Bem}, the pullbacks $f_i:=\Phi^*(\psi_i\circ \trop)$  are smooth psh function on $V':=\tau_{\YS}^{-1}(e^\circ)$ converging uniformly to $f$ on $V'$. 
  Note that by \cite[Lemma 2.13 \& 3.8]{BPR2}, the map $\trop=-\log|T|$ factors through $\tau_A$  and $\tau_A=\tau_{\YS}$ on $A$.
  Hence,  the function $f$ is itself psh on $V'$  by Lemma \ref{Lem Curr MA}, i.e.~$d'd''[f]\geq 0$ on $V'$.  
\end{proof}

\begin{prop}\label{Prop lokal int}
	Let $W$ be an open subset of $\Xan$ and $f\colon W\to [-\infty,\infty)$ be a subharmonic function, then $f$ is locally integrable.
\end{prop}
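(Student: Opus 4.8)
The plan is to unwind the definition of local integrability: since $X$ is a curve we have $n=1$, so I must show that $f$ is $\mu_\alpha$-integrable for every $\alpha\in\AS^{1,1}_c(W)$. By Proposition \ref{Prop Radon} each such $\mu_\alpha$ is a signed Radon measure with compact support and $|\mu_\alpha|(W)<\infty$. The first step is to record the shape of these measures on a curve: because $\trop_U$ is proper and the integration map on $(1,1)$-forms factors through the tropical graph $\Trop(U)$, the measure $\mu_\alpha$ is supported on the finite skeleton $S$ attached to a tropical chart covering $\supp(\alpha)$, and on each edge it is absolutely continuous with respect to the length measure $\ell$, with smooth, compactly supported density $\phi$. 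Consequently $|\mu_\alpha|$ has density $|\phi|$ against $\ell$ and is carried by finitely many closed edges $e_1,\dots,e_m$ with $\tau_{\YS}^{-1}(e_k)\subseteq W$. It therefore suffices to prove that $f$ is bounded on $\supp(\mu_\alpha)\subseteq e_1\cup\dots\cup e_m$, for then $\int_W|f|\,d|\mu_\alpha|\le\bigl(\sup_{\supp(\mu_\alpha)}|f|\bigr)\,|\mu_\alpha|(W)<\infty$.

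Boundedness from above is immediate: $f$ is upper semi-continuous and finite-valued, hence bounded above on the compact set $\supp(\mu_\alpha)$. For a lower bound I would argue that the restriction of $f$ to each edge is convex. Fix an edge $e$ and identify $\tau_{\YS}^{-1}(e)$ with an annulus $\trop^{-1}([a,b])\subseteq\G_m^{1,\an}$, so that $e$ becomes the interval $[a,b]$ and the harmonic functions on $e$ are exactly the affine ones. For any two points $s<t$ in the dense set of type II/III points of $e$ at which $f$ is finite, the sub-annulus $\tau_{\YS}^{-1}([s,t])$ is a strictly affinoid domain with Berkovich boundary $\{s,t\}$; applying the defining comparison property of subharmonicity to the affine (hence harmonic) function $h$ interpolating the values $f(s),f(t)$ gives $f\le h$ on $[s,t]$. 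This is precisely the statement that $f|_e$ lies below its chords, i.e.\ that $f|_e$ is convex. A convex function that is not identically $-\infty$ is finite and continuous on the relative interior of $e$; a blow-up to $+\infty$ at an endpoint is excluded because $f$ is finite-valued and upper semi-continuous, while a blow-up to $-\infty$ is excluded by convexity. Hence $f|_e$ is bounded on the closed edge $e$.

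Combining the two steps, $f$ is bounded on $\supp(\mu_\alpha)$, and since $\mu_\alpha$ may be signed I pass to $|\mu_\alpha|$, whose density $|\phi|$ is still bounded with compact support; the claimed integrability follows. I expect the main obstacle to be the lower bound, and more precisely the verification that $f$ does not degenerate to $-\infty$ identically on an edge: this rests on the fact that a subharmonic function, being $\not\equiv-\infty$ on its connected component, is finite at the type II and III points of the skeleton, so that convexity of $f|_e$ actually forces finiteness on the relative interior of $e$ rather than the degenerate case $f|_e\equiv-\infty$. Once this finiteness input is secured, the chord argument and the measure-theoretic bookkeeping above are routine.
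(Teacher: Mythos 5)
Your proof has the same architecture as the paper's --- both reduce to showing that $\mu_\alpha$ is carried by a finite metric graph inside $W$ and that $f$ is bounded there --- but you diverge at the boundedness step, and that is where the problem sits. For the support statement, the paper writes $\omega=\trop_U^*\omega_{\trop}$ on a single tropical chart (\cite[Proposition 5.13]{Gubler}) and chooses a strictly semistable model $\XS$ adapted to that chart, so that $\trop_U$ factors through the retraction $\tau_{\XS}$ (via \cite[Lemma 2.13 \& 3.8]{BPR2}); your description of $\mu_\alpha$ as a measure with smooth density along the edges of a skeleton is the same fact, asserted rather than derived, and is fine. For boundedness of $f$, the paper simply cites Thuillier: the restriction of a subharmonic function to a skeleton is continuous (\cite[Proposition 3.4.6]{Th}), hence bounded on the compact support. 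You instead give an elementary two-sided bound: above by upper semi-continuity, below by convexity of $f$ along edges, proved by comparison with affine harmonic functions on sub-annuli. The chord argument itself is sound, with one small repair: the endpoints $s,t$ must be taken of type II, since a closed annulus with a type III endpoint is affinoid but not \emph{strictly} affinoid, so the defining comparison property of subharmonicity does not apply to it (type II points remain dense, so nothing is lost).

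The genuine gap is the point you yourself flag and then wave through: the finiteness of $f$ at the type II points of the skeleton. You say this ``rests on the fact'' that a subharmonic function which is $\not\equiv-\infty$ on a component is finite at its type II and III points --- but that implication is a theorem, not a formality, and it carries the entire weight of the proposition. Indeed, if $f(x)=-\infty$ at a single type II point $x$, then comparing $f$ on an adjacent strictly affinoid annulus with affine harmonic majorants whose value at $x$ tends to $-\infty$ forces $f\equiv-\infty$ on the adjacent open segments; your chord inequalities then produce no lower bound at all, and $f$ would genuinely fail to be integrable against a form with positive density there. So this step must either be cited --- it is exactly what \cite[Proposition 3.4.6]{Th}, the result the paper invokes, provides --- or proved, e.g.~by pushing the propagation just described through the whole component: if $f=-\infty$ at one type II point, it is $-\infty$ at every type II point of the component, and then comparison with constant harmonic functions on strictly affinoid neighborhoods whose Shilov boundary consists of such points gives $f\equiv-\infty$, contradicting the definition of subharmonic. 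With that input supplied, your argument closes and is a legitimately more self-contained alternative to the paper's appeal to Thuillier's continuity theorem.
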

\begin{proof}
We have to show that $\int_W f\wedge\omega$ is finite for every $\omega\in\AS_c^{1,1}(W)$.
 By \cite[Proposition 5.13]{Gubler}, we may assume that
 the  $(1,1)$-form $\omega$ is of the form $\omega=\trop_U^*\omega_{\trop}$ for a tropical chart $(V,\varphi_U)$ of $W$ and a form $\omega_{\trop}\in \AS^{1,1}_c(\trop_U(V))$.
The closed embedding $\varphi_U$ is given by $\gamma_1,\ldots,\gamma_r\in \OS(U)^\times$, and we denote by $H$ the set of zeros and poles of $\gamma_1,\ldots,\gamma_r$ on $X$. 
We choose a strictly semistable model $\XS$ of $X$ such that the type I points of $H$ lie in distinct connected components of $\Xan\backslash S_0(\XS)$ (see \cite[Theorem 4.11 \& 5.2]{BPR2}), where $S_0(\XS)$ denotes the vertices of the skeleton $S(\XS)$ corresponding to $\XS$.
By \cite[Lemma 2.13 \& 3.8]{BPR2}, we have the following commutative diagram 
\begin{align*}
\begin{xy}
\xymatrix{V\ar[r]^{\trop_U}\ar[d]_{\tau_{\XS}}& \Omega\\
	  S(\XS) \ar[ur]_{\trop_U}&  }
\end{xy}
\end{align*}
where $\tau_{\XS}\colon\Xan\to S(\XS)$ is the retraction map corresponding to $\XS$ and $\Omega:=\trop_U(V)$.
The retraction map $\tau_{\XS}$ is defined in such a way (see \cite[Lemma 3.4 \& Definition 3.7]{BPR2})
 that every connected component of $\Xan\backslash S(\XS)$ is retracted to a single point in $S(\XS)$.
Due to this fact and the commutativity of the diagram, the form
 $\omega=\trop_U^*\omega_{\trop}$ is supported on $S(\XS)$. 
Since the restriction of the subharmonic function $f$ to $S(\XS)$ is continuous by \cite[Proposition 3.4.6]{Th}, we get 
$$\int_{W}f\wedge \omega= \int_{W}(f\circ\tau_{\XS})\wedge\omega$$ 
is finite.
Hence, $f$ is locally integrable.
\end{proof}

\begin{Thm}\label{Korollar Th impl. CLD}
	Let $W$ be an open subset of $\Xan$ and $f\colon W\to [-\infty,\infty)$ be a subharmonic function, then $f$ is psh.
\end{Thm}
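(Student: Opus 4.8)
The plan is to reduce to a local statement and then pass to the limit in Thuillier's monotone regularization, exactly as indicated in the introduction. Since the psh functions form a sheaf (Proposition \ref{Prop psh sheaf}), it suffices to prove that $f$ is psh on a neighbourhood of each point of $W$. So I would fix $x\in W$ and choose a relatively compact open subset $W'$ of $W$ containing $x$. By Proposition \ref{Prop lokal int} the function $f$ is locally integrable, so $f$ is usc and the current $d'd''[f]$ is defined on $W'$; it remains only to check its positivity. As $X$ is a curve we have $n=1$, so a positive form of bidegree $(n-1,n-1)=(0,0)$ with compact support is just a non-negative function $\omega\in C_c^\infty(W')$, and the task reduces to showing
\[
d'd''[f](\omega)=\int_{W'}f\wedge d'd''\omega\ge 0 \quad\text{for every such }\omega.
\]

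Next I would bring in the regularization. By Proposition \ref{Prop Net} there is a decreasing net $\langle f_j\rangle$ of lisse subharmonic functions converging pointwise to $f$ on $W'$, and by Proposition \ref{Thm lisse} each $f_j$ is psh. Writing $\mu_{d'd''\omega}$ for the signed Radon measure of compact support associated to the $(1,1)$-form $d'd''\omega$ by Proposition \ref{Prop Radon}, positivity of each $d'd''[f_j]$ gives
\[
d'd''[f_j](\omega)=\int_{W'}f_j\wedge d'd''\omega=\int_{W'}f_j~d\mu_{d'd''\omega}\ge 0 .
\]
The goal is therefore to show that $\int_{W'}f_j~d\mu_{d'd''\omega}$ converges to $\int_{W'}f~d\mu_{d'd''\omega}=d'd''[f](\omega)$, which then forces the latter to be non-negative and finishes the argument.

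This convergence is the main obstacle, and the difficulty is twofold: the measure $\mu_{d'd''\omega}$ is \emph{signed}, and $\langle f_j\rangle$ is only a \emph{net}, so ordinary (sequential) monotone convergence does not apply directly. I would handle this by taking the Jordan decomposition $\mu_{d'd''\omega}=\mu^{+}-\mu^{-}$ into positive Radon measures supported in the compact set $\supp(\omega)\subset W'$, and treating each part separately. The functions $f_j$ are continuous, the net is decreasing with pointwise infimum the upper semi-continuous function $f$, and $f$ is $\mu^{\pm}$-integrable by local integrability, so $\int f_j~d\mu^{\pm}$ is a decreasing net of real numbers bounded below by $\int f~d\mu^{\pm}$. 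Here the continuity of the $f_j$ is essential: given any continuous $h\ge f$, the truncations $(f_j-h)^{+}$ form a decreasing net of continuous functions on $\supp(\omega)$ with pointwise infimum $0$, hence converge to $0$ \emph{uniformly} by Dini's theorem for nets; approximating $\int f~d\mu^{+}$ from above by $\int h~d\mu^{+}$ over continuous $h\ge f$ then yields $\int f_j~d\mu^{+}\to\int f~d\mu^{+}$, and likewise for $\mu^{-}$.

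Subtracting the two limits gives $\int_{W'}f_j~d\mu_{d'd''\omega}\to\int_{W'}f~d\mu_{d'd''\omega}$, and since every term on the left is non-negative we obtain $d'd''[f](\omega)\ge 0$. As $\omega$ was an arbitrary non-negative element of $C_c^\infty(W')$ and $x$ was arbitrary, $f$ is psh on $W$. The two steps I expect to require the most care are verifying that the support conditions make the Dini argument legitimate (so that everything takes place on the fixed compact set $\supp(\omega)$), and the passage from a net to uniform control, which is precisely where continuity of the lisse approximants $f_j$ is used.
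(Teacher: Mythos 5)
Your proposal is correct, and its skeleton is the paper's: localization via Proposition \ref{Prop psh sheaf}, integrability and upper semi-continuity from Proposition \ref{Prop lokal int}, the decreasing net of lisse subharmonic functions from Proposition \ref{Prop Net}, positivity of each $d'd''[f_j]$ from Proposition \ref{Thm lisse}, and then a limit interchange. Where you genuinely diverge is in how that interchange is organized, which is the heart of the proof. The paper splits the \emph{form}: by \cite[Lemme 5.3.3]{CLD} it writes $d'd''g=\omega^+-\omega^-$ with $\omega^{\pm}$ smooth $(1,1)$-forms whose associated Radon measures are non-negative, multiplies by a cutoff $\eta$ with $\eta\equiv 1$ on $\supp(g)$ (from \cite[Corollaire 3.3.4]{CLD}) to gain compact support, and then cites \cite[Lemma 2.23]{BFJ15} for the convergence of $\int f_j\wedge\eta\omega^{\pm}$ along the net. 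You split the \emph{measure} instead: the Jordan decomposition $\mu_{d'd''\omega}=\mu^+-\mu^-$ has parts that are automatically positive Radon measures supported in $\supp(\omega)$, so no cutoff construction is needed, and you prove the net convergence yourself via Dini's theorem for nets applied to $(f_j-h)^+$, followed by an infimum over continuous majorants $h\geq f$. Both routes work, and yours is more elementary and self-contained on the form/cutoff side. Two points you should make explicit to close the argument: first, the identity $\int_K f\,d\mu^{\pm}=\inf\{\int_K h\,d\mu^{\pm}\colon h\in C(K),\ h\geq f\}$ for the upper semi-continuous function $f$ is precisely the regularity property of Radon measures that makes monotone convergence along nets valid (it fails for general Borel measures), so it deserves a citation or a proof --- it is essentially the content of the lemma the paper quotes from \cite{BFJ15}, which is why the two proofs are ultimately of comparable depth; second, the final subtraction is legitimate because local integrability in the paper's sense means integrability against the total variation $|\mu_{d'd''\omega}|$, so both $\int f\,d\mu^{+}$ and $\int f\,d\mu^{-}$ are finite.
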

\begin{proof} 
	We already know by Proposition \ref{Prop lokal int} that $f$ is locally integrable, so it remains to show that $d'd''[f]$ is non-negative, which is also a local property by Proposition \ref{Prop psh sheaf}.
	Since $\Xan$ is a locally compact Hausdorff space, we can find for every $x\in W$ a relatively compact neighborhood $W'$ of $x$ in $W$. 
	By  Proposition \ref{Prop Net}, there is a decreasing net $\langle f_j\rangle$ of lisse subharmonic functions converging pointwise to $f$ on $W'$.
	Consider a non-negative function $g\in C^\infty_c(W')$. 
	Then there are smooth forms $\omega^+,\omega^-\in \AS^{1,1}(\Xan)$ such that $d'd'' g= \omega^+-\omega^-$ and the corresponding signed Radon measures from Proposition \ref{Prop Radon}  are non-negative \cite[Lemme 5.3.3]{CLD}. 
	By \cite[Corollaire 3.3.4]{CLD}, we can find a smooth non-negative function $\eta\in C^\infty_c(W')$ such that $\eta\equiv 1$ on $\supp(g)$, and hence $$d'd'' g= \eta d'd'' g= \eta \omega^+-\eta\omega^-$$ on $W'$. 
	The smooth $(1,1)$-forms $\eta\omega^{\pm}$ are contained in $\AS_c^{1,1}(W')$ and the corresponding Radon measures are still non-negative and have compact support by Proposition \ref{Prop Radon}.
    Thus, $\int_{W'}f\wedge \eta\omega^{\pm}$ is finite (see Proposition \ref{Prop lokal int}), and we have 
   $$\int_{W'}f\wedge \eta\omega^{\pm}=\lim\limits_{j}\int_{W'}f_j\wedge \eta\omega^{\pm}$$
    by  \cite[Lemma 2.23]{BFJ15}.
    Together, we get
	\begin{align*}
d'd''[f](g)&= \int_{W'}f\wedge \eta d'd''g = \int_{W'}f\wedge \eta\omega^{+} - \int_{W'}f\wedge \eta\omega^{-}\\
&= \lim\limits_{j} (\int_{W'}f_j\wedge \eta\omega^{+} - \int_{W'}f_j\wedge \eta\omega^{-})  = \lim\limits_{j}\int_{W'} f_j\wedge d'd''g. 
	\end{align*}  
	By Proposition \ref{Thm lisse}, we know that  $\int_{W'} f_j\wedge d'd''g \geq 0$ for every $f_j$ in the net, and hence $d'd''[f](g)\geq 0$.
\end{proof}

\subsection{Continuous subharmonic functions in the sense of Chambert-Loir and Ducros are subharmonic in the sense of Thuillier}
In this subsection, we prove that every continuous psh function is subharmonic in the sense of Thuillier. 
The key tool of the proof is the coincidence of the Monge-Amp\`ere measure and the measure $dd^c g$ for model functions (see Theorem \ref{Thm Masse Model fct 2}).

\begin{lem}\label{Lemma Modelfct.}
Let $W$ be an open subset of $\Xan$ and $f\colon W\to \R$ be a continuous function.
If $f$ is not subharmonic, then there is a 
strictly semistable formal model $(\XS,\LS)$ of $(X,\OS_X)$ such that the model function $g:=-\log \Vert1\Vert_{\LS}$  satisfies the following properties: 
\begin{enumerate}
\item $\supp(g)$ is a connected strictly affinoid domain $Y\subset W$;
\item $g$ is strictly positive on $Y\backslash \partial Y$;
\item  $$\int_W f dd^c g< 0.$$ 
\end{enumerate}
\end{lem}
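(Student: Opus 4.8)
The strategy is to read off the three properties from a suitable positive integer multiple of one of the Green functions $g^Y_x$ from the Proposition preceding Corollary \ref{Kor Greenfct}, the whole point being to arrange that this multiple is an honest model function. Applying Corollary \ref{Kor Greenfct}, the failure of $f$ to be subharmonic yields a connected strictly affinoid domain $Y\subset W$ and an interior point $x$ of type II or III with $\int_W f\, dd^c g^Y_x<0$. By its defining properties $g^Y_x$ is non-negative with $\supp(g^Y_x)=Y$, strictly positive on $Y\setminus\partial Y$ and zero on $\partial Y\cup(\Xan\setminus Y)$; hence conditions (i) and (ii) already hold for $g^Y_x$, and (iii) is exactly the inequality just obtained. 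It remains only to promote the lisse function $g^Y_x$ to a model function without destroying these properties.

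The key step is to arrange rational combinatorial data. Recall from the proof of Corollary \ref{Kor Greenfct} that $\int_W f\, dd^c g^Y_x=h(x)-f(x)$, where $h$ is the harmonic function on $Y$ agreeing with $f$ on $\partial Y$; this depends continuously on $x$ and on $\partial Y$ through the Dirichlet problem. Since the condition $dd^c f\ge 0$ is pointwise in nature, I would first reduce to the case in which $Y$ is a \emph{star domain} $\tau_{\XS}^{-1}(\Gamma)$ for a strictly semistable model $\XS$, where $\Gamma\subset S(\XS)$ is a star with centre $x$, legs $[x,y_i]$ of lengths $\ell_i$ and leaves $\partial Y=\{y_i\}$; the defect then reads $\int_W f\, dd^c g^Y_x=\frac{\sum_i f(y_i)/\ell_i}{\sum_i 1/\ell_i}-f(x)<0$. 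As $\val(K^\times)$ is divisible and contains $\Q$ and the type II points are dense, I would move $x$ to a type II point and place each $y_i$ at a rational distance $\ell_i$ from $x$ along its tangent direction; by continuity of the displayed expression in the $f(y_i)$ and the $\ell_i$ the strict inequality persists. With all $\ell_i$ rational, the combinatorial Green function $G^Y_x$ (so that $g^Y_x=G^Y_x\circ\tau_{\XS}$) has rational slopes $-1/(\ell_i\sum_j 1/\ell_j)$ and rational peak value $1/\sum_j 1/\ell_j$.

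Finally I would clear denominators. Choosing a positive integer $N$ for which all slopes of $N G^Y_x$ are integers (the peak value $N/\sum_j 1/\ell_j$ then lies in $\val(K^\times)$ by divisibility), the function $g:=N\,g^Y_x$ is piecewise linear with integer slopes on a $G$-covering of $\Xan$, hence a model function $g=-\log\Vert 1\Vert_{\LS}$ for a strictly semistable formal model $(\XS,\LS)$ of $(X,\OS_X)$ by \cite[Proposition 8.11]{GK1}, strict semistability being available because $K$ is algebraically closed. As $N>0$, the function $g$ retains the support and positivity of $g^Y_x$, so $\supp(g)=Y$ is a connected strictly affinoid domain in $W$ with $g>0$ on $Y\setminus\partial Y$, and $\int_W f\, dd^c g=N\int_W f\, dd^c g^Y_x<0$. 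This is precisely (i)--(iii).

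I expect the genuine difficulty to be the second paragraph: transferring the failure of subharmonicity, which Corollary \ref{Kor Greenfct} delivers on an arbitrary affinoid and which may stem from the branching behaviour at a vertex rather than from non-convexity along a single edge, onto a star domain with type II centre and rational legs while preserving strictness. This forces one to control the harmonic extension $h(x)$ under simultaneous perturbation of $x$ and of the finitely many boundary points, exploiting both the density of type II points and the divisibility of $\val(K^\times)$; it is exactly this rationality that legitimises the integer scaling turning the lisse Green function into a model function.
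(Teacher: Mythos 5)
Your overall strategy---perturb the combinatorial data until it is rational, then invoke \cite[Proposition 8.11]{GK1} and clear denominators to turn the lisse Green function into a model function---is exactly the spirit of the paper's proof, and your last two steps (rational legs give rational slopes and $\val(K^\times)$-values, scaling by $N$ preserves (i)--(iii)) are fine. The genuine gap is the one you yourself flag: the reduction of the failure of subharmonicity to a \emph{star domain} with type II centre. Corollary \ref{Kor Greenfct} only produces an arbitrary connected strictly affinoid domain $Y$ and an interior point $x$, and nothing in the paper lets you replace $Y$ by a star $\tau_{\XS}^{-1}(\Gamma)$ while keeping $\int_W f\,dd^c g^Y_x<0$. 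Your appeal to ``$dd^cf\geq 0$ is pointwise in nature'' is not available ($f$ is merely continuous, so no measure $dd^cf$ exists), and the sheaf property of subharmonic functions only localizes the failure to a small open set, inside which Corollary \ref{Kor Greenfct} again returns an \emph{arbitrary} affinoid, not a star. Making your reduction rigorous amounts to proving that subharmonicity can be tested on star domains, i.e.\ a maximum-principle/propagation argument: one sets $u=f-h$ with $h$ the harmonic extension on $Y$, takes the set where $u$ attains its maximum, and propagates the equality $u=\max u$ along skeleta using stars in all tangent directions, geodesic convexity of affinoids, and the fact that the path-metric topology refines the Berkovich topology, until one reaches $\partial Y$ and gets a contradiction. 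This is a substantive argument (comparable in length to the lemma itself) that your proposal asserts rather than supplies.

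It is worth seeing how the paper sidesteps this entirely: it never changes the domain $Y$. Instead it writes $g^Y_x=G^Y_x\circ\tau_{\YS}$ on the skeleton $S(\YS)$ of the \emph{given} affinoid and perturbs the piecewise affine function $G^Y_x$ itself in three steps---move the support of the negative part of $dd^cG^Y_x$ to a type II point, move the vertex values into $\Q$, and make the slopes rational by flattening $G$ near the endpoints of each edge (introducing new type II vertices at rational distances)---each time using continuity of $f$ to preserve the strict inequality $\int f\,dd^cG<0$. The perturbed $G$ is no longer a Green function of anything, but the lemma does not require it to be; it only requires positivity, vanishing on $\partial Y\cup(\Xan\setminus Y)$, and the negative integral. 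Your approach insists on keeping an exact Green function (of a perturbed star), which is what forces the unproven domain reduction; if you instead perturb the function on the skeleton of the original $Y$, as the paper does, the gap disappears and the rationalization steps you describe go through essentially verbatim.
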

\begin{proof}
    By Corollary \ref{Kor Greenfct}, there is a connected strictly affinoid domain $Y$ and a type II or III point $x$ in $Y\backslash \partial Y$ with  $\int_W f dd^c g^Y_x< 0$.
     Since $g^Y_x$ is lisse, we can find a  strictly semistable formal  model $\YS$ of $Y$ and a piecewise affine function $G^Y_x$ on the corresponding skeleton $S(\YS)$ such that $g^Y_x=G^Y_x\circ \tau_{\YS}$ on $Y$.
	Note that $S(\YS)$ is a closed connected subset of $\Xan$ with the structure of a finite metric graph.
	By blowing up $\YS$, we may assume that there is no edge in $S(\YS)$ whose  endpoints are all contained in the boundary $\partial Y$.
	We will explain in steps, why there is a strictly semistable formal model $(\XS,\LS)$ such that the corresponding model function $g=-\log \Vert 1\Vert _{\LS}$ on $\Xan$ satisfies $g >0$ on $Y\backslash\partial Y$, $g=0$ elsewhere, and $\int_W f dd^c g< 0$.

	\vspace{0.5cm}
	
	\underline{\textbf{0. Step:}} ~Strategy of the proof. 
	
	\vspace{0.3cm}
	
	We construct a $\val(K^\times)$-rational function (see definition below) $G$ on  $S(\YS)$ with the required properties
	\begin{align}\label{Bedingung}
		 G|_{S(\YS)\backslash\partial Y}> 0, ~ G|_{\partial Y}= 0 \text{ and } \int_{ S(\YS)} f dd^c G <0. 
	\end{align}
	Then the function $g:=G\circ \tau_{\YS}$ is piecewise $\Q$-linear in the sense of \cite{GK2} on $Y$.
	Setting $g\equiv 0$ on $\Xan \backslash Y$, we get a well-defined continuous non-negative function $g$ on $\Xan$ and we have a $G$-covering of $\Xan$ on which $g$ is  piecewise $\Q$-linear.
	Thus, there is a semistable formal model $(\XS,\LS)$ such that $g=-\frac{1}{m}\log\Vert 1\Vert _{\LS}$ on $\Xan$ by \cite[Proposition 8.11\& 8.13]{GK1}.
	Note that we always may assume $\XS$ to be strictly semistable  by the semistable reduction Theorem \cite[Ch.~7]{BL} since $K$ is algebraically closed.
	 Then the formal model $(\XS,\LS^{\otimes m})$ of $(X,\OS_X)$ gives the claim.

	Before we start with the construction, note that a $\val(K^\times)$-rational function is a piecewise affine function on  $S(\YS)$ (we refine the vertex set such that $G$ is affine on every edge) such that the following properties are satisfied
	\begin{enumerate}[label=(\alph*)]
		\item $dd^cG$ is only supported on points of type II.
		\item $G$ has values in $\val(K^\times)$ at every vertex of  $S(\YS)$. 
		\item $G$ has rational slopes.
	\end{enumerate}

	 \vspace{0.5cm}
	\begin{tabular}{l l}
		\underline{\textbf{1. Step:}}& Replace $G^Y_x$ by a piecewise affine function $G'$  such that $(\ref{Bedingung})$ is still\\ &  satisfied and  (a) additionally  holds. 
	\end{tabular}
	\vspace{0.3cm}
	
	If $x$ is of type II, the support of $dd^c G^Y_x$  consists only of type II points. 	
	If $x$ is of type III, we use that the points of type II are dense in $\Xan$. 
	Let $e=[y_1,y_2]$ be the edge of  $S(\YS)$ having $x$ in its interior.  
	Let $x_n$ be a sequence of type II points in $e$ converging to $x$ with respect to the skeletal metric. 
	Consider the piecewise affine functions $G_n$ on  $S(\YS)$ that are given by the affine function on $[y_1,x_n]$ (resp.~on $[x_n,y_2]$) connecting the points $G^Y_x(y_1)$ and $G^Y_x(x_n)$ (resp.~$G^Y_x(x_n)$ and $G^Y_x(y_2)$) and $G_n\equiv G^Y_x$ on $S(\YS)\backslash e$. 
	It is easy to see that the slopes converge to the ones of $G^Y_x$, i.e.~$dd^c G_n\to dd^c G^Y_x$ for $n\to \infty$. 
	Furthermore, $f$ is continuous, so we can find $n$ big enough such that $|f(x_n)dd^c G_n (x_n)- f(x)dd^c G^Y_x (x)|$ is so small that we still have $\int_ {S(\YS)} f dd^c G_n <0$ (cf.~\cite[Proposition 3.3.4]{Th}). 
	Set $G':=G_n$ for such an $n$. 
	
	 \vspace{0.5cm}
\begin{tabular}{l l}
	\underline{\textbf{2. Step:}}& Replace $G'$ by a piecewise affine function $G''$  such that $(\ref{Bedingung})$ and (a)\\ & are still satisfied and  (b) additionally holds. 
\end{tabular}
\vspace{0.3cm}

	Due to normalizing the absolute value $|~|$, we assume that $\Q$ is contained, and so dense, in $\val(K^\times)$.
	Let $z$ be a vertex of  $S(\YS)$. If $G'(z)$ is not in $\Q$ and $(a_n)_n$ is a sequence of rational points converging to $G'(z)$, then the slopes of the piecewise affine functions $G'_n$ on  $S(\YS)$ resulting by replacing $G'(z)$ by $a_n$ converge to the slopes of $G'$. 
	Thus, we can choose an $n\in \N$ such that $G'':=G_n'$ still satisfies $(\ref{Bedingung})$ and  takes only values in $\Q$ at every vertex of  $S(\YS)$.
	We choose these values such that $G''$ is still strictly positive on  $S(\YS)\backslash \partial Y$.

	 \vspace{0.5cm}
\begin{tabular}{l l}
	\underline{\textbf{3. Step:}}& Replace $G''$ by a piecewise affine function $G$  such that $(\ref{Bedingung})$, (a) and (b)\\ & are still satisfied and  (c) additionally holds. 
\end{tabular}
\vspace{0.3cm}
	
	Now, consider an edge $e=[y_1,y_2]$ of  $S(\YS)$. 
	Denote by $d(y_1,y_2)$ the distance between these two points with respect to the skeletal metric. 
	If $d(y_1,y_2)\in \Q$, we are done. 
	If not, we can find points $y_1'$ and $y_2'$ of type II in $e$ with  distance $d(y_1',y_2')\in \Q$ arbitrary close to $d(y_1,y_2)$.
	These points are chosen so that we can decompose the edge $e$ into $[y_1,y_1']\cup [y'_1,y'_2]\cup [y'_2,y_2]$.
	Let $G$ be the piecewise affine function on  $e$ defined by the data  $G\equiv G''(y_1)$ on $[y_1,y_1']$, $G\equiv G''(y_2)$ on $[y'_2,y_2]$ and $G|_{[y_1',y_2']}$ is affine (see Figure 1). Since $G(y'_i)=G''(y_i)\in \Q$, the constructed function $G$ on $e$ has rational slopes. 
    Choose $y'_i$ with $d(y_1',y_2')$ close enough to $d(y_1,y_2)$ such that $(\ref{Bedingung})$ is still satisfied.
    Note that this is possible since $f$ is continuous (cf.~\cite[Proposition 3.3.4]{Th}).

	We do this for all edges except the ones containing the boundary points $\partial Y$, where we just move the other vertex. 
	Then the function $G$ on $S(\YS)$ has slopes in $\Q$ and is consequently the required function.	
	
	 \vspace{1 cm}
	
	\begin{tikzpicture}
	\draw[blue, ultra thick,-]  (11,15) node[blue] {\textbullet}node [blue, below = 5]{$y_0$} -- (12,15) node[blue] {\textbullet}node [blue, above = 5]{$\tilde{y}_0$};
	\draw[blue, ultra thick,-]  (12,15) node[blue] {\textbullet}node [blue, below = 5]{} -- (15,15) node[blue] {\textbullet}node [blue, above = 5]{$\tilde{y}_1$};
		\draw[blue, ultra thick,-]  (15,15) node[blue] {\textbullet}node [blue, below = 5]{} -- (16,15) node[blue] {\textbullet}node [blue, below = 5]{$y_1$};
		\draw[blue, ultra thick,-]  (16,15) node[blue] {\textbullet} -- (17,15) node[blue] {\textbullet}node [blue, above = 5]{$y'_1$};	
	\draw[blue, ultra thick,-]  (17,15) node[blue] {\textbullet} -- (20,15) node[blue] {\textbullet}node [blue, above = 5]{$y'_2$};
		\draw[blue, ultra thick,-]  (20,15) node[blue] {\textbullet} -- (21,15) node[blue] {\textbullet}node [blue, below = 5]{$y_2$};
	\draw(9, 14) node[blue] {$~$};
	\draw(18.6, 13.5) node[blue] {$e$};
	\draw(11, 17) -- (12, 17);
	\draw(12, 17) -- (15, 18);
	\draw(15, 18) -- (17, 18);
	\draw(17, 18) -- (20, 16);
		\draw(20, 16) -- (21, 16);
	\draw(21, 16.5) node[] {$G''(y_2)$};
	\draw(16, 18.5) node[] {$G''(y_1)$};
		\draw(16, 18) node[] {$|$};
				\draw(11, 17) node[] {$|$};
		\draw(11, 17.5) node[] {$G''(y_1)$};
					\draw(21, 16) node[] {$|$};
	\draw(13.5, 13.5) node[blue] {$\tilde{e}$};	
	\draw[decorate,decoration={brace, mirror}, yshift=-4ex] (12, 15.3) -- node[below=0.4ex] {$\in \Q$} (15,15.3);	
	\draw[decorate,decoration={brace, mirror}, yshift=-4ex] (17, 15.3) -- node[below=0.4ex] {$\in \Q$} (20,15.3);		
	\end{tikzpicture}
	\begin{center}
		~~~~FIGURE 1
	\end{center}
\end{proof}

\begin{Thm}\label{folgt subh}
 	 	Let $W$ be an open subset of $\Xan$ and $f\colon W\to \R$ be a continuous function. If $f$ is psh, then $f$ is subharmonic.
 \end{Thm}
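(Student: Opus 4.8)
The plan is to argue by contradiction, turning the failure of subharmonicity into a quantitative obstruction that the positivity of the current $d'd''[f]$ cannot tolerate. Suppose $f$ is psh but not subharmonic. Then Lemma \ref{Lemma Modelfct.} furnishes a strictly semistable formal model $(\XS,\LS)$ of $(X,\OS_X)$ whose model function $g=-\log\Vert 1\Vert_{\LS}$ is non-negative, is supported on a connected strictly affinoid domain $Y\subset W$, is strictly positive on $Y\setminus\partial Y$, vanishes on $\partial Y\cup(\Xan\setminus Y)$, and satisfies $\int_W f\,dd^cg<0$. Since $g$ is a model function on the smooth proper curve $X$, Theorem \ref{Thm Masse Model fct 2} identifies $\text{MA}(g)=dd^cg$, so the obstruction reads
\[
\int_W f\,d\text{MA}(g)=\int_W f\,dd^cg<0 .
\]

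My goal would then be to contradict this strict inequality. The crucial reduction is to produce a sequence $(g_k)_{k\in\N}$ of \emph{non-negative} smooth functions $g_k\in C_c^\infty(W)$, all supported in one fixed compact neighborhood of $Y$ inside $W$, such that
\[
\int_W f\,d\text{MA}(g)=\lim_{k\to\infty}\int_W f\wedge d'd''g_k .
\]
Granting such a sequence, the argument closes immediately: on a curve $n=\dim X=1$, so a non-negative smooth function is exactly a positive $(0,0)=(n-1,n-1)$-form, and the positivity of $d'd''[f]$ from Definition \ref{Def psh} gives $\int_W f\wedge d'd''g_k=d'd''[f](g_k)\ge 0$ for every $k$. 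Letting $k\to\infty$ would force $\int_W f\,d\text{MA}(g)\ge 0$, contradicting the displayed inequality; hence no such $g$ can exist and $f$ is subharmonic.

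First I would set up the construction of the $g_k$, which is the technical core and is modelled on \cite[Proposition 6.3.2]{CLD}. Writing $g=G\circ\tau_{\YS}$ on $Y$ for the piecewise affine function $G$ on the finite graph $S(\YS)$ provided by Lemma \ref{Lemma Modelfct.}, I note that $G$ is affine on the interior of each edge, so $d'd''g=0$ there and the whole measure $dd^cg=dd^cG$ is carried by the (type II) vertices. The idea is to regularize $G$ corner by corner inside tropical charts around the vertices: near a vertex one identifies a neighborhood in $S(\YS)$ with a star of segments in a tropical variety, mollifies the piecewise affine profile against a non-negative bump in the tropical coordinates, and pulls back by $\trop_U$ to get a function smooth in the sense of Chambert--Loir--Ducros. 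Away from the vertices $g$ is already smooth and harmonic, so the local pieces agree with $g$ there and glue to a global $g_k\in C^\infty(\Xan)$; restricting the regularization to the relevant compact set makes $g_k$ compactly supported in $W$. Because the kernels are non-negative and $g\ge 0$, the $g_k$ stay non-negative, and by construction the smooth measures $d'd''g_k$ (viewed as signed Radon measures through Proposition \ref{Prop Radon}) concentrate onto the point masses of $dd^cG$, so that $d'd''g_k\to dd^cg$ weakly with supports in a fixed compact set. As $f$ is continuous, this weak convergence yields the desired limit $\int_W f\wedge d'd''g_k\to\int_W f\,dd^cg=\int_W f\,d\text{MA}(g)$.

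The hard part will be carrying out this regularization rigorously while meeting three competing demands at once: (i) making each $g_k$ genuinely smooth in the tropical sense and compatible across the overlaps of the charts chosen around the various vertices; (ii) keeping $g_k$ non-negative and compactly supported inside $W$ in spite of the first-order vanishing of $g$ across $\partial Y$ --- here I expect to use that the kink of $g$ at a boundary vertex is convex with positive outgoing slope into $Y$, so its regularization remains $\ge 0$, whereas at interior vertices $g$ is bounded away from $0$ on compacta and the perturbation is small for large $k$; and (iii) guaranteeing the weak convergence $d'd''g_k\to dd^cg$ of the associated measures. Balancing these three requirements, following the regularization scheme of \cite[Proposition 6.3.2]{CLD} adapted to the one-dimensional skeletal picture, is where essentially all the work lies; once the $g_k$ are in hand, the reduction and the final contradiction are routine.
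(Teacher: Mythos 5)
Your overall frame is exactly the paper's own ``Step 0'': argue by contradiction, invoke Lemma \ref{Lemma Modelfct.} and the identity $\mathrm{MA}(g)=dd^cg$ from Theorem \ref{Thm Masse Model fct 2}, and then contradict $\int_W f\,d\mathrm{MA}(g)<0$ by producing non-negative functions $g_k\in C^\infty_c$ supported in a fixed compact subset of $W$ with $\int_W f\wedge d'd''g_k\to\int_W f\,d\mathrm{MA}(g)$; since $n=1$, non-negative smooth compactly supported functions are precisely the positive $(n-1,n-1)$-forms, so positivity of $d'd''[f]$ kills the negative limit. This reduction is correct and identical to the paper's.

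The genuine gap is in the construction of the $g_k$, which you defer and whose proposed mechanism would not work as described. ``Mollifying the piecewise affine profile against a non-negative bump in tropical coordinates'' is not a defined operation: a Chambert-Loir--Ducros smooth function near a type II point must locally be $\psi\circ\trop_U$ for a smooth ambient function $\psi$ on $\R^r$, and a function defined only on a one-dimensional polyhedral star inside $\R^r$ cannot simply be convolved into such a restriction. Concretely, three things are missing. First, to transport $g$ into a chart at all you need a tropical chart around the type II vertex in which $\trop_U$ is injective on the star of $S(\YS)$ and compatible with $\tau_{\YS}$ -- a faithful-tropicalization statement, not a harmless ``identification''. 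Second, the outgoing slopes of any restriction $\psi\circ\trop_U$ along the edges at the vertex are of the form $\langle \nabla\psi,w_i\rangle$ for a single gradient vector, hence satisfy the balancing relation of the tropical curve; so at any vertex carrying mass of $dd^cg$ \emph{no} smooth function can agree with $g$ to first order, every approximant must flatten there, and the asserted weak convergence $d'd''g_k\to dd^cg$ is then a claim that has to be proved (it does hold for a cutoff-type construction $\psi_k=\chi_k\cdot(G-G(x))+G(x)$, by the fundamental theorem of calculus on each edge together with the vanishing of the vertex term for smooth functions, but none of this is in your sketch). Third, that this passage from piecewise affine data on skeleta to smooth functions is delicate is not hypothetical: the paper's closing remark exhibits curves on which harmonic (lisse, $dd^c=0$) functions fail to be smooth. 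This is precisely why the paper avoids local smoothing at type II points altogether: it uses projectivity to write $\LS=\LS_1\otimes\LS_2^{-1}$ with $\LS_i$ very ample, so that $g=g^+-g^-$ with $g^{\pm}$ pullbacks of convex piecewise linear functions of projective coordinates, smooths each $g^{\pm}$ by the regularized maximum of \cite[Proposition 6.3.2]{CLD}, repairs non-negativity and compact support of the difference with the smooth maximum $m_\varepsilon$ against an auxiliary affine function near $\partial Y$ (its Steps 2--6), and obtains the limit identity from the Monge-Amp\`ere machinery (Lemma \ref{Lemma MA(f_i)=d'd''(f_i)}) rather than from weak convergence of measures. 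Until you supply the faithful charts and the explicit vertex construction, the technical core of your proof is absent.
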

\begin{proof} 
The proof is by contradiction. 
We assume that there is a strictly semistable formal model  $(\XS,\LS)$ of $(X,\OS_X)$ such that $\int_W f dd^c g< 0$ for  $g:=-\log\Vert 1\Vert_{\LS}$ as in Lemma \ref{Lemma Modelfct.}.

\vspace{0.5cm}
\underline{\textbf{0. Step:}} Strategy of the proof.
\vspace{0.3cm}

We know that $g=G\circ \tau_{\XS}$ on $\Xan$ for the piecewise affine function $G=g|_{S(\XS)}$ on $S(\XS)$, the Monge-Amp\`ere measure exists and $dd^c g=dd ^c G=\mathrm{MA}(g)$ by Theorem \ref{Thm Masse Model fct 2}. 

Assume that we have the following situation: 
We can write $g=g^+-g^-$ on an open subset $V$ of  $W$ that contains the connected strictly affinoid domain $Y=\supp(g)$ such that 
\begin{enumerate}[label=(\alph*)]
\item $g^{\pm}$ is the uniform limit of smooth psh functions $g^{\pm}_{k}$ on $V$, i.e.~$g$ is globally approximable on $V$,
\item $g_k:=g^+_{k}-g^-_{k}\geq 0$ on $V$, and
\item $g_k$ has compact support on $V$.
\end{enumerate}
Moreover, we want to have a connected open subset $V'$ of $V$ with $Y\subset V'$, $\overline{V'}\subset V$ and $g_k\in C_c^\infty(V')$, and  a continuous map $\eta$ on $V$ such that $\eta\equiv 1$ on $V'$ and $\eta$ has compact support in $V$.

In this situation, we have $\eta f\in C_c^0(V)$, $f=\eta f$ on $V'$ and by the definition of the Monge-Amp\`ere measure and Theorem \ref{Thm Masse Model fct 2}
$$dd^cg=\mathrm{MA}(g)=\mathrm{MA}(g^+) - \mathrm{MA}(g^-)$$
 is a measure with finite support contained in $Y\subset V'\subset V$. Due to our assumption, this implies
\begin{align*}
0> \int_{W}f~ d\mathrm{MA}(g) 
=\int_{V}\eta f~ d\mathrm{MA}(g)
=\int_{V}\eta f~ d\mathrm{MA}(g^+)-\int_{V}\eta f~ d\mathrm{MA}(g^-).
\end{align*}
Applying Lemma  \ref{Lemma MA(f_i)=d'd''(f_i)} to the right hand side and using $g_k\in C^\infty_c(V')$, we get 
\begin{align*}
0> \int_{W}f~ d\mathrm{MA}(g) &
=\lim_{k\to\infty}\int_{V}\eta f\wedge d'd''g^+_{k}-\lim_{k\to\infty}\int_{V}\eta f\wedge d'd''g^-_{k}\\&=\lim_{k\to\infty}\int_{V}\eta f\wedge d'd''g_{k}\\
&=\lim_{k\to\infty}\int_{V'}f\wedge d'd''g_{k}\\
& = \lim_{k\to\infty} d'd''[f|_{V'}](g_k).
\end{align*}
We know that $d'd''[f]$ is positive on $W$, and so it is on $V'$. 
Thus, $$\lim_{k\to\infty} d'd''[f|_{V'}](g_k)\geq 0,$$ and so we have a contradiction.
Hence, $f$ has to be subharmonic.

We explain in several steps how to construct $V$, $V'$, $\eta$  and the functions $g^+_{k}$, $g^-_{k}$ such that one has the described situation. 

\vspace{0.5cm}

\underline{\textbf{1. Step:}} ~~Show that the function $g$ is globally approximable on $W$. 

\vspace{0.3cm}

The curve $X$ is projective, so we may assume $\XS$ to be projective as well and can therefore find very ample line bundles $\LS_1,\LS_2$ such that  $\LS=\LS_1\otimes \LS_2^{-1}$. Thus, we can write $$g=\log\Vert 1\Vert_{\LS}=-\log\Vert s_1\Vert_{\LS_1}+\log\Vert s_2\Vert_{\LS_2}$$
on $W$ for global sections $s_1,s_2$ that coincide on the generic fiber.
Since we may work on every open subset of $W$ containing the compact subset $Y$ by Proposition \ref{Prop psh sheaf}, we may assume that $s_1$ and $s_2$ do not vanish on $W$.
Due to the very ampleness, we can find closed embeddings $\varphi_i\colon \XS\to \mathbb{P}_R^{n_i}$ such that $\LS_i\simeq \varphi_i^* \OS_{\mathbb{P}_R^{n_i}}(1)$ and $s_i=\varphi_i^*x_{j_i}$ for $i=1,2$. 
Here, let $x_0,\ldots,x_{n_i}$ be the coordinates of $\mathbb{P}_R^{n_i}$ and $j_i\in \{0,\ldots,n_i\}$. 
Without loss of generality, we assume $x_{j_1}=x_{n_1}$ and $x_{j_2}=x_{n_2}$.
These closed embeddings induce closed embeddings $\varphi_i\colon X\to \PB_K^{n_i}$ into the toric varieties $\PB_K^{n_i}$.
Then 
\begin{align*}
g&=-\log\Vert s_1\Vert_{\LS_1}+\log\Vert s_2\Vert_{\LS_2}\\
&=((\varphi_1^{\an})^*(-\log |x_{n_1}|+\max_{j\in \{0,\ldots,n_1\}}\log|x_j|)-(\varphi_2^{\an})^*(-\log|x_{n_2}|+\max_{j\in \{0,\ldots,n_2\}}\log|x_j|))
\end{align*}
on $W$.
We  approximate the functions $\phi_i:=-\log |x_{n_i}|+\max_{j\in \{0,\ldots,n_i\}}\log|x_j|$ for $i=1,2$ by smooth convex functions on $\{\eta \in \PB_K^{n_i,\an}\mid|x_{n_i}|_\eta\neq 0\}$ as in \cite[Proposition 6.3.2]{CLD}.

For every $k\in \N_{>0}$ and $n\in \N$, there is a smooth, convex function  $M_{n,\frac{1}{k}}$ on $\R^{n+1}$  that is non-decreasing in every variable and has the following properties
\begin{enumerate}
	\item $\max(t_0,\ldots,t_n)\leq M_{n,\frac{1}{k}}(t_0,\ldots,t_n) \leq \max(t_0,\ldots,t_n) +\frac{1}{k}$;
	\item If $t_l+\frac{2}{k}\leq \max_{j\neq l} t_j$, then $M_{n,\frac{1}{k}}(t_0,\ldots,t_n)=M_{n-1,\frac{1}{k}}(t_0,\ldots,\widehat{t_l},\ldots,t_n)$;
	\item For all $t\in \R$, we have $M_{n,\frac{1}{k}}(t_0+t,\ldots,t_n+t)=M_{n,\frac{1}{k}}(t_0,\ldots,t_n)+t$.
\end{enumerate}

Define for $k\in \N_{>0}$ and $i\in \{1,2\}$ the following function
\begin{align*}
\phi_{i,k}&:=-\log |x_{n_i}|+M_{n_i,\frac{1}{k}}(\log|x_0|,\ldots,\log|x_{n_i}|)
\end{align*}
 on $\PB_K^{n_i,\an}$.
 For every point $\zeta$ in $\{\eta \in \PB_K^{n_i,\an}\mid|x_{n_i}|_\eta\neq 0\}$ there is an open neighborhood of  $\zeta$ such that 
 $\phi_{i,k}$ is smooth and psh on this neighborhood.
 Both are local properties and $s_i$ does not vanish on $W$, so  $(\varphi_i^{\an})^*\phi_{i,k}$ is a smooth psh function on $W$ (see Remark \ref{Pullback Bem})
 converging uniformly to $-\log\Vert s_i\Vert_{\LS_i}$ on $W$.
We use in the following the notations $g^+:= -\log\Vert s_1\Vert_{\LS_1}$ and $g^-:= -\log\Vert s_2\Vert_{\LS_2}$.
 Furthermore, we set $g^+_{k}:=(\varphi_1^{\an})^*\phi_{1,k}$ and $g^-_{k}:=(\varphi_2^{\an})^*\phi_{2,k}$. 
Note that we have by construction  \begin{align*}
	g^{\pm}\leq g_k^\pm\leq g_k^\pm +1/k.
\end{align*}
 
 \vspace{0.2cm}

 These functions do not necessarily satisfy (b) and (c), so  we need to modify $g^{\pm}_{k}$. 
 
 \vspace{0.5cm}
 \begin{tabular}{l l}
 \underline{\textbf{2. Step:}}& Construct a suitable $V$ and study the behavior of  $g^{\pm}_{k}$ outside of $Y$:\\&
 \end{tabular}
 \vspace{0.3cm}
 
 At the end of Step 2, one can find an illustration of the construction in Figure 2.
 The boundary of the strictly affinoid domain $Y$ is a finite set of points of type II.  
 By blowing up our model $\XS$, we may assume that the points $\partial Y$  are vertices in $S(\XS)$. 
 Note that we always may assume that an admissible blowing up is strictly semistable again by the semistable reduction Theorem \cite[Ch.~7]{BL2}. 
 Consider a point $y\in \partial Y$. Since $y$ is of type II, \cite[Corollary 4.27 \& Theorem 4.11]{BPR2} tells us that there are a strictly semistable formal model $\XS_y$ of $X$ and a star-shaped open neighborhood $\Omega_y$ of $y$ in $S(\XS_y)$ such that $\tau_{\XS_y}^{-1}(\Omega_y)$ is an open neighborhood of $y$ in $W$.
 Here, a star-shaped open neighborhood $\Omega_y$ of $y$ in $S(\XS_y)$ is a simply-connected open neighborhood of $y$ in $S(\XS_y)$ such that the intersection of $\Omega_y$ with any edge $e$ in $S(\XS_y)$ emanating from $y$ is a half-open interval $I_{y,e}=[y,x_e)$ with 
 endpoints $y$ and $x_e$ of type II. 
 By blowing up $\XS$ and modifying $\Omega_y$, we may assume that we can find this star-shaped open neighborhood $\Omega_y$ in $S(\XS)$. We explain how to do this. 
 The model $\XS$ has to be blown up such that every vertex of $S(\XS_y)$ is a vertex in the new skeleton $S(\XS)$. Then we can modify $\Omega_y$ in the following way.
 Consider an edge $e$ of the new skeleton $S(\XS)$. Then the interior of $e$ is either contained in an edge $\tilde{e}$ of $S(\XS_y)$  or lies in a connected component of $\Xan\backslash S(\XS_y)$ isomorphic to an open ball. 
 In the first case, we shrink $I_{y,\tilde{e}}$ so that $I_{y,e}$ is a half-open interval in $e$. 
 Note that $\tau_{\XS}^{-1}(I_{y,e})=\tau_{\XS_y}^{-1}(I_{y,e})$, and so it is still contained  in $W$. 
 In the second case, we just add a new half-open interval $I_{y,e}$ to $\Omega_y$.
 Then $\tau_{\XS}^{-1}(I_{y,e})\subset \tau_{\XS_y}^{-1}(y)\subset W$.
 We do this blowing ups and modifications for all boundary points.
 Moreover, we always choose $\Omega_y$ such that $\overline{\Omega_y}\subset W$.
Before we can construct $V$, we have to blow up $\XS$ one more time. 
We find this admissible formal blowing up  $\XS'$ of $\XS$ such that $Y=\YS_\eta$ for a formal open subset $\YS$ of $\XS'$, which is possible by \cite[Lemma 4.4]{BL}. Then we have $Y=\tau_{\XS'}^{-1}(S(\XS')\cap Y)$. 
As described above, we modify $\Omega_y$ such that it is a star-shaped open neighborhood of $y$ in $S(\XS')$.
Altogether, we have a strictly semistable formal model $\XS'$ of $X$ such that every boundary point $y$ of $Y$ has an open neighborhood $\tau_{\XS'}^{-1}(\Omega_y)$ in $W$ for an open star-shaped neighborhood $\Omega_y$ of $y$ in $S(\XS')$ and $Y=\tau_{\XS'}^{-1}(S(\XS')\cap Y)$.

 For every point $z\in (S(\XS')\cap Y)\backslash \partial Y$, we choose some star-shaped open neighborhood $\Omega_z$ of $z$ in $S(\XS')$ with $\Omega_z\subset Y\backslash \partial Y$.
 Then $\tau^{-1}_{\XS'}(\Omega_z)$ is automatically contained in $W$ due to $Y=\tau_{\XS'}^{-1}(S(\XS')\cap Y)$ and $Y\subset W$.
 
We have constructed for every point in $S(\XS')\cap Y$ an open neighborhood of it in $S(\XS')$, $\Omega_y$ for $y\in \partial Y$ and $\Omega_z$ for $z\notin \partial Y$, and so these open subsets clearly cover our compact subset $S(\XS')\cap Y$.
Thus, there is a finite subset $Y_0$ of $S(\XS')\cap Y$  such that $$Y= \tau_{\XS'}^{-1}(S(\XS')\cap Y)\subset \bigcup_{z\in Y_0}\tau_{\XS'}^{-1}(\Omega_z).$$
By construction, the set of boundary points $\partial Y$ is contained in $Y_0$.
Furthermore, we choose $Y_0$ minimal, i.e.~removing one open subset $\tau_{\XS'}^{-1}(\Omega_z)$ from the covering would no longer cover $Y$.
Set $$V:=\bigcup_{z\in Y_0}\tau_{\XS'}^{-1}(\Omega_z),$$ then $V$ is an open subset of $W$ containing $Y=\supp(g)$.
 
 Let $y$ be a point in $\partial Y$, $e$ an edge emanating from $y$ in $ S(\XS')$ not contained in $Y$ and $I_{y,e}$ the corresponding half-open interval in the star-shaped open neighborhood $\Omega_y$.
 Note that $g_{|_e}=0$. 
 We may shrink   the half-open interval $I_{y,e}$ in $e$, and so $\Omega_y$ and $V$, such that
 \begin{align*}
 g^+&=	(\varphi_1^{\an})^*(-\log |x_{n_1}|+\max_{j\in \{0,\ldots,n_1\}}\log|x_j|)=(\varphi_1^{\an})^*(-\log |x_{n_1}|+\log|x_{l_1}|)	\\
 g^-&=	(\varphi_2^{\an})^*(-\log|x_{n_2}|+\max_{j\in \{0,\ldots,n_2\}}\log|x_j|=  (\varphi_2^{\an})^*(-\log|x_{n_2}|+\log|x_{l_2}|))
 \end{align*}
 on $I_{y,e}$ for some $l_i\in \{0,\ldots,n_i\}$. 
 Define the map $N_{i,\max}\colon I_{y,e}\to \N$ as follows $$N_{i,\max}(\eta):=| \{j\in\{0,\ldots,n_i\}| (\varphi_i^{\an})^*(\log |x_{j}|)=(\varphi_i^{\an})^*(\log |x_{l_i}|) \}|.$$
 Shrink $I_{y,e}$ again such that $(\varphi_i^{\an})^*(\log |x_{j}|)$ is affine on $I_{y,e}$ for every $j\in\{0,\ldots,n_i\}$.
Consequently, the function $N_{i,\max}$ is constant on $I_{y,e}\backslash\{y\}$ as $(\varphi_i^{\an})^*(\log |x_{l_i}|)= \max_{} (\varphi_i^{\an})^*(\log |x_{j}|)$ on $I_{y,e}$.
 Hence, we write $N_{i,\max}$ for this constant value.
 
 Define  
 \begin{align*}
 	\Omega_{i,k}:=&\{\eta\in I_{y,e}\mid  (\varphi_i^{\an})^*(\log |x_{j}|)(\eta)+\frac{2}{k}<(\varphi_i^{\an})^*(\log |x_{l_i}|)(\eta)\\
 	&~~~ \text{if } (\varphi_i^{\an})^*(\log |x_{j}|)|_{I_{y,e}}\neq (\varphi_i^{\an})^*(\log |x_{l_i}|)|_{I_{y,e}},~j\in\{0,\ldots,n_i\} \}	
 \end{align*}
 Let $N'\in \N$ such that $\Omega_k:=(\Omega_{1,k}\cap \Omega_{2,k})\backslash \{y\}$  is a non-empty connected open subset of $I_{y,e}$ for every $k\geq N'$. 
 We have $\Omega_{k}\subset \Omega_{k+1}$, and we work in the following with $\Omega_{y,e}:=\Omega_{N'}$.
 Then we get  for every $k\geq N'$ using properties i)-iii) that
 \begin{align}\label{Equ i=1}
 	g^+_{k}&= (\varphi_1^{\an})^*(-\log|x_{n_1}|)+M_{n_1,\frac{1}{k}}((\varphi_1^{\an})^*(\log|x_{0}|),\ldots,(\varphi_1^{\an})^*(\log|x_{n_1}|))\\
 	&=(\varphi_1^{\an})^*(-\log|x_{n_1}|)+M_{N_{1,\max}-1,\frac{1}{k}}((\varphi_1^{\an})^*(\log|x_{l_1}|),\ldots,(\varphi_1^{\an})^*(\log|x_{l_1}|))\nonumber \\
 	&= (\varphi_1^{\an})^*(-\log|x_{n_1}|)+(\varphi_1^{\an})^*(\log|x_{l_1}|)+M_{N_{1,\max}-1,\frac{1}{k}}(0,\ldots,0) \nonumber \\
 	&= g^+ + M_{N_{1,\max}-1,\frac{1}{k}}(0,\ldots,0) \nonumber 
 \end{align}	on $\Omega_{y,e}$. Set $C^+_{k}:=M_{N_{1,\max}-1,\frac{1}{k}}(0,\ldots,0)\in [0,\frac{1}{k}]$.
  Analogously, 
  \begin{align}\label{Equ i=2}
  g^-_k = g^- + M_{N_{2,\max}-1,\frac{1}{k}}(0,\ldots,0)
  \end{align}
on $\Omega_{y,e}$. Set $C^-_{k}:=M_{N_{2,\max}-1,\frac{1}{k}}(0,\ldots,0)\in [0,\frac{1}{k}]$.
 Due to  $g=g^+-g^-=0$ on $\Omega_{y,e}$, we have
 \begin{align*}	
 g^+_{k}-g^-_{k}	=  M_{N_{1,\max}-1,\frac{1}{k}}(0,\ldots,0)  - M_{N_{2,\max}-1,\frac{1}{k}}(0,\ldots,0) 
 \end{align*}
on $\Omega_{y,e}$.
 \vspace{1 cm}
 
\begin{tikzpicture}
\draw[cyan, ultra thick,-]  (12,15)node[] {} -- (18.625,15);
\draw[blue, ultra thick,-]  (9,15) node[blue] {\textbullet} -- (12,15) node[blue] {\textbullet}node [blue, below = 15]{$y$};
\draw[blue] ([shift={(12,15)}]165:1) arc[radius=1, start angle=165, end angle= 195];
\draw[cyan] ([shift={(17.6,15)}]-15:1) arc[radius=1, start angle=-15, end angle= 15];
\draw[dotted] (19,12.5) -- (8,12.5) -- (8,15.8) -- (19,15.8) -- (19,12.5);
\draw(8.5, 13) node[] {$W$};
\draw(10, 14) node[blue] {$S(\XS')\cap Y$};
\draw(15.5, 13) node[cyan] {$I_{y,e}$};
\draw(18.6, 14.2) node[cyan] {$x_e$};
\draw(12,15) node[cyan, ultra thick]{\LARGE{\textbf{[}}};
\draw[dashed] (12, 19) -- (12, 15);
\draw[dashed] (18.6, 19) -- (18.6, 15.2);
\draw(12, 16.3) -- (18.6, 17.8)node[above, right] {$g^+=g^-$};
\draw (15.5,13.8) node[red]{$\Omega_{y,e}$};
\draw[red] ([shift={(15.3,15)}]-10:2.5) arc[radius=2.5, start angle=-10, end angle= 10];
\draw[red] ([shift={(15.3,15)}]170:2.5) arc[radius=2.5, start angle=170, end angle= 190];
\draw[red, dashed] (12.8, 15) -- (17.8, 15);
\draw[dashed] (12.8, 19) -- (12.8, 15);
\draw[dashed] (17.8, 19) -- (17.8, 15);
\draw(12.8, 16.9) -- (17.8, 18.05)node[above, right] {$g^-_{k}$};
\draw(12.8, 17.2) -- (17.8, 18.35);
\draw(18.2, 18.5) node[] {$g^+_{k}$};
\end{tikzpicture}
\begin{center}
~~~~FIGURE 2
\end{center}
\vspace{0.5cm}
\begin{tabular}{l l}
	\underline{\textbf{3. Step:}}& Modify the constructed functions $g^{\pm}_{k}$ on the open subset $V\subset W$ \\ & such that (b) and (c) are satisfied. 
\end{tabular}
\vspace{0.3cm}

We start with (b).
To ensure that the difference $g^+_{k}-g^-_{k}=(\varphi_1^{\an})^*\phi_{1,k}-(\varphi_2^{\an})^*\phi_{2,k}$ is non-negative on $V$, we work with $g^+_{k}:=(\varphi_1^{\an})^*\phi_{1,k}+\frac{1}{k}$ on $V$ instead of $(\varphi_1^{\an})^*\phi_{1,k}$, which is still a smooth psh function on $V$ converging uniformly to the function $g^+=-\log\Vert s_1\Vert_{\LS_1}$, and 
\begin{align*}
(\varphi_1^{\an})^*\phi_{1,k}+\frac{1}{k}-(\varphi_2^{\an})^*\phi_{2,k}&\geq  (\varphi_1^{\an})^*\phi_{1,k}+\frac{1}{k}-g^++g^--(\varphi_2^{\an})^*\phi_{2,k}\\
&\geq  0 + \frac{1}{k}-\frac{1}{k}=0
\end{align*}
on $V$. 
Note that we have used $g=g^+- g^-\geq 0$ on $V$ and property i)  of $M_{n_i,k}$.

Next, we deal with (c), i.e.~we modify $g^+_{k}$ and $g^-_{k}$ such that $g_k:=g^+_{k}-g^-_{k}$ has compact support on $V$. 
As in Step 2, let $y$ be a boundary point of $Y$, $\Omega_{y}$ a star-shaped open neighborhood of $y$ and $I_{y,e}=[y,x_e)$ be a half-open interval of $\Omega_{y}$ contained in an edge $e$ of $S(\XS')$ emanating from $y$ and pointing outwards of $Y$.
Recall that we have constructed in the second step the open subset $\Omega_{y,e}$ of $I_{y,e}$.
Here, we start with the construction of an affine function on the closed annulus $A:=\tau_{\XS'}^{-1}([y,x_e])$ whose graph intersects the graphs of all our functions $g^{\pm}_{k}$ on $\Omega_{y,e}$ for $k$ big enough.
The annulus $A$ is isomorphic to a closed annulus  $A':=S(a,b)=\trop^{-1}([\val(b),\val(a)])$ in $\G_m^{1,\an}$ for some $a,b\in K^\times$ with $|a|< |b|$ and $\trop:= -\log|T|$.
Let $\Phi_{y,e}\colon A\xrightarrow{\sim} A'$ be an isomorphism. 
For simplicity, we may assume that $\val(b)=0$.
Then we can identify  $I_{y,e}=[y,x_e)$ with the real half-open interval $[0,\val(a))$ via $\trop_{y,e}:=-\log|T|\circ\Phi_{y,e}$.
Choose points $\zeta_{y,e},\zeta'_{y,e}\in \Omega_{y,e}\subset e$, and $m\in \N$, $c\in \R$ such that the function $$\psi_{y,e}\colon I_{y,e}\to \R;~\zeta\mapsto m\cdot \trop_{y,e}(\zeta)+c$$ satisfies 
$$\psi_{y,e}(\zeta_{y,e})=((\varphi_1^{\an})^*(-\log|x_{n_1}|)+(\varphi_1^{\an})^*(-\log|x_{l_1}|))(\zeta_{y,e})=g^+(\zeta_{y,e}),$$  and $\psi_{y,e}(\zeta'_{y,e})=g^+_{N'}(\zeta'_{y,e})$.
Recall that $N'$ was fixed in Step 2 to define $\Omega_{y,e}$.
Since $g^{\pm}_{k}$ converges uniformly to $g^+=g^-$ on $\Omega_{y,e}$, there  is an $N''\geq N'$ such that 
$$\sup_{x\in \Omega_{y,e}}(g^{\pm}_{k}-g^{\pm})=\sup_{x\in \Omega_{y,e}}|g^{\pm}_{k}-g^{\pm}|\leq C^+_{N'}$$ for every $k\geq N''.$
By (\ref{Equ i=1}), we have $g^+_{N'}-g^{\pm}= C^+_{N'}+1/N'$, and hence
 $$g^{\pm}_{k} - g^+_{N'}= g^{\pm}_{k} - g^{\pm}+g^{\pm}- g^+_{N'}<0$$ on $\Omega_{y,e}$ for every $k\geq N''$.
Thus, for every $k\geq N''$ there is a point $\zeta_k^+$ in $\Omega_{y,e}$ such that $\psi_{y,e}(\zeta_k^+)=g^+_{k}(\zeta_k^+)$. 
Due to $g^+= g^-\leq g^-_{k}\leq g^{+}_k$ by (\ref{Equ i=2}) and construction, there is also for every $k\geq N''$ a point $\zeta_k^-$ in $\Omega_{y,e}$ such that  $\psi_{y,e}(\zeta_k^-)=g^-_{k}(\zeta_k^-)$.
Recall that  $g^-=g^+$ and $g^+_k$ and $g^-_k$ with $k\geq N'$ are affine on $\Omega_{y,e}$. 

We choose $\varepsilon$ such that 
\begin{align*}
\Omega_{y,e,<}&:=\{\zeta\in \Omega_{y,e}\mid \psi_{y,e}(\zeta)+2\varepsilon< g^+(\zeta)\}\\
\Omega_{y,e,>}&:=\{\zeta\in \Omega_{y,e}\mid \psi_{y,e}(\zeta)> g^+(\zeta)+2\varepsilon\}
\end{align*} are non-empty open subsets of $\Omega_{y,e}$, and we set $\Gamma_{y,e}:=\Omega_{y,e}\backslash(\Omega_{y,e,<}\cup \Omega_{y,e,>})$.

In the following, we smoothen the piecewise affine functions $\max(g^{\pm}_{k},\psi_{y,e})$ in a proper way.
One can construct a smooth symmetric convex $1$-Lipschitz continuous function $\theta_{\varepsilon}\colon\R\to (0,\infty)$ such that $\theta_{\varepsilon}(a)=|a|$ if $|a|\geq \varepsilon$.
We set 
\begin{align}\label{m_eps}
m_\varepsilon(a,b):=
\frac{a+b+\theta_{\varepsilon}(a-b)}{2}.
\end{align}

Then the smooth function $m_\varepsilon\colon\R^2\to \R$ satisfies the following properties:
\begin{enumerate}
\item $m_\varepsilon$ is convex;
\item $\max(a,b)\leq m_\varepsilon(a,b)\leq \max(a,b) + \frac{\varepsilon}{2}$;
\item $m_\varepsilon(a,b)=\max(a,b)$ whenever $|a-b|\geq \varepsilon$; 
\item $m_\varepsilon$ is increasing in every variable.
\end{enumerate} 
We define the functions 
\begin{align*}
\widetilde{g}^+_{k}:=m_{\varepsilon}(g^+_{k},\psi_{y,e}),\\
\widetilde{g}^-_{k}:=m_{\varepsilon}(g^-_{k},\psi_{y,e})
\end{align*}
on $\Omega_{y,e}$.
Then $\widetilde{g}^+_{k}$ (resp.~$\widetilde{g}^-_{k}$) coincides  with $g^+_{k}$ (resp.~with $g^-_{k}$) on $\Omega_{y,e,<}$ for every $k\geq N''$ since $g^+= g^-\leq g^-_{k}\leq g^+_{k}$ on $I_{y,e}$ by (\ref{Equ i=2}) and by construction.
The functions $g^+_{k}$ converge uniformly to $g^-=g^+$, so we can choose $N_{y,e}\geq N''$ such that for all $k\geq N_{y,e}$, we have $g^++\varepsilon\geq g^+_{k}\geq g^-_{k}$  on $\Omega_{y,e}$.
Then $\widetilde{g}^+_{k}$ and  $\widetilde{g}^-_{k}$ coincide with $\psi_{y,e}$  on $\Omega_{y,e,>}$ for all $k\geq N_{y,e}$.
 Thus, $\widetilde{g}^+_{k}-\widetilde{g}^-_{k}=0$ on $\Omega_{y,e,>}$ for every $k\geq N_{y,e}$.
We do this for every $y\in \partial Y$ and  for every edge $e$ in $S(\XS')$ emanating from $y$ and  pointing outwards of $Y$. 

\begin{tikzpicture}[decoration=brace]
\draw[cyan, ultra thick,-]  (12,15)node[] {} -- (18.625,15);
\draw[blue, ultra thick,-]  (9,15) node[blue] {\textbullet} -- (12,15) node[blue] {\textbullet}node [blue, below = 15]{$y$};
\draw[blue] ([shift={(12,15)}]165:1) arc[radius=1, start angle=165, end angle= 195];
\draw[cyan] ([shift={(17.6,15)}]-15:1) arc[radius=1, start angle=-15, end angle= 15];
\draw[dotted] (19,12.5) -- (8,12.5) -- (8,15.8) -- (19,15.8) -- (19,12.5);
\draw(8.5, 13) node[] {$W$};
\draw(10, 14) node[blue] {$S(\XS')\cap Y$};
\draw(15.5, 12.8) node[cyan] {$I_{y,e}$};
\draw(18.6, 14.2) node[cyan] {$x_e$};
\draw(12,15) node[cyan, ultra thick]{\LARGE{\textbf{[}}};
\draw[dashed] (12, 19) -- (12, 15);
\draw[dashed] (18.6, 19) -- (18.6, 15.2);
\draw(12, 16.3) -- (18.6, 17.8)node[above, right] {$g^+=g^-$};
\draw[black, dotted](12, 17.7)node[above, left] {$g^++\varepsilon$} -- (18.6, 19.2);
\draw (15.5,13.5) node[red]{$\Omega_{y,e}$};
\draw[red] ([shift={(15.3,15)}]-10:2.5) arc[radius=2.5, start angle=-10, end angle= 10];
\draw[red] ([shift={(15.3,15)}]170:2.5) arc[radius=2.5, start angle=170, end angle= 190];
\draw[red, dashed] (12.8, 15) -- (17.8, 15);

\draw[decorate,decoration={brace, mirror}, yshift=-4ex] (12.8, 15) -- node[below=0.4ex] {$\Omega_{y,e,<}$} (14.2,15);
\draw[decorate,decoration={brace, mirror}, yshift=-4ex] (16.4, 15) -- node[below=0.4ex] {$\Omega_{y,e,>}$} (17.8,15);  
\draw[dashed] (12.8, 19) -- (12.8, 15);
\draw[dashed] (17.8, 19) -- (17.8, 15);
\draw(12.8, 16.9) -- (17.8, 18.05);
\draw(18.2, 17.95) node[] {$g^-_{k}$};
\draw(12.8, 17.2) -- (17.8, 18.35)node[above, right] {$g^+_{k}$};
\draw(12.8, 17.5) -- (17.8, 18.65);
\draw(18.2, 18.83) node[]{$g^+_{N'}$} ;
\draw(12, 13.1) -- (18.6, 21)node[above, right] {$\psi_{y,e}$};
\draw(15.3, 15) node[black]{\textbullet};
\draw(15.3, 15.5) node[black]{$\zeta_{y,e}$};
\draw[black] (14.2, 15) -- (16.4, 15);
\draw(14.2,15) node[black, ultra thick]{\LARGE{\textbf{[}}};
\draw(16.4,15) node[black, ultra thick]{\LARGE{\textbf{]}}};
\draw (15.5,14.5) node[black]{$\Gamma_{y,e}$};
\draw[red](12.8, 16.9) -- (15.72, 17.57);
\draw (10.5, 17) node[red]{$\max(g^\pm_{k}, \psi_{y,e})$};

\draw[red](12.8, 17.2) -- (16.06 , 17.945);
\draw[red](15.72, 17.57)  -- (18.6, 21);
\draw[red](16.06 , 17.945) -- (18.6, 21);
\end{tikzpicture}
\begin{center}
	~~~~~	FIGURE 3
\end{center}

Recall that we work on the open subset $V=\bigcup_{z\in Y_0}\tau_{\XS'}^{-1}(\Omega_z)$ of $W$ containing $Y$, where $\Omega_z$ is a star-shaped open neighborhood of $z$ in $S(\XS')$ and $I_{z,e}=\Omega_z\cap e$ for every edge $e$ in $ S(\XS')$ emanating from $z$. 
We have $\partial Y\subset Y_0$ and the sets $\Omega_y\backslash Y$ with $y\in \partial Y$ are disjoint of all other star-shaped open neighborhoods $\Omega_z$.
We write $S_y$ for the set of edges $e$ in $S(\XS')$ emanating from $y$ and pointing outwards of $Y$.
Then for every $y\in \partial Y$ and  $e\in S_y$, we have constructed the open subset $\Omega_{y,e}$ of $I_{y,e}=[y,x_e)\subset \Omega_y$.

Let $V_Y$ be the connected component of $V\backslash (\bigcup _{y\in \partial Y; e\in S_y} \Gamma_{y,e})$  containing $Y$ and $V_e$ be the connected component of $V\backslash \Gamma_{y,e}$ containing $x_e$.
Note that by the construction of $V$ all connected components $V_e$ are pairwise disjoint.
We can extend $\widetilde{g}^\pm_{k}$ to a continuous function on $V$  by 
\begin{align*}
\widetilde{g}^\pm_{k}:=\begin{cases}
m_{\varepsilon}(g^\pm_{k},\psi_{y,e}\circ \tau_{\XS'}) & \text{ on } \tau_{\XS'}^{-1}(\Omega_{y,e}),\\
g^\pm_{k} & \text{ on } V_Y,\\
\psi_{y,e}\circ \tau_{\XS'} & \text{ on } V_e.
\end{cases}	
\end{align*}  

 Then the functions $\widetilde{g}^+_{k}-\widetilde{g}^-_{k}$ are non-negative and continuous on $V$ with compact support for every $k\geq \max_{y,e} N_{y,e}$. 
 Recall that by property iv), $m_{\varepsilon}$ is increasing in every variable.
 From now on we only consider $k\in \N$ with $k\geq \max_{y\in \partial Y,e\in S_y} N_{y,e}$.

\vspace{0.5cm}
\begin{tabular}{l l}
\underline{\textbf{4. Step:}} & Show that the modified functions $\widetilde{g}^+_{k}$ (resp.~$\widetilde{g}^-_{k}$) converge uniformly to a function\\ & $\widetilde{g}^+$  
 (resp.~$\widetilde{g}^-$) such that $g=\widetilde{g}^+-\widetilde{g}^-$ on $V$.
\end{tabular}  
 
\vspace{0.3cm} 
 
 Define the following functions on $V$ 
\begin{align*}
\widetilde{g}^\pm:=\begin{cases}
m_{\varepsilon}(g^\pm,\psi_{y,e}\circ \tau_{\XS'}) & \text{ on } \tau_{\XS'}^{-1}(\Omega_{y,e}),\\
g^\pm & \text{ on } V_Y,\\
\psi_{y,e}\circ \tau_{\XS'} & \text{ on } V_e.
\end{cases}	
\end{align*} 
 
By construction, these functions are well-defined and continuous.
Since  $g=g^+-g^-$ on $V$ with $g=g^+-g^-=0$ on $V\backslash Y$, we clearly have $g=\widetilde{g}^+-\widetilde{g}^-$ on $V$.

Next, we show that $\widetilde{g}^+_{k} $ (resp.~$\widetilde{g}^-_{k}$) converge uniformly to $\widetilde{g}^+$ (resp.~to $\widetilde{g}^-$) on $V$.
We know that $g^+_{k} $ (resp.~$g^-_{k}$)  converge uniformly to $g^+$ (resp.~to $g^-$) on $V$,
thus  $\widetilde{g}^+_{k}$ (resp.~$\widetilde{g}^-_{k}$) converge uniformly to $g^+$ (resp.~to $g^-$) on $V_Y$.
Since $\widetilde{g}^\pm_{k}=\psi_{y,e}\circ \tau_{\XS'}=\widetilde{g}^\pm$ on $V_e$, it remains to consider the open subset $\tau^{-1}_{\XS'}(\Omega_{y,e})$.
For every $x\in\tau^{-1}_{\XS'}(\Omega_{y,e})$, we have 
\begin{align*}
|\widetilde{g}^\pm_{k}(x)-\widetilde{g}^\pm(x)|&= \left|m_{\varepsilon}(g^\pm_{k}(x),\psi_{y,e}(\tau_{\XS'}(x))-m_{\varepsilon}(g^\pm(x),\psi_{y,e}(\tau_{\XS'}(x)))\right|\\& =
\left|\frac{g^\pm_{k}(x)-g^\pm(x)+\theta_{\varepsilon}(g^\pm_{k}(x)-\psi_{y,e}(\tau_{\XS'}(x)))-\theta_{\varepsilon}(g^\pm(x)-\psi_{y,e}(\tau_{\XS'}(x)))}{2}\right|\\&\leq \left |\frac{g^\pm_{k}(x)-g^\pm(x)}{2}\right|+\left|\frac{g^\pm_{k}(x)-g^\pm(x)}{2}\right | \\&\leq  |g^\pm_{k}(x)-g^\pm(x)| \end{align*}
where we used that $\theta_{\varepsilon}$ is $1$-Lipschitz continuous to get the inequality. 
Due to the uniform convergence of $g^\pm_{k}$ to $g^\pm$ on $V$, which contains $\tau^{-1}_{\XS'}(\Omega_{y,e})$, we get
\begin{align*}
\lim_{k\to \infty}\sup_{x\in \tau^{-1}_{\XS'}(\Omega_{y,e})}|\widetilde{g}^\pm_{k}(x)-\widetilde{g}^\pm(x)|=0.
\end{align*}
\vspace{0.5cm}
\begin{tabular}{l l}
\underline{\textbf{5. Step:}} & Show that the modified functions $\widetilde{g}^+_{k}$ and  $\widetilde{g}^-_{k}$ are smooth and psh on $V$.
\end{tabular}  
\vspace{0.2cm}

Note that both properties are local.
We already know that $g^\pm_{k}\in C^\infty(V_Y)$, so it remains to find for every point  $x$ in  $\tau^{-1}_{\XS'}(\Omega_{y,e})\cup V_e$ an open neighborhood $V_x$ in $V$ such that $\widetilde{g}^\pm_{k}$ is smooth and psh on $V_x$.

We start with a point $x\in V_e$.
Choose an open neighborhood $V_x$ of $x$ in the open subset $V_e\subset V$, then $\widetilde{g}^\pm_{k}$ is given by $\psi_{y,e}\circ\tau_{\XS'}$ on $V_x$. 
For every $\zeta \in V_x\subset A$, we have  
\begin{align*}
	\widetilde{g}^\pm_{k}(\zeta)=(\psi_{y,e}\circ\tau_{\XS'})(\zeta)=(\psi_{y,e}\circ\tau_{A})(\zeta)
	= \Phi_{y,e}^*(-m\cdot\log|T|+c)(\zeta)
\end{align*}
on $V_x$. 
We first show
$$-m\cdot\log|T|+c\in \ker(d'd''\colon C^\infty(\G_m^{1,\an})\to \AS^{1,1}(\G_m^{1,\an})).$$
Consider the tropical chart $(V,\varphi_U)=(\G_m^{1,\an},\id)$ of $\G_m^{1,\an}$. Then $\trop_U=\log|T|$, and so  $-m\cdot\log|T|$ can be written as the triple $(\G_m^{1,\an},\id,\lambda)$, where $\lambda\colon \R\to \R$ is the affine function $t\mapsto mt+c$.
Thus, $-m\cdot\log|T|+c$ is a smooth function on $\G_m^{1,\an}$ (cf.~Definition \ref{D forms}). The $(1,1)$ form $d'd''(-m\cdot\log|T|+c)$ is given by the triple $(\G_m^{1,\an},\id,d'd''\lambda)$. Since $\lambda$ is affine, the form $d'd''\lambda$ is zero, and so is $ d'd''(-m\cdot\log|T|+c)$.
Consequently,  $-m\cdot\log|T|+c\in \ker(d'd''\colon C^\infty(\G_m^{1,\an})\to \AS^{1,1}(\G_m^{1,\an}))$.
This implies that $\widetilde{g}^\pm_{k}|_{V_x}=\Phi_{y,e}^*(-m\cdot\log|T|+c)$ is a smooth psh function on $V_x$ (see Remark \ref{Pullback Bem}).
 
 Now, consider $x\in \tau_{\XS'}^{-1}(\Omega_{y,e})$.
 We have just seen that $\psi_{y,e}\circ \tau_{\XS'}$ is a smooth psh function on $\tau_{\XS'}^{-1}((y,x_e))$. 
 Using Proposition \ref{Prop smooth psh}, there is a chart $(V_x,\varphi_{U_x})$ with $x\in V_x\subset \tau_{\XS'}^{-1}((y,x_e))$ such that $\psi_{y,e}\circ \tau_{\XS'}=\phi\circ \trop_{U_x}$ on $V_x$ for a smooth function $\phi$ on $\R^{r}$ that is convex restricted to every  polyhedron  contained in $\trop_{U_x}(V_x)$. 
 On the other hand, we know that the function $g^\pm_{k}$ is smooth and psh on $\tau_{\XS'}^{-1}((y,x_e))$ as well. 
 Hence, there is also a chart $(V'_x,\varphi_{U'_x})$ with $x\in V'_x\subset \tau_{\XS'}^{-1}((y,x_e))$ such that $g^\pm_{k}=\phi'\circ \trop_{U'_x}$ on $V'_x$ for a smooth function $\phi'$ on $\R^{r'}$ that is convex restricted to every  polyhedron contained in $\trop_{U'_x}(V'_x)$.
 Working on the intersection $(V_x\cap V'_x,\varphi_{U_x}\times\varphi_{U'_x})$, which is a subchart of both \cite[Proposition 4.16]{Gubler}, we get 
 \begin{align*}
 \psi_{y,e}\circ \tau_{\XS'}&=(\phi\circ \Trop(\pi)) \circ \trop_{U_x\cap U'_x}\\
 g^\pm_{k}&=(\phi'\circ \Trop(\pi'))\circ \trop_{U_x\cap U'_x}	
 \end{align*} 
for the corresponding transition functions $\pi,\pi'$ satisfying $\varphi_{U_x}=\pi\circ (\varphi_{U_x}\times\varphi_{U'_x})$ and $\varphi_{U'_x}=\pi'\circ (\varphi_{U_x}\times\varphi_{U'_x})$.
 Since $\Trop(\pi)$ and $\Trop(\pi')$ are integral affine functions on $\R^{r+r'}$, the composition $\phi\circ \Trop(\pi)$ (resp.~$\phi'\circ \Trop(\pi')$) is still a smooth function on $\R^{r+r'}$ with a convex restriction to every polyhedron.
 Thus, $m_{\varepsilon}(\phi\circ \Trop(\pi),\phi'\circ \Trop(\pi'))$ is a smooth function on $\R^{r+r'}$, and the properties i) and iv) of $m_{\varepsilon}$ imply that the restriction to every polyhedron is convex since the restriction of $\phi\circ \Trop(\pi)$ and $\phi'\circ \Trop(\pi')$ are.
We have 
 $$\widetilde{g}^\pm_{k}=m_{\varepsilon}(g^\pm_{k},\psi_{y,e}\circ \tau_{\XS'})=m_{\varepsilon}(\phi\circ \Trop(\pi),\phi'\circ \Trop(\pi'))\circ \trop_{U_x\cap U'_x}$$ on $V_x\cap V'_x$ for every $x\in \tau_{\XS'}^{-1}(\Omega_{y,e})$, so $\widetilde{g}^\pm_{k}$ is a smooth psh function on $\tau_{\XS'}^{-1}(\Omega_{y,e})$ by Proposition \ref{Prop smooth psh}, which proves Step 5.
 
Altogether, $\widetilde{g}^\pm_{k}$  are smooth psh function on $V$.
We set 
$$\widetilde{g}_{k}:=\widetilde{g}^+_{k}-\widetilde{g}^-_{k},$$ 
then the functions $\widetilde{g}_{k}$ satisfy by construction all the required properties in Step 0.

\vspace{0.5cm}

\begin{tabular}{l l}
	\underline{\textbf{6. Step:}} & Construction of $V'$ and $\eta$.
\end{tabular}  

\vspace{0.3cm} 
 By the construction of $V$ and $\widetilde{g}_{k}$, it is no problem to construct the required set $V'$, i.e.~an open subset $V'$ of $V$ containing $Y$ such that $\overline{V'}\subset V$ and $\widetilde{g}_k\in C^\infty_c(V')$. 
 Furthermore, let $V''$ be an open neighborhood of $\overline{V'}$ in $V$ with $\overline{V''}\subset V$. 
 The topological space $\Xan$ is a compact Hausdorff space.
 Urysohn's Lemma states the existence of a continuous function $\eta\colon \Xan \to [0,1]$ with $\eta \equiv1$ on $\overline{V'}$ and $\eta \equiv0$ on $\Xan\backslash V''$.
 Thus, $\eta$ has compact support in $V$, i.e.~it is the required function in Step 0.
\vspace{0.5cm}

Thus, we have constructed everything as it was described in Step 0 proving the theorem.
\end{proof}

\begin{kor}\label{Kor Equ}
	Let $f\colon W\to\R$ be a continuous function. Then $f$ is subharmonic if and only if $f$ is psh.
\end{kor}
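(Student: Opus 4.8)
The plan is to deduce this equivalence immediately from the two main theorems of this section, since all of the substantive analytic work has already been carried out in their proofs. Both implications have been established separately, so the only task is to assemble them and check that their hypotheses cover the case of a continuous $f$.

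For the forward direction, I would assume $f$ is subharmonic and simply invoke Theorem \ref{Korollar Th impl. CLD}. That theorem shows that \emph{every} subharmonic function $f\colon W\to[-\infty,\infty)$ is psh, without any continuity hypothesis; in particular it applies to the continuous $f$ at hand, so $f$ is psh. For the backward direction, I would assume $f\colon W\to\R$ is continuous and psh and invoke Theorem \ref{folgt subh}, which is stated precisely for continuous psh functions and concludes that $f$ is subharmonic.

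I do not anticipate any genuine obstacle: the entire content of the corollary is repackaged from the two theorems, and the only point worth noting is that continuity of $f$ is actually needed only for the psh $\Rightarrow$ subharmonic implication, while the subharmonic $\Rightarrow$ psh implication holds in the greater generality of functions valued in $[-\infty,\infty)$. Combining the two implications therefore yields the claimed equivalence for any continuous $f\colon W\to\R$.

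\begin{proof}
If $f$ is subharmonic, then $f$ is psh by Theorem \ref{Korollar Th impl. CLD}. Conversely, if the continuous function $f$ is psh, then $f$ is subharmonic by Theorem \ref{folgt subh}.
\end{proof}
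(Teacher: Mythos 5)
Your proposal is correct and is exactly the paper's own proof: the corollary is a direct combination of Theorem \ref{Korollar Th impl. CLD} (subharmonic $\Rightarrow$ psh, which needs no continuity) and Theorem \ref{folgt subh} (continuous psh $\Rightarrow$ subharmonic). Your additional remark about where continuity is actually needed is accurate but not required.
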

\begin{proof}
	Follows directly from Theorem \ref{Korollar Th impl. CLD} and Theorem \ref{folgt subh}.
\end{proof}

\section{Stability under pullback and a regularization theorem}
Due to this equivalence in Corollary \ref{Kor Equ}, we know that a continuous psh function has all the nice properties that are shown for subharmonic functions in \cite{Th}. 
More precisely, we now know that the property psh for continuous functions is stable under pullback with respect to morphisms of curves. 
Furthermore, we show that there is a monotone regularization theorem in the setting of Chambert-Loir and Ducros under certain conditions, e.g.~if $X$ is $\mathbb{P}_K^1$ or a Mumford curve.

\begin{kor}\label{Kor Pullback}
Let $X,X'$ be smooth proper algebraic curves over $K$ and $\varphi\colon W'\to W$ be a morphism of $K$-analytic spaces for open subsets $W\subset \Xan$ and $W'\subset (X')^{\an}$. If a continuous function $f\colon W\to \R$ is psh on $W$, then  $\varphi ^*f$ is psh on $\varphi ^{-1}(W)$. 
\end{kor}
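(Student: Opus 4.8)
The plan is to transport the problem into Thuillier's framework, where stability under pullback is already available, and then transport it back, using the equivalence of Corollary \ref{Kor Equ} at both ends. Since $f \colon W \to \R$ is continuous and psh on $W \subset \Xan$, Corollary \ref{Kor Equ} applied to the smooth proper curve $X$ shows that $f$ is subharmonic in the sense of Thuillier on $W$.

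Next I would pull back this subharmonicity. Note that $\varphi^{-1}(W) = W'$ and that $\varphi^* f = f \circ \varphi$ is continuous, being the composite of the continuous function $f$ with the continuous map underlying $\varphi$. The key input is Proposition \ref{Prop Pullback}, the stability of Thuillier's subharmonicity under pullback, which yields that $\varphi^* f$ is subharmonic on $W'$. Applying Corollary \ref{Kor Equ} a second time, now to the smooth proper curve $X'$, the continuous subharmonic function $\varphi^* f$ is psh on $\varphi^{-1}(W) = W'$, which is exactly the claim.

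The one point requiring a little care is matching the hypotheses of Proposition \ref{Prop Pullback}: it is phrased for a morphism $(X')^{\an} \to \Xan$ of full analytifications, whereas here $\varphi$ is only a morphism between the open subsets $W'$ and $W$. I expect this to be resolved entirely by locality. Both subharmonicity and the property psh are sheaf conditions --- the latter by Proposition \ref{Prop psh sheaf}, the former as recalled in Section 2 --- so it suffices to verify the conclusion in a neighborhood of each point of $W'$. As open subsets of the analytifications of smooth curves, $W'$ and $W$ are themselves smooth $K$-analytic curves, and Thuillier's pullback argument is local in nature, so Proposition \ref{Prop Pullback} applies to $\varphi$ after this reduction. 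This bookkeeping is the only obstacle; the substance of the corollary is carried entirely by the equivalence in Corollary \ref{Kor Equ} together with Thuillier's pullback theorem.
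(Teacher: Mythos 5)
Your proposal is correct and is essentially the paper's own proof: apply Corollary \ref{Kor Equ} to $X$ to get subharmonicity, pull back via Proposition \ref{Prop Pullback}, and apply Corollary \ref{Kor Equ} again to $X'$. Your additional remark on reconciling the hypotheses of Proposition \ref{Prop Pullback} (stated for morphisms of full analytifications) with the map $\varphi\colon W'\to W$ is a reasonable extra precaution --- Thuillier's result \cite[Proposition 3.1.14]{Th} is local and applies to morphisms of analytic curves, so the reduction you sketch goes through --- but the paper simply invokes the proposition without this bookkeeping.
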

\begin{proof}
By Corollary \ref{Kor Equ}, the function $f$ is subharmonic on $W$, and so  $\varphi^* f$ is subharmonic on $\varphi ^{-1}(W)$ by Proposition \ref{Prop Pullback}. Using again Corollary \ref{Kor Equ},  $\varphi^*f$ is psh on $\varphi^{-1}(W)$.
\end{proof}

To obtain a  monotone regularization theorem in the setting of Chambert-Loir and Ducros for certain $X$, e.g.~for the projective line or a Mumford curve, we   use the monotone regularization theorem in Thuillier's setting. 
Hence, we first need to show that every point has an open neighborhood such that every lisse subharmonic, and so psh, function  is globally psh-approximable on it, i.e.~it is the uniform limit of smooth psh functions. 
Recall the definition of the sheaf $C^\infty$ of smooth functions on $\Xan$ from Definition \ref{Def glatt}.
The key tool of this step is to use that for certain $X$ every lisse function $f$ with $dd^c f=0$ is automatically smooth (cf.~\cite[Th\'eor\`eme 2.3.21]{Th}).
These functions are the harmonic functions and the corresponding sheaf is denoted by $\HS_X$ (cf.~Definition \ref{Def harmonisch Berk}). Note that in general, every smooth function $f$ with $d'd''f=0$ is harmonic.

\begin{lem}\label{globally psh-approx}
Let $X$ be a smooth proper algebraic curve such that the sheaf $\HS_X$ of harmonic functions on $\Xan$ is a subsheaf of the sheaf  $C^\infty$ of smooth functions on $\Xan$.
Then every lisse function $f\colon Y\to \R$ on a strictly affinoid domain $Y$ of $\Xan$ which is subharmonic on the interior $W:=Y\backslash\partial Y$  of $Y$ is globally psh-approximable on $W$.
More precisely, there is a monotone decreasing sequence of continuous functions $f_k$ on Y that are smooth and psh on $W$, and converge uniformly to $f$ on $Y$.
\end{lem}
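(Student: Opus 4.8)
The plan is to localise the problem to the finitely many points where the hypothesis $\HS_X\subseteq C^\infty$ does not already make $f$ smooth, and there to smooth $f$ by the very-ample/max construction from the proof of Theorem \ref{folgt subh}. Writing $f=F\circ\tau_\YS$ for a piecewise affine $F$ on the graph $S(\YS)$, subharmonicity on $W$ means $dd^cF\ge 0$ on $S(\YS)\setminus\partial Y$ by Proposition \ref{Prop lisse subh}, and $f$ is psh on $W$ by Proposition \ref{Thm lisse}. After refining the semistable model I may assume that $F$ is affine on every edge and that its corner set $\supp(dd^cF)\cap W$ consists of finitely many type II vertices $x_1,\dots,x_m$. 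Away from these vertices $f$ is locally harmonic (on edge interiors $F$ is affine, at the remaining interior vertices the outgoing slopes sum to zero, and off the skeleton $f$ is locally constant), hence smooth by $\HS_X\subseteq C^\infty$ and therefore smooth and psh. Thus the only obstruction to smoothness sits at the $x_i$, and it suffices to fix pairwise disjoint neighbourhoods $\Omega_i\ni x_i$ with $\overline{\Omega_i}\subset W$ and, for each $i$, smooth psh functions $f_{i,k}$ on $\Omega_i$ with $f_{i,k}\ge f$, $f_{i,k}\downarrow f$ uniformly and $f_{i,k}=f$ near $\partial\Omega_i$; gluing $f_{i,k}$ on $\Omega_i$ and $f$ elsewhere yields $f_k$ that are continuous on $Y$, smooth and psh on $W$, equal to $f$ near $\partial Y$, and, by finiteness of $m$, monotone decreasing and uniformly convergent to $f$.

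Granting that $f$ can be written near $x_i$ as $\psi\circ\trop_U$ for a tropical chart $(V,\varphi_U)$ with $\psi$ convex on the polyhedra of $\trop_U(V)$ — in the form of the functions $-\log|x_{n}|+\max_{j}\log|x_j|$ that appear in Step~1 of the proof of Theorem \ref{folgt subh} — the construction of $f_{i,k}$ follows that proof. Replacing the relevant maxima by the smooth convex approximations $M_{n,1/k}$ gives smooth psh functions by Proposition \ref{Prop smooth psh} lying between $\psi$ and $\psi+1/k$; choosing the $M_{n,1/k}$ nested in $k$ makes the $f_{i,k}$ decrease to $f$. Since $M_{n,1/k}$ equals the maximum once its arguments are $2/k$-separated, $f_{i,k}$ already agrees with $f$ outside a shrinking neighbourhood of $x_i$, and a final cut-off by the smoothed maximum $m_\varepsilon$ of that proof enforces $f_{i,k}=f$ near $\partial\Omega_i$ while preserving convexity, hence psh-ness.

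The crux — and the only place where the model-theoretic machinery is essential — is producing such a convex tropical representation at a type II vertex of valence $\ge 3$. There subharmonicity is the single balancing inequality $\sum_{e} d_{v_e}F(x_i)\ge 0$ rather than an edge-by-edge convexity, so $f$ cannot be smoothed directly on the metric graph; one must pass to a sufficiently fine very affine chart in which the junction unfolds and $f$ becomes the pullback of a genuine max-of-affine (equivalently, nef local model) function. This is exactly the input behind the type II case of Proposition \ref{Thm lisse}, obtained via very-ample line bundles and the identity $\mathrm{MA}=dd^c$ for model functions (Theorems \ref{Thm Masse Model fct} and \ref{Thm Masse Model fct 2}). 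Once this representation is available the monotone smoothing above applies, and $\HS_X\subseteq C^\infty$ guarantees smoothness of $f$ off the $x_i$. As an alternative organisation one may first split $f=h+\sum_i c_i\,(-g^{Y}_{x_i}\circ\tau_\YS)$ using the Green functions $g^Y_{x_i}$ from Corollary \ref{Kor Greenfct}, where $h$ is harmonic and hence smooth psh by the hypothesis, so that the same local smoothing only has to be carried out on the elementary single-peak summands.
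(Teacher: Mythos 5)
Your overall frame agrees with the paper's: write $f=F\circ\tau_\YS$, note that $\supp(dd^cF)\cap W$ is a finite set of points, use the hypothesis $\HS_X\subseteq C^\infty$ to get smoothness (and psh-ness) wherever $f$ is locally harmonic, and then glue local approximants at the bad points via the smoothed maximum $m_\varepsilon$, arranging monotonicity at the end; your alternative decomposition $f=h+\sum_i c_i(-g^Y_{x_i})$ via the Green functions is a clean extra reduction. However, the construction at the bad points --- which you yourself identify as the crux --- has a genuine gap. You ``grant'' that near a type II peak $x_i$ the function $f$ can be written as $\psi\circ\trop_U$ for a tropical chart with $\psi$ convex, of max-of-affine type, and you assert this is ``exactly the input behind the type II case of Proposition \ref{Thm lisse}.'' It is not. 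The type II case of Proposition \ref{Thm lisse} (via Proposition \ref{Lemma currents agree}) writes $f$ locally as an $\R$-linear combination $\sum_i|\lambda_i|f_i+F(x)$ of \emph{tent-shaped model functions} $f_i$, which are not individually psh and certainly not of convex max-of-affine form; the only thing extracted from Theorems \ref{Thm Masse Model fct} and \ref{Thm Masse Model fct 2} is the current identity $d'd''[f_i]=[dd^cf_i]$, not any convex tropical representation of $f$. No such representation is proved anywhere in the paper, and obtaining one is genuinely delicate: for a fixed chart, convex-extendability of $\psi$ from the star of $\trop_U(x_i)$ to $\R^r$ imposes one inequality $\sum_j c_j\lambda_j\geq 0$ for \emph{every} positive relation $\sum_j c_jw_j=0$ among the edge directions, whereas subharmonicity gives only the single balancing inequality; moreover $f$ has arbitrary real slopes while functions of the form $-\log|x_{n}|+\max_j\log|x_j|$ have integral ones. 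In effect your granted premise is essentially equivalent to the local psh-approximability you are trying to prove.

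The idea your proposal is missing is that the paper never represents or smooths $f$ itself at the peak: it \emph{replaces} it there. For each bad point $x$ one builds a piecewise affine $G_x$ on the star $\Gamma_x$ with $G_x(x)=F(x)$, $G_x<F$ away from $x$, $G_x$ affine on each edge and $dd^cG_x=0$ near $x$; then $G_x\circ\tau_\YS+\varepsilon$ is \emph{harmonic} on $V_x=\tau_\YS^{-1}(\Gamma_x^\circ)$, hence smooth and psh by the hypothesis $\HS_X\subseteq C^\infty$ (this is where the hypothesis does double duty, not only away from the peaks), and the approximant is $f_\varepsilon=m_{\varepsilon/2}(G_x\circ\tau_\YS+\varepsilon,f)$ on $V_x$ and $f$ elsewhere. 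Note that $f_\varepsilon$ equals $G_x\circ\tau_\YS+\varepsilon$ near $x$, so it exceeds $f$ by $\varepsilon$ at the peak and never agrees with $f$ there --- which is precisely why no tropical representation of $f$ at $x$ is ever needed; the smooth-max gluing only involves two functions that are both harmonic (hence smooth psh) on the overlap region. Monotone uniform convergence is then arranged by a careful choice $\varepsilon_{k+1}<\varepsilon_k/3$ and Dini's theorem. If you replace your unproven ``convex tropical representation'' step by this harmonic capping construction, your argument (in either of your two organizations) matches the paper's proof.
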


\begin{proof} 
Since $f$ is lisse, we can find a strictly semistable model $\YS$ such that $f=F\circ \tau_{\YS}$ on $Y$ for a piecewise affine function $F$ on $S(\YS)$.
We construct continuous functions on $Y$ converging uniformly to $f$ that are smooth and psh on $W$ using techniques as in the proof of Theorem \ref{folgt subh}.

Let $S$ be the set of points in $S(\YS)\backslash \partial Y$ that are contained in the support of the discrete measure $dd^c F$. 
Consider in the following a point $x$ in $S$.
Then $dd^c F> 0$  in an open neighborhood of $x$ because $f$ is subharmonic.
The considered point $x$ is either of type II or III. 
If $x$ is of type II, we may assume $x$ to be a vertex of $S(\YS)$ and we denote by $e_{x,1},\ldots,e_{x,n}$ the adjacent edges in $S(\YS)$ and by $x_i$ the second endpoint of $e_{x,i}$.
If $x$ is of type III, $x$ is contained in the interior of an edge $e_x$ with endpoints $x_1$ and $x_2$ and we denote by $e_{x,1}$ and $e_{x,2}$ the segments $[x_1,x]$ and $[x,x_2]$ of $e_x$. 
By blowing up $\YS$, we may assume that no $x_i$ belongs to $S$ and that $F$ restricted to every  $e_{x,i}$ is affine. 

In both situations, type II or III, we can find a piecewise affine function $G_x$ on $\Gamma_x:=\bigcup_{i=1,\ldots,n} e_{x,i}$ such that 
\begin{enumerate}
\item $G_x(x)=F(x)$,
\item $G_x< F$ on $\Gamma_x\backslash \{x\}$,
\item   $dd^c G_x=0$ in a neighborhood of $x$, and
\item $(G_x)|_{e_{x,i}}$ is affine for every $i=1,\ldots,n$.

\end{enumerate}    
Choose $\varepsilon_{x,i}>0$ with  $F(x_i)-G_x(x_i)> 2\varepsilon_{x,i}.$ 
Then there is a point $y_i\in (x,x_i)$ such that $F(y_i)= G_x(y_i)+\varepsilon_{x,i}$,  $F< G_x+\varepsilon_{x,i}$ on $[x,y_i)$ and $F> G_x+\varepsilon_{x,i}$ on $(y_i,x_i]$.
Since $S$ has only finitely many  points and corresponding adjacent edges, we can set $\varepsilon_0:=\min_{x\in S,i}\varepsilon_{x,i}$.
Then for every $x\in S$, every $e_{x,i}=[x,x_i]$, and every $0<\varepsilon\leq\varepsilon_0$ the following inequalities
 \begin{align}\label{U1}
G_x(x)+\varepsilon-F(x)>  \varepsilon/2, ~~~~
F(x_i)-(G_x(x_i)+\varepsilon)> \varepsilon/2 
\end{align}
hold.

For every $x\in S$, the set $V_x:=(\tau_{\YS}^{-1}(\Gamma_x^\circ))$  is an open neighborhood of $x$ in $W$ and
by construction these sets are pairwise disjoint. 
We define for every $0<\varepsilon\leq\varepsilon_0$ the following function on $Y$
\begin{align}\label{f_k}
f_{\varepsilon}:=
\begin{cases}
m_{\frac{\varepsilon}{2}}(G_x\circ \tau_{\YS}+\varepsilon,f) & \text{on } V_x \text{ for } x\in S,\\
f & \text{on } Y\backslash \bigcup_{x\in S} V_x,
\end{cases} 
\end{align}
where $m_{\varepsilon}$ is the smooth maximum defined in (\ref{m_eps}) (see proof of Theorem \ref{folgt subh}).
By $(\ref{U1})$ and  property iii)  of $m_{\varepsilon/2}$ in  (\ref{m_eps}), the defined function coincides with $G_x\circ \tau_{\YS}+\varepsilon$ in an open neighborhood of $x$ and with $f$ in an open neighborhood of $x_i$, where $x_i$ is the other vertex for an adjacent $e_{x,i}=[x,x_i]$.
Thus, $f_{\varepsilon}$ is continuous on $Y$.

We will later use functions of this form to construct our sequence, but first we  show that $f_{\varepsilon}$ is smooth and psh on $W$.
By construction, there is an open neighborhood $W'$ of $W\backslash \bigcup_{x\in S} V_x$ such that $f_{\varepsilon}$ coincides with $f$ and $f=F\circ \tau_{\YS}$ is harmonic on $W'$.
Since  we required that $\HS_X$ is a subsheaf of $C^\infty$ and every harmonic function is psh by Proposition \ref{Prop lisse subh} and Theorem \ref{folgt subh}, the function $f_{\varepsilon}$ is a smooth psh function on $W'$.

On the other hand, for every $x\in S$ the constructed function $f_{\varepsilon}$ coincides with the harmonic function $G_x\circ \tau_{\YS}+\varepsilon$ on an open neighborhood of $x$, and so it is locally smooth and psh  at $x$ as well.
It remains to consider $f_{\varepsilon}$ on $\tau^{-1}_{\YS}((x,x_i))$ for every $x\in S$ and every adjacent $e_{x,i}=[x,x_i]$.
Since $f$ and $G_x\circ \tau_{\YS}+\varepsilon$ are harmonic, and so smooth and psh on $\tau^{-1}_{\YS}((x,x_i))$, one can show as in Step 5 in  the proof of Theorem  \ref{folgt subh} that $f_{\varepsilon}=m_{\frac{\varepsilon}{2}}(G_x\circ \tau_{\YS}+\varepsilon,f)$ is still smooth and psh on $\tau^{-1}_{\YS}((x,x_i))$.
Altogether, $f_{\varepsilon}$ is a smooth psh function on $W$.

With the help of the function $f_{\varepsilon}$ defined in (\ref{f_k}), we construct now a monotonically decreasing sequence $(f_k)_{k\in \N}$ of smooth psh functions converging uniformly to $f$ on $Y$.
For every $k\in \N$, we define $\varepsilon_k >0$ recursively starting with $\varepsilon_0$ from above, and set $f_k:=f_{\varepsilon_k}$.
To do so, we need to consider the subsets
 $$\Omega_{k}:=\bigcup_{x\in S}\left\{y\in V_x\mid |G_x(\tau_{\YS}(y))+\varepsilon_k-f(y)|< \frac{\varepsilon_k}{2} \right\}$$ on which $f_k$ does not necessarily coincide with $\max(G_x\circ \tau_{\YS}+\varepsilon_k,f)$ for some $x\in S$.
 For a given $\varepsilon_k$, we choose $\varepsilon_{k+1}$ such that  $0< \varepsilon_{k+1}<\varepsilon_{k}/3$.  Then $\Omega_{k}\cap\Omega_{k+1}=\emptyset$ for every $k\in \N$ and $\varepsilon_k\to 0$ for $k\to \infty$.
 
We show that the sequence $f_k$ converges pointwise to $f$ and $f_{k+1}\leq f_k$ on $Y$.
If $y\in Y\backslash \bigcup_{x\in S}V_x$, then all $f_k$ coincide with $f$, and so both assertions are trivial. 
We therefore assume that $y\in V_x$ for some $x\in S$.
In the case of $y\in \tau^{-1}_{\YS}(x)$, we have $$f_k(y)=G_x(x)+\varepsilon_k=F(x)+\varepsilon_k=f(y)+\varepsilon_k,$$ and so 
$f_k(y)\geq f_{k+1}(y)$ and $f_k(y)$ converges to $f(y)$ for $k\to \infty$.
Now we consider the case $y\in V_x\backslash \{\tau^{-1}_{\YS}(x)\}$. 
Then we can find an $\varepsilon_N$ small enough such that $f(y)-(G_x(\tau_{\YS}(y))+\varepsilon_N)>\varepsilon_N/2$.
Hence, for every $k\geq N$ we have $f(y)-(G_x(\tau_{\YS}(y))+\varepsilon_k)>\varepsilon_k/2$, and so
\begin{align*}
	f_k(y)=\max(G_x\circ \tau_{\YS}+\varepsilon_k,f)=f(y).
\end{align*} 
Thus, $f_k(y)$ converges clearly to $f(y)$.
Next, consider an arbitrary $k\in \N$ and show $f_k(y)\geq f_{k+1}(y)$.
If $y\notin \Omega_k\cup \Omega_{k+1}$, then 
\begin{align*}
f_{k+1}(y)= 	\max(G_x(\tau_{\YS}(y))+\varepsilon_{k+1},f(y))\leq \max(G_x(\tau_{\YS}(y))+\varepsilon_k,f(y))=f_k(y)
\end{align*}since $\varepsilon_k> \varepsilon_{k+1}$.
If $y\in \Omega_k$, by the choice of $\varepsilon_{k+1}$, we have  $y\notin\Omega_{k+1}$.
Thus, 
\begin{align*}
	f_{k+1}(y)=\max(G_x(\tau_{\YS}(y))+\varepsilon_{k+1},f(y))\leq \max(G_x(\tau_{\YS}(y))+\varepsilon_k,f(y))\leq f_{k}(y),
\end{align*}
where the last inequality is true by property ii) following (\ref{m_eps}).
Finally, let $y\in \Omega_{k+1}$, and so $y\notin  \Omega_{k}$.
Then $G_x(\tau_{\YS}(y))+\varepsilon_{k}\geq f(y)$ as $\varepsilon_{k+1}< \varepsilon_k/3$, and so 
$$f_k(y)=\max(G_x(\tau_{\YS}(y))+\varepsilon_{k}, f(y)) =G_x(\tau_{\YS}(y))+\varepsilon_{k}\geq f(y)+\varepsilon_{k}/2$$
as  $y\notin  \Omega_{k}$.
By property ii) following (\ref{m_eps}), $\varepsilon_{k+1}< \varepsilon_{k}/3$ and the last inequality, we get
\begin{align*}
	f_{k+1}(y)&\leq \max(G_x(\tau_{\YS}(y))+\varepsilon_{k+1},f(y))+\varepsilon_{k+1}/4
	\\&\leq\max(G_x(\tau_{\YS}(y))+\varepsilon_{k},f(y)+\varepsilon_{k}/2) \\
	&\leq f_{k}(y).
\end{align*}

Thus, the sequence $f_k$ of continuous functions converges pointwise to the continuous function $f$ on $Y$ and $f_{k+1}\leq f_k$. 
Since $Y$ is compact, $f_k$ converge uniformly by Dini's theorem.
We have already seen above that every $f_k$ is smooth and psh on $W$. 
\end{proof}

Before we use this lemma to prove a monotone regularization theorem in the setting of Chambert-Loir and Ducros for Mumford curves, we recall the definition of these curves. 

\begin{defn}
	A smooth proper curve $X$ of genus $g\geq 1$ over $K$ is called a \emph{Mumford curve} if there is a semistable model $\XS$ such that all irreducible components of the special fibre are rational (cf.~\cite[Theorem 4.4.1]{BerkovichSpectral}).
\end{defn}

\begin{kor} Let $X$ be a smooth proper curve over $K$.
If $\widetilde{K}$ is algebraic over a finite field or $X$ is the projective line or a Mumford curve, then every continuous psh function $f\colon W\to \R$  on an open subset $W$ of $\Xan$ is locally psh-approximable.
More precisely, the sequence of smooth psh functions can be chosen monotonically decreasing.
\end{kor}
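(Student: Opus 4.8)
The plan is to deduce the statement from Lemma~\ref{globally psh-approx} combined with Thuillier's monotone regularization theorem (Proposition~\ref{Prop Net}), after first checking that each of the three hypotheses puts us in the situation where Lemma~\ref{globally psh-approx} applies. Concretely, the first step is to verify that under any of the assumptions ($\widetilde K$ algebraic over a finite field, $X=\mathbb{P}^1$, or $X$ a Mumford curve) the sheaf $\HS_X$ of harmonic functions is a subsheaf of $C^\infty$. This is exactly the role of \cite[Th\'eor\`eme 2.3.21]{Th}: under these conditions a harmonic (i.e.\ lisse with $dd^c=0$) function is locally of the form $-\log\Vert 1\Vert_{\LS}$ for a formal metric, hence a model function, and such functions are smooth in the sense of Chambert--Loir and Ducros by Theorem~\ref{Thm Masse Model fct 2} and Definition~\ref{Def glatt}. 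Thus $\HS_X\subseteq C^\infty$ and Lemma~\ref{globally psh-approx} is at our disposal.

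Next I would fix a point $x\in W$ and produce a neighborhood on which $f$ is a decreasing uniform limit of smooth psh functions. By Corollary~\ref{Kor Equ} the continuous psh function $f$ is subharmonic on $W$. Choose a strictly affinoid domain $Y\subset W$ with $x$ in its interior, together with a smaller strictly affinoid domain $Y_1$ satisfying $x\in Y_1^{\circ}\subset Y_1\subset Y^{\circ}$; the open set $V':=Y_1^{\circ}$ will be the desired neighborhood. Applying Proposition~\ref{Prop Net} with the relatively compact open set $W'=Y^{\circ}$ yields a decreasing net $\langle f_j\rangle$ of lisse subharmonic functions converging pointwise to $f$ on $Y^{\circ}$, and in particular $f_j\ge f$ for all $j$. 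On the \emph{compact} set $Y_1$ this is a monotone decreasing net of continuous functions converging pointwise to the continuous function $f$, so by Dini's theorem (which holds for monotone nets on compacta) the convergence is uniform on $Y_1$. Using that the indexing set is directed, I can then extract a decreasing sequence $g_k:=f_{j_k}$ of lisse subharmonic functions with $\sup_{Y_1}|g_k-f|<\varepsilon_k$, where I set $\varepsilon_k:=2^{-k}$; thus $g_1\ge g_2\ge\cdots\ge f$ and $g_k\to f$ uniformly on $Y_1$.

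The second approximation step applies Lemma~\ref{globally psh-approx} to each $g_k$ on the strictly affinoid domain $Y_1$ (on whose interior $g_k$ is subharmonic and on which $g_k$ is lisse): it produces a monotone decreasing sequence of functions that are smooth and psh on $Y_1^{\circ}$ and converge uniformly to $g_k$ from above, i.e.\ approximants $\ge g_k$. For each $k$ I pick one such approximant $\widetilde h_k$ with $g_k\le\widetilde h_k\le g_k+\varepsilon_k$ on $Y_1$. These need not form a monotone sequence across $k$, so the key device is to shift by summable constants: set $c_k:=\sum_{j>k}\varepsilon_j=2^{-k}$ and $h_k:=\widetilde h_k+c_k$. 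Adding a constant preserves smoothness and pshness, and since $\widetilde h_k\ge g_k\ge g_{k+1}$ while $\widetilde h_{k+1}\le g_{k+1}+\varepsilon_{k+1}$, we get
\[
h_k-h_{k+1}=(\widetilde h_k-\widetilde h_{k+1})+(c_k-c_{k+1})\ge -\varepsilon_{k+1}+\varepsilon_{k+1}=0,
\]
so $(h_k)$ is monotone decreasing; moreover $|h_k-f|\le \varepsilon_k+\sup_{Y_1}|g_k-f|+c_k\to0$, so $h_k\to f$ uniformly on $V'=Y_1^{\circ}$. Hence $f$ is the uniform limit of the monotone decreasing sequence of smooth psh functions $h_k$ on the neighborhood $V'$ of $x$, which proves local psh-approximability with the monotonicity refinement, since $x$ was arbitrary.

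The main obstacle I anticipate is not conceptual but organizational: the statement requires assembling a \emph{single} monotone decreasing sequence of \emph{smooth} psh functions out of two separate approximation schemes, namely Thuillier's regularization (a net of lisse subharmonic functions) and the smoothing of Lemma~\ref{globally psh-approx} (a sequence of smooth psh functions for each fixed lisse function). Merging these requires both the net-version of Dini's theorem to reduce the net to a uniformly convergent decreasing sequence and the constant-shift diagonal trick above to restore monotonicity after smoothing; one cannot simply take pointwise minima, since the minimum of smooth psh functions is neither smooth nor psh. The secondary subtle point, which should be handled up front, is the identification of Thuillier-harmonic functions with Chambert--Loir--Ducros-smooth functions under the three hypotheses, as this is what makes the smoothing input of Lemma~\ref{globally psh-approx} legitimate.
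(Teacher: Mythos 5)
Your overall architecture coincides with the paper's: establish $\HS_X\subseteq C^\infty$, use Corollary~\ref{Kor Equ} to pass from psh to subharmonic, combine Proposition~\ref{Prop Net} with Dini's theorem for nets on a compact strictly affinoid neighborhood to extract a decreasing sequence of lisse subharmonic functions converging uniformly to $f$, and then smooth each member of that sequence via Lemma~\ref{globally psh-approx}. Your final merging step is in fact \emph{more} complete than the paper's: the paper ends with the one-line assertion that a single decreasing sequence of smooth psh functions ``can be chosen'', whereas you actually construct it, and the constant-shift device ($h_k:=\widetilde h_k+2^{-k}$ with $g_k\le\widetilde h_k\le g_k+\varepsilon_k$, using that Lemma~\ref{globally psh-approx} approximates from above) is a correct way to restore monotonicity across the two approximation schemes.

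There is, however, a genuine flaw in your justification of the key input $\HS_X\subseteq C^\infty$. You argue: harmonic $\Rightarrow$ locally a model function $-\log\Vert 1\Vert_{\LS}$ $\Rightarrow$ smooth ``by Theorem~\ref{Thm Masse Model fct 2} and Definition~\ref{Def glatt}''. Neither implication is correct. Theorem~\ref{Thm Masse Model fct 2} only says that a model function is of the form $F\circ\tau_{\XS}$ with $F$ piecewise affine, i.e.\ \emph{lisse}; the paper stresses in Section 2 that lisse and smooth (in the sense of Chambert-Loir and Ducros) are very different notions. Indeed, model functions are in general \emph{not} smooth, even on $\PB_K^1$: for a model function $f$ one has $d'd''[f]=[\mathrm{MA}(f)]$ by Lemma~\ref{Lem Curr MA}, and $\mathrm{MA}(f)=\sum_Y\deg_{\LS}(Y)\,\delta_{\zeta_Y}$ is purely atomic (Theorem~\ref{Thm Masse Model fct}), typically nonzero --- e.g.\ $f=\max(0,\min(-\log|T|,\lambda))$ on $\PB_K^{1,\an}$ with $\lambda\in\val(K^\times)_{>0}$ --- whereas for a smooth function $g$ one has $d'd''[g]=[d'd''g]$ and the Radon measure of the smooth $(1,1)$-form $d'd''g$ is given by smooth densities along the edges of a polyhedral complex, hence has no atoms; by the uniqueness in Proposition~\ref{Prop Radon} such an $f$ cannot be smooth. (The first implication is also inaccurate: harmonic functions may have irrational slopes, so they need not be locally model functions; the cited \cite[Th\'eor\`eme 2.3.21]{Th} gives local representations as $\R$-linear combinations of $\log|h|$ with $h$ invertible analytic.) The fact you need is true, but it must be taken as the paper takes it: the inclusion $\HS_X\subseteq C^\infty$ under the three hypotheses is exactly what \cite[Th\'eor\`eme 2.3.21]{Th} together with \cite[Corollary 5.3.21]{Wa} (plus \cite[Theorem 4.4.1]{BerkovichSpectral} in the Mumford-curve case) provides. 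With that citation in place of your derivation, the remainder of your proof goes through.
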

\begin{proof} 
	At first, note that in the given situation $\HS_X$ is a subsheaf of $C^\infty$ on $\Xan$, which  is a consequence of \cite[Th\'eor\`eme 2.3.21]{Th} (see \cite[Corollary 5.3.21]{Wa}).	
	We additionally use \cite[Theorem 4.4.1]{BerkovichSpectral} in the case of a Mumford curve.	
	Hence, we may apply Lemma \ref{globally psh-approx}.
	
To prove the corollary,	we have to show that every point $x$ in $W$ has an open neighborhood in $W$ such that $f$ is a uniform limit of smooth psh functions.
The continuous psh function $f$ is subharmonic by Theorem \ref{folgt subh}, and  we therefore can use Thuillier's monotone regularization theorem (see Proposition \ref{Prop Net}).  
We can  find for every  $x\in W $ a relatively compact neighborhood $W'$ of $x$ in $W$ and a decreasing net $\langle f_j\rangle$ of lisse subharmonic functions on $W'$ converging pointwise to $f$.
Let $Y$ be a strictly affinoid domain in $W'$ having $x$ in its interior $Y^\circ$. 
Then the decreasing net $\langle f_j\rangle$ converges uniformly to the continuous function $f$ on the compact set $Y$ by Dini's theorem. 
Thus, one can construct inductively a decreasing sequence of lisse subharmonic functions on $W'$ converging uniformly to $f$ on $Y$ and we write $(f_k)_{k\in \N}$ for this sequence.

We have seen in Lemma \ref{globally psh-approx} that each $f_k$ is the uniform limit of a decreasing sequence of smooth psh functions on $Y^\circ$.
Hence, we can choose a decreasing sequence of smooth psh functions on $Y^\circ$ converging uniformly to $f$.
\end{proof}

\begin{bem}
	Note that there are curves such that the sheaf $\HS_X$ of harmonic functions is not a subsheaf of the sheaf $C^\infty$ of smooth functions on $\Xan$. A counter example of such a curve can be constructed by the proof of \cite[Th\'eor\`eme 2.3.21]{Th} (see for example \cite[Corollary 5.3.23]{Wa}). For those curves we do not know whether every psh function is locally psh-approachable.
\end{bem}

\bibliographystyle{alpha}
\def\cprime{$'$}

\end{document}